\documentclass[a4paper, 10pt, twoside]{amsart}
\usepackage[utf8]{inputenc}
\usepackage{exscale, fullpage,url}
\usepackage[centertags]{amsmath}
\usepackage{color}
\usepackage{amssymb}
\usepackage{amsthm}
\usepackage{dsfont}
\usepackage[all]{xy}
\usepackage{mathrsfs}
\usepackage{upgreek}
\usepackage{comment}
\usepackage{tikz-cd}
\usepackage[hidelinks]{hyperref}
\usepackage{graphicx,fp}
\usepackage{mathtools}
\usepackage{paralist}
\usepackage{calc}
\usepackage{diagbox}
\usepackage{enumitem}
\usepackage{tikz}
\usetikzlibrary{arrows.meta,calc,patterns}

\usepackage[capitalise, noabbrev]{cleveref}
\crefname{enumi}{}{}
\usepackage{stmaryrd}

\usepackage{csquotes}
\usepackage[color=orange!10]{todonotes}

\numberwithin{equation}{section}

\usepackage{thmtools}

\newtheorem{thm}{Theorem}[section]

\newcommand{\myendsymbol}{\ensuremath{\diamondsuit}}

\declaretheorem[
  style=plain,
  title=Lemma,
  qed={},
  sharenumber=thm,
]{lem}

\declaretheorem[
  style=plain,
  title=Proposition,
  qed={},
  sharenumber=thm,
]{prop}

\declaretheorem[
  style=plain,
  title=Corollary,
  qed={},
  sharenumber=thm,
]{cor}

\declaretheorem[
  style=plain,
  title=Corollary,
  qed={\qedsymbol},
  sharenumber=thm,
]{cornoproof}

\declaretheorem[
  style=definition,
  title=Remark,
  qed={$\myendsymbol$},
  sharenumber=thm,
]{rmk}

\declaretheorem[
  style=plain,
  title=Notation,
  qed={$\myendsymbol$},
  sharenumber=thm,
]{nota}

\declaretheorem[
  style=definition,
  title=Definition,
  qed={$\myendsymbol$},
  sharenumber=thm,
]{dfn}

\declaretheorem[
  style=definition,
  title=Remark,
  qed={$\myendsymbol$},
  sharenumber=thm,
]{rem}

\newcommand{\dC}{{\mathds C}}
\newcommand{\dQ}{{\mathds Q}}
\newcommand{\dN}{{\mathds N}}

\newcommand{\dZ}{{\mathds Z}}
\newcommand{\dP}{{\mathds P}}

\newcommand{\dL}{{\mathbb L}}
\newcommand{\dA}{{\mathbb A}}

\newcommand{\cA}{\mathcal{A}}

\newcommand{\cC}{\mathcal{C}}
\newcommand{\cD}{\mathcal{D}}
\newcommand{\cE}{\mathcal{E}}
\newcommand{\cF}{\mathcal{F}}

\newcommand{\cH}{\mathcal{H}}

\newcommand{\cM}{\mathcal{M}}
\newcommand{\cN}{\mathcal{N}}
\newcommand{\cO}{\mathcal{O}}

\newcommand{\cR}{\mathcal{R}}
\newcommand{\cS}{\mathcal{S}}

\newcommand{\cU}{\mathcal{U}}

\newcommand{\fa}{\mathfrak{a}}

\newcommand{\fg}{\mathfrak{g}}

\newcommand{\fp}{\mathfrak{p}}
\newcommand{\fq}{\mathfrak{q}}

\newcommand{\fu}{\mathfrak{u}}

\newcommand{\D}{\displaystyle}

\newcommand{\SC}{\scriptstyle}

\DeclareMathOperator{\GL}{\textup{GL}}

\DeclareMathOperator{\act}{\textup{act}}

\DeclareMathOperator{\Gr}{Gr}

\DeclareMathOperator{\gr}{\textup{Gr}}

\DeclareMathOperator{\coker}{\textup{coker}}

\newcommand{\MHM}{\textup{MHM}}

\newcommand{\mbc}{\mathds{C}}

\newcommand{\mbp}{\mathds{P}}
\newcommand{\mbq}{\mathds{Q}}
\newcommand{\mbqH}{\mathds{Q}^{\textup{H}}}

\newcommand{\mcf}{\mathcal{F}}

\newcommand{\mch}{\mathcal{H}}
\newcommand{\mchp}{\mathcal{H}}

\newcommand{\mco}{\mathcal{O}}

\newcommand{\ra}{\rightarrow}
\newcommand{\lra}{\longrightarrow}

\newcommand{\KH}{\mathit{KH}}
\newcommand{\PH}{\mathit{PH}}
\newcommand{\CC}{{\mathbb{C}}}
\newsavebox\foobox

\newcommand{\suchthat}{\;\ifnum\currentgrouptype=16 \middle\fi|\;}

\DeclareMathOperator{\codim}{codim}

\DeclareMathOperator{\trace}{trace}

\DeclareMathOperator{\rk}{rk}

\DeclareMathOperator{\sheafHom}{\mathscr{H}\kern -3pt\textit{om}\kern 1pt}
\DeclareMathOperator{\sheafTor}{\mathscr{T}\kern -3pt\textit{or}\kern 1pt}
\DeclareMathOperator{\sheafExt}{\mathscr{E}\kern -2pt\textit{xt}\kern 1pt}
\DeclareMathOperator{\sheafDer}{\mathscr{D}\kern -1pt\textit{er}\kern 1pt}
\DeclareMathOperator{\differential}{d\!}

\DeclareMathOperator{\Ad}{Ad}

\newcommand{\N}{\mathds{N}}

\renewcommand{\P}{\mathds{P}}
\newcommand{\R}{\mathds{R}}
\newcommand{\C}{\mathds{C}}
\newcommand{\Q}{\mathds{Q}}
\newcommand{\Z}{\mathds{Z}}

\renewcommand{\O}{\mathcal{O}}
\newcommand{\Ell}{\mathscr{L}}

\let\originalleft\left
\let\originalright\right
\renewcommand{\left}{\mathopen{}\mathclose\bgroup\originalleft}
\renewcommand{\right}{\aftergroup\egroup\originalright}

\newcommand\restr[2]{{#1_{| {#2}}}}

\begin{document}

\title{Tautological systems  and local cohomology}

\author[P.~G\"orlach]{Paul G\"orlach}
\address{\linebreak
P.~G\"orlach\\
Otto-von-Guericke-Universität Magdeburg\\
Fakult\"at f\"ur Mathematik\\
Institut f\"ur Algebra und Geometrie\\
Universitätsplatz 2\\
39106 Magdeburg\\
Germany}
\email{\href{mailto:paul.goerlach@ovgu.de}{paul.goerlach@ovgu.de}}

\author[T.~Reichelt]{Thomas Reichelt}
\address{\linebreak
T.~Reichelt\\
Lehrstuhl f\"ur Mathematik VI \\
Institut f\"ur Mathematik\\
Universit\"at Mannheim,
A 5, 6 \\
68131 Mannheim\\
Germany}
\email{\href{mailto:thomas.reichelt@math.uni-mannheim.de}{thomas.reichelt@math.uni-mannheim.de}}

\author[C.~Sevenheck]{Christian Sevenheck}
\address{\linebreak
C.~Sevenheck\\
Fakult\"at f\"ur Mathematik\\
Technische Universit\"at Chemnitz\\
09107 Chemnitz\\
Germany}
\email{\href{mailto:christian.sevenheck@mathematik.tu-chemnitz.de}{christian.sevenheck@mathematik.tu-chemnitz.de}}

\author[U.~Walther]{Uli Walther}
\address{\linebreak
  U.~Walther\\
  Department of Mathematics\\
  Purdue University\\
  West Lafayette, IN 47907\\
  USA
}
\email{\href{mailto:walther@purdue.edu}{walther@purdue.edu}}

\thanks{PG and CS were partially supported by DFG grant SE 1114/6-2.
 UW was supported in part by NSF grant DMS-2100288 and by
 Simons Foundation Collaboration Grant for Mathematicians \#580839 and SFI-MPS-TSM-00012928.}

\subjclass[2020]{32C38, 14F10, 14B15}

\keywords{Tautological system, local cohomology, mixed Hodge module, weight filtration, Lie group, homogeneous space}

\begin{abstract}
  \noindent  We discuss the connections between tautological systems and  the local cohomology of cones over homogeneous spaces. We study a derived version of tautological systems, related to the Chevalley--Eilenberg complex, and show that in many cases it underlies a complex of mixed Hodge modules.
\end{abstract}

\maketitle

\tableofcontents

\section{Introduction} \label{sec:intro}

The purpose of this paper is to investigate the relationship between the local cohomology modules of cones over smooth projective varieties, and certain equivariant  differential systems in the case where these smooth varieties are homogeneous spaces. This continues our study of such $\cD$-modules in \cite{GRSSW} and \cite{GS-Duality}.

Let
\[
\rho:G\rightarrow \GL(V)
\]
be a representation of a connected linear algebraic group $G$, and choose a $G$-variety
\[
\overline{Y}\subseteq V,
\]
which typically will be the closure of a $G$-orbit $Y\subseteq V$. For a  Lie algebra homomorphism
\[
\beta:\fg\rightarrow \dC,
\]
we have studied in \cite{GRSSW} the \emph{tautological system} $\tau(\rho, \overline{Y}, \beta)$, an equivariant $\cD$-module on the dual space $V^\vee$ that arises as a Fourier--Laplace transform of a module $\hat{\tau}(\rho, \overline{Y}, \beta)\in\textup{Mod}(\cD_V)$. Later,  in \cite{GS-Duality} we investigated the duality theory of these systems.

It is frequently convenient to express $\hat{\tau}(\rho, \overline{Y}, \beta)$ as the terminal cohomology of a complex of $\cD_V$-modules that generalizes both the Chevalley--Eilenberg complex in Lie theory and the Euler--Koszul complex in the theory of hypergeometric $\cD$-modules. In the present paper, we study the relationhip of this complex with local cohomology modules, specifically for the case where $\overline{Y}$ is the affine cone over a projective homogeneous space.

\medskip

In order to formulate our main result, let us set up some notation. We concentrate here on the case we are mostly interested in (that is, cones over projective homogeneous spaces), some of our results in the main text are valid in a more general context.

Let
\[
X\coloneqq G_0/P_0,
\]
where $G_0$ is a connected linear algebraic group, and $P_0$ is a parabolic subgroup. Let $\mathscr{L}$ be a very ample $G_0$-equivariant line bundle on $X$, and consider the embedding $X\hookrightarrow \dP V$ given by $|\mathscr{L}|$. Write $\hat{X}\subseteq V$ for the affine cone of $X$. Then $\hat{X}$ is a $G$-space, where
\[
\rho\colon G=\dC^*\times G_0\longrightarrow\GL(V)
\]
acts naturally on $V$ (with $\dC^*$ acting via the usual scaling). Let
\[
\fg\coloneqq\dC\oplus \fg_0
\]
be the Lie algebra of $G$ and let $\cU(\fg)$
be its universal enveloping algebra. Denote by
\[
Z_V \colon \fg\rightarrow \Theta_V
\]
the natural map given by the derivative of the action $\rho$ and write
$\cA_V \coloneqq \O_V \otimes \cU(\mathfrak g)$ for the associative $\C$-algebra with multiplication determined by
\[
\xi \cdot f = f \cdot \xi + Z_V(\xi)(f) \qquad \text{for $f \in \O_V$, $\xi \in \mathfrak g$}.
\]
The map $Z_V$ extends to a homomorphism $\cA_V\rightarrow\cD_V$ of $\C$-algebras.
In this setup, we consider the following object.
\begin{dfn}
Let $\beta:\fg\rightarrow \dC$ be a Lie algebra homomorphism. The \emph{Chevalley--Eilenberg--Euler--Koszul complex} associated to $X$, $\Ell$ and $\beta$ is
\[
  \hat T(\rho, \hat{X}, \beta) \coloneqq
  \cD_V\otimes^\dL_{\cA_V} i_*\cO_{\hat{X}}\{\beta'\} \in D^b(\mathcal{D}_V),
\]
where $i \colon \hat X \to V$ is the inclusion and
\[
i_*\cO_{\hat{X}}\{\beta'\} \coloneqq i_*\cO_{\hat{X}} \otimes_{\cO_V} \cA_V/\cA_V(\xi-(\trace\circ \differential\rho-\beta)(\xi) \mid \xi \in \mathfrak g).
\]
Notice that there is an explicit presentation of $\hat T(\rho,\hat X,\beta)$ with a familiar differential from Lie theory (see, e.g.\ the discussion in \cite[Section 2.1]{GS-Duality}).
\end{dfn}

We present in this article a functorial interpretation of the Chevalley--Eilenberg--Euler--Koszul complex, some Hodge theoretic consequences,  and a relation to the local cohomology of the cone $\hat{X}$. We summarize some of these findings as follows.  Denote by
\[
\iota\colon \hat{X}\backslash\{0\} \hookrightarrow \hat X\hookrightarrow V
\]
the canonical locally closed embedding. Moreover, we denote, for any complex algebraic variety $Z$, by $a\colon Z\rightarrow \{pt\}$ the structure map of $Z$ and we write
$d_{Z}\coloneqq \dim_\CC(Z)$. We also refer to the summary of notations on algebraic $\cD$-modules and Hodge modules in our previous paper \cite[page 525-526]{GRSSW}.
Then the following holds.
\begin{thm}[{see \cref{theo:ColocFunct}, \cref{cor:ThatHodge} and \cref{cor:weightLocCohom}}]\label{thm:MainTheo}
If $d_X>0$ and  $\hat{X}$ is Gorenstein, then we have isomorphisms in $D^b_h(\cD_V)$:
\[
\hat{T}(\rho,\hat{X},\gamma) \cong
\iota_\dag \cO_Y \otimes_\dC a_\dag \cO_{P_0}[d_{P_0}]
\qquad\textup{and}\qquad
\hat{T}(\rho,\hat{X},0)  \cong \iota_+ \cO_Y \otimes_\C a_+ \cO_{P_0},
\]
where $\gamma$ is the Lie algebra homomorphism defined in \cref{rem:GorensteinGamma} below. Consequently, $\hat{T}(\rho,\hat{X},\gamma)$ and $\hat{T}(\rho,\hat{X},0)$ underlie objects in $D^b\MHM(V)$.

The cohomology modules of the Chevalley--Eilenberg--Euler--Koszul complex can be expressed in terms of the local cohomology along $\hat{X}$ and the singular cohomology of $P_0$ as
\[
H^{-k} \hat T(\rho, \hat X, 0) \cong \left(H_{\{0\}}^{d_V} \O_V \otimes_\C H^{d_{P_0}-d_{\hat X}+1-k}(P_0,\C)\right) \oplus
\bigoplus_{i+j=d_{P_0}-k} H^{d_V-d_{\hat X}+i}_{\hat X} \O_V \otimes_\C H^j(P_0,\C).
\]

Moreover,
the Hodge module corresponding to the first local cohomology group $H^{d_V-d_{\hat{X}}}_{\hat{X}}\cO_V$ has weights in $\{2d_V-d_{\hat{X}},\ 2d_V-d_{\hat{X}}+1\}$, and the (Hodge modules of the) higher local cohomology groups $H^{d_V-d_{\hat{X}}+i}_{\hat{X}}\cO_V$ are all pure of weight $2d_V-d_{\hat{X}}+i+1$ for $i\in\{1,\ldots,d_{\hat{X}}-2\}$.
\end{thm}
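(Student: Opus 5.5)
The first step is to replace the Chevalley--Eilenberg--Euler--Koszul complex by a geometric pushforward. Here $Y=\hat X\setminus\{0\}$ is a single $G$-orbit, so $Y\cong G/H$, where $H$ is the stabilizer of a point $y_0$ in the cone over the base point of $X$; $H$ fits into an extension involving $P_0$ and the character by which $P_0$ acts on the fibre of $\Ell$.

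Let $\mu\colon G\to V$, $g\mapsto g\cdot y_0$. I would show that $\cD_V\otimes^\dL_{\cA_V}(-)$, applied to $i_*\cO_{\hat X}\{\beta'\}$, computes the direct image $\mu_+$ (or $\mu_\dag$) of a rank-one equivariant module $\cO_G^{\beta}$. The idea is the standard comparison: the Chevalley--Eilenberg resolution of $\cO_G$ relative to the left-invariant fields is pushed forward along the orbit map. Concretely, the de Rham complex of $\mu$ along $G$ becomes the Lie algebra complex $\cD_V\otimes^\dL_{\cU(\fg)}(\cdot)$.

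This comparison requires the orbit map to see all of $\cO_{\hat X}$, not just $\cO_Y$. Normality of the cone, together with the Gorenstein hypothesis and $d_X>0$, gives depth at least $2$ at the vertex. Hence $i_*\cO_{\hat X}=\iota_*\cO_Y$ has no $H^0_{\{0\}}$ or $H^1_{\{0\}}$ contribution. The Gorenstein condition is exactly what makes $\omega_{\hat X}$ a twist of $\cO_{\hat X}$ by the character $\gamma$ of \cref{rem:GorensteinGamma}. This is why $\beta=\gamma$ gives the dual statement: $\mathbb D\hat T(\rho,\hat X,0)$ is, up to shift, $\hat T(\rho,\hat X,\gamma)$ (cf.\ the duality results of \cite{GS-Duality}).

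Next, $\mu$ factors as $G\to G/H\cong Y\xrightarrow{\iota}V$, and $G\to G/H$ is a locally trivial fibration with fibre $H$. The fibre is homotopy equivalent to $P_0$, since the extra $\dC^*$ factor is absorbed into the cone direction. Applying the projection formula for $\beta=0$ gives
\[
\mu_+\cO_G\cong\iota_+\bigl(\cO_Y\otimes a_+\cO_{P_0}\bigr).
\]
The local system $H^\bullet(P_0)$ is trivial because $G$ is connected and the fibration is $G$-equivariant, so this equals $\iota_+\cO_Y\otimes_\dC a_+\cO_{P_0}$. Dually, using $\mu_\dag$ and the twist $\gamma$, one gets the first isomorphism with the shift $[d_{P_0}]$. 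Since $\iota_+,\iota_\dag,a_+,a_\dag$ all lift to $D^b\MHM$, the MHM statement follows.

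For the cohomology formula I would use the triangle
\[
i_*\Gamma_{\{0\}}\to R\Gamma_{\hat X}\cO_V\to \iota_+\cO_Y.
\]
Since $\hat X\setminus 0$ is smooth, $R\Gamma_{\hat X}$ has cohomology in degrees $d_V-d_{\hat X}+i$ for $0\le i\le d_{\hat X}-1$, and $\Gamma_{\{0\}}\cO_V=H^{d_V}_{\{0\}}\cO_V[-d_V]$. The connecting map contributes the shifted $H^{d_V}_{\{0\}}$ term. The Gorenstein (hence Cohen--Macaulay) condition kills $H^{d_V}_{\hat X}$, so that term appears only through $\iota_+$, in degree $d_V-1$ relative. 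Tensoring with $H^\bullet(P_0,\dC)$ and re-indexing with $a_+\cO_{P_0}$ concentrated in degrees $[-d_{P_0},d_{P_0}]$ gives the displayed formula. The main bookkeeping risk is pinning down the shift conventions, which I would check for $X=\dP^1$.

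For weights, $\iota_+\cO_Y$ has weights $\ge$ the weight of $\cO_Y[d_Y]$ in degree $j$, namely $\ge d_Y+j$ after the appropriate normalization. The Gorenstein condition together with the duality above makes $\iota_+$ and $\iota_\dag$ symmetric. Via the triangle, $H^k_{\hat X}$ for $k$ strictly between the endpoints agrees with $\mathcal H^k\iota_+\cO_Y$. Its weights can be computed by decomposition: the $\dC^*$-bundle $Y\to X$ is the complement of the zero section in $\Ell^{-1}$. Its Leray spectral sequence degenerates by hard Lefschetz, which gives purity of the middle groups. The first group receives the weight-$(2d_V-d_{\hat X})$ part from the intermediate extension plus one more weight from the primitive cohomology of $X$.

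I expect the main obstacle to be this weight computation, specifically showing that the Gysin/Lefschetz pieces land in exactly one weight each. I would first try to verify it via the blow-up of the vertex, using the decomposition theorem for the line-bundle total space.
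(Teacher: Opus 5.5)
\textbf{The gap: the behaviour at the vertex.} The genuine gap is in your first step, the behaviour of $\hat T(\rho,\hat X,\beta)$ at the vertex, which is where the paper's real work lies.

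Depth $\geq 2$ only gives $H^0_{\mathfrak m}(S)=H^1_{\mathfrak m}(S)=0$ for $S=\Gamma(V,\cO_{\hat X})$, i.e.\ $i_*\cO_{\hat X}=\iota_*\cO_Y$ for the \emph{underived} pushforward. It says nothing about $\cD_V\otimes^{\mathbb L}_{\cA_V}(\cdot)$ near $0$; for a Cohen--Macaulay cone, $R^{d_{\hat X}-1}\iota_*\cO_Y\cong H^{d_{\hat X}}_{\mathfrak m}(S)\neq 0$.

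More decisively, the $\cO_V$-module underlying $i_*\cO_{\hat X}\{\beta'\}$ does not depend on $\beta$. A depth argument would therefore treat all $\beta$ alike, whereas the answer depends on $\beta$:
\begin{itemize}
\item $\hat T(\rho,\hat X,\gamma)$ is \emph{not} localized: it lives in degrees $\leq 0$, while $\iota_+\cO_Y\otimes_\C H_{-\bullet}(\fp,\C)$ has non-zero cohomology in degree $d_{\hat X}-1>0$.
\item $\hat T(\rho,\hat X,0)$ is \emph{not} colocalized: its restriction to the vertex is $H_{-\bullet}(G,\C)\neq 0$.
\end{itemize}

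The missing idea is a $\beta$-sensitive computation at $0$. By homogeneity,
$i_0^\dag\hat T[d_V]\cong a_{V,+}\hat T\cong R\Gamma(V,\omega_V\otimes^{\mathbb L}_{\cA_V}\cO_{\hat X}\{\beta'\})\cong H_{-\bullet}(\fg,S\otimes\C_{-\beta})$.
By Borel--Weil every $S_d$ with $d>0$ is a simple $\fg$-module of dimension $>1$. Also $S_0\otimes\C_{-\beta}=\C_{-\beta}$ is non-trivial iff $\beta\neq0$. Since $\fg$ is reductive, this Lie algebra homology therefore vanishes for $\beta\neq 0$. That gives colocalization of $\hat T(\rho,\hat X,\gamma)$. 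Localization of $\hat T(\rho,\hat X,0)$ then follows from the Gorenstein duality $\mathbb D\hat T(\rho,\hat X,0)\cong\hat T(\rho,\hat X,\gamma)[d_{\hat X}-d_G]$. So the paper runs in the opposite order to yours, and its key input is representation theory, not depth.

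\textbf{Further steps asserted rather than proved.} On the orbit, you do not explain how $\cO_{\hat X}$ enters the comparison with $\mu_+$, i.e.\ functions on the base $Y=G/P$ rather than $\mu_*\cO_G$. Pushing $\cO_G$ forward along $G\to Y$ produces the de Rham cohomology of the fibre, which is Lie algebra cohomology of $\fp$ with coefficients in $\C[P]$. By contrast, $\cD_Y\otimes^{\mathbb L}_{\cA_Y}\cO_Y\cong\cO_Y\otimes_\C H_{-\bullet}(\fp,\C)$ involves trivial coefficients. Matching the two amounts to $H_\bullet(\fp,\C)\cong H_\bullet(P,\C)$ for the non-reductive parabolic $\fp$. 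The paper proves this via the Levi decomposition, Hochschild--Serre, and the fact that $H_\ell(\fu,\C)$ for $\ell>0$ has only non-zero $\mathfrak h$-weights. Triviality of the fibre cohomology also needs the stabilizer $P\cong P_0$ to be connected; connectedness of $G$ alone does not suffice.

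In the local cohomology step, $H^{d_V}_{\hat X}\cO_V=0$ is the Hartshorne--Lichtenbaum theorem, which needs only $\dim\hat X>0$. Cohen--Macaulayness has no bearing on cohomological dimension. The paper also uses the second vanishing theorem, $H^{d_V-1}_{\hat X}\cO_V=0$, to identify $H^{d_{\hat X}-1}\iota_+\cO_Y$ with $H^{d_V}_{\{0\}}\cO_V$.

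Your outline for the weights does follow the paper's route and uses no Gorenstein hypothesis: the total space of $L^\vee$ mapping onto $\hat X$, the decomposition theorem, the Thom--Gysin sequence with cup product by $c_1$, and hard Lefschetz.
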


\medskip

A short overview on the different parts of this paper follows. In  \cref{sec:LocCohom} we discuss quite generally the local cohomology modules of cones over an arbitrary smooth projective varieties $X$. We relate them to the cohomology of $X$, and use this to determine their weight filtrations. In \cref{sec:Coloc}, turning to an equivariant setting, we recall the Chevalley--Eilenberg--Euler--Koszul complex and generalize in \cref{prop:FunctConstrHatT},  for cones over homogeneous spaces, the colocalization property  proved originally in \cite{GRSSW} for the terminal cohomology module only. Finally, in \cref{sec:Orbit} we study the restriction of this complex to the open orbit (e.g.\ the complement of the origin of the affine cone over the given projective homogeneous space), and as a consequence, obtain the identification mentioned in our main theorem above as well as its Hodge theoretic consequences (see \cref{theo:ColocFunct} and \cref{cor:ThatHodge}). As a  further consequence, using also the results from \cref{sec:LocCohom}, we give the interpretation  mentioned above of the Chevalley--Eilenberg--Euler--Koszul complex in terms of the local cohomology groups of the affine cone (see \cref{cor-lcdef}).

\section{Local cohomology of cones and weight filtration}
\label{sec:LocCohom}

As a preliminary consideration, we develop in this section a description of the local cohomology groups of cones. To this end, let $X$ be a connected smooth projective variety of positive dimension, and fix a very ample line bundle $L$ on $X$. Then we have an embedding  $X \hookrightarrow \mbp(V)$ where $V = H^0(X,L)^\vee$. Let $\hat{X}\subseteq V$ be the affine cone of $X$ in $V$ and denote by $Y\coloneqq \hat{X} \setminus \{0\}$ the punctured cone.

Consider the maps $Y\overset{\overline{j}} \hookrightarrow \hat{X} \overset{i_0}\hookleftarrow \{0\}$. There is the following adjunction triangle in $D^b\MHM(\hat{X})$
\[
i_{0!} \mbqH_{\{0\}} \lra \omega_{\hat{X}} \lra \overline{j}_* \omega_{Y} \overset{+1}\lra
\]
Let $k: \hat{X} \ra V$ be the canonical closed embedding and define $\iota_0 \coloneqq (k \circ i_0): \{0\} \ra V$ as well as $\iota = (k \circ \overline{j}) : Y \ra V$. Applying $k_!= k_*$ to the triangle above and using $\omega_{\hat{X}} = k^! \omega_{V}$ we get the following triangle in $D^b\MHM(V)$
\[
\iota_{0!} \mbqH_{\{0\}} \lra k_! k^! \omega_{V} \lra \iota_* \omega_{Y} \overset{+1}\lra
\]
Since $V$ and $Y$ are smooth,
\[
\omega_V = \mbqH_V(d_V)[2d_V] \qquad \text{and} \qquad \omega_Y = \mbqH_{Y}(d_Y)[2d_{Y}],
\]
and we obtain the triangle
\begin{equation}\label{eq:TopSeqWithTate}
\iota_{0!} \mbqH_{\{0\}} \lra k_! k^! \mbqH_V(d_V)[2d_V] \lra \iota_* \mbqH_Y(d_Y)[2d_{Y}] \overset{+1}\lra.
\end{equation}
The underlying complexes of $\cD$-modules fit into the following triangle
\[
\iota_{0+} \mco_{\{0\}} \lra R\Gamma_{\hat{X}}\mco_V[d_V] \lra \iota_+ \mco_Y[d_Y] \overset{+1}\lra
\]
with corresponding long exact cohomology sequence
\[
\ldots \lra H^\ell(\iota_{0+} \mco_{\{0\}}) \lra H^{\ell+d_V}_{\widehat{X}} \mco_V \lra H^{\ell+d_Y} \iota_+ \mco_Y \lra \ldots
\]
Since $H^\ell(\iota_{0+} \mco_{\{0\}}) = 0$ for $\ell\neq 0$ we get the isomorphisms
\[
H^k \iota_+ \mco_Y \simeq H^{k+d_V-d_Y}_{\widehat{X}} \mco_V \qquad \text{for}\quad k \neq d_Y-1, d_Y
\]
as well as the following exact sequence of $\mathcal{D}_V$-modules
\[
0 \lra H^{d_V-1}_{\widehat{X}} \mco_V  \lra H^{d_Y-1} \iota_+ \mco_Y \lra H^0(\iota_{0+} \mco_{\{0\}}) \lra H^{d_V}_{\widehat{X}} \mco_V \lra H^{d_Y} \iota_+ \mco_Y \lra 0.
\]

We have $H^{d_V}_{\widehat{X}} \mco_V = 0$ by the Hartshorne--Lichtenbaum Theorem \cite{Hartshorne-CDAV,PeskineSzpiro}, and, since $\hat X$ is irreducible of dimension 2 or more, $H^{d_V-1}_{\widehat{X}} \mco_V = 0$ by  the Second Vanishing Theorem, due in various shapes to Hartshorne, Ogus, Peskine--Szpiro, Huneke--Lyubeznik, Zhang, and Batavia \cite{Hartshorne-CDAV, Ogus-LCDAV, PeskineSzpiro,HunekeLyubeznik,Zhang-second, Batavia}. It follows that
\[
 H^{d_Y-1} \iota_+ \mco_Y \simeq H^0(\iota_{0+} \mco_{\{0\}}) \simeq H^{d_V}_{\{0\}} \mco_V \qquad \text{and} \qquad H^{d_Y} \iota_+ \mco_Y = 0.
\]
Since $\iota_+$ is left exact we finally obtain the following result.
\begin{prop}\label{prop:LocCohomCone}
Under the above assumptions, we have isomorphisms in $\textup{Mod}_h(\mathcal{D}_V)$\[
H^k \iota_+ \mco_Y \simeq
\begin{cases}
H^{k+d_V-d_Y}_{\widehat{X}} \mco_V & \textup{for } k \in [0, d_Y-2];\\
H^{d_V}_{\{0\}} \mco_V & \textup{for } k =d_Y-1;\\ 0 & \textup{otherwise}.
\end{cases}
\]
\end{prop}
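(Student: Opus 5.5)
The plan is to assemble the statement directly from the distinguished triangle
\[
\iota_{0+} \mco_{\{0\}} \lra R\Gamma_{\hat{X}}\mco_V[d_V] \lra \iota_+ \mco_Y[d_Y] \overset{+1}\lra,
\]
which is the image under the de Rham functor of \eqref{eq:TopSeqWithTate}. It arises from the adjunction triangle for $Y \subseteq \hat X \supseteq \{0\}$ together with $k_!k^!\omega_V \leftrightarrow R\Gamma_{\hat X}\mco_V[d_V]$. I will take its long exact cohomology sequence and sort the degrees into three ranges.

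\textbf{Generic degrees.} The first term $\iota_{0+}\mco_{\{0\}}$ is a single module, $H^{d_V}_{\{0\}}\mco_V$, placed in degree $0$. Hence for every $\ell\neq 0,-1$ the connecting pieces vanish and we get
\[
H^{\ell+d_Y}\iota_+\mco_Y \cong H^{\ell+d_V}_{\hat X}\mco_V .
\]
Putting $k=\ell+d_Y$ gives the first line of the statement for all $k\neq d_Y-1,d_Y$.

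\textbf{The two critical degrees.} These are $k=d_Y-1$ and $k=d_Y$, and they are governed by the five-term exact sequence
\[
0\to H^{d_V-1}_{\hat X}\mco_V\to H^{d_Y-1}\iota_+\mco_Y\to H^{d_V}_{\{0\}}\mco_V\to H^{d_V}_{\hat X}\mco_V\to H^{d_Y}\iota_+\mco_Y\to 0 .
\]
The main input is the vanishing of the two outer local cohomology groups.
\begin{itemize}
\item $H^{d_V}_{\hat X}\mco_V=0$ by Hartshorne--Lichtenbaum, since $\hat X$ is a proper subvariety of positive dimension.
\item $H^{d_V-1}_{\hat X}\mco_V=0$ by the Second Vanishing Theorem. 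This needs $\hat X$ to be irreducible of dimension at least $2$ (this holds because $X$ is connected of positive dimension) and the punctured spectrum at the origin to be connected. For a cone, that connectedness holds because $X$ is connected, which is the formal-connectedness hypothesis in the Huneke--Lyubeznik / Peskine--Szpiro formulation at the vertex.
\end{itemize}
Checking these hypotheses is the one step needing care. With both vanishings in hand, $H^{d_Y-1}\iota_+\mco_Y\cong H^{d_V}_{\{0\}}\mco_V$ and $H^{d_Y}\iota_+\mco_Y=0$.

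\textbf{Vanishing outside $[0,d_Y-1]$.} For $k>d_Y$ there are two ways to see $H^k\iota_+\mco_Y=0$. One is that $\iota_+$ of a module on a smooth $d_Y$-dimensional affine-like space has cohomology in degrees $[0,d_Y]$. The other uses the generic isomorphism: $H^{k+d_V-d_Y}_{\hat X}\mco_V$ then sits in degree greater than $d_V$, so it vanishes. For $k<0$, $\iota_+=k_+\overline j_+$ is left exact: $k_+$ is exact and $\overline j_+$ of a module starts in degree $0$. Alternatively, $H^{<d_V-d_Y}_{\hat X}\mco_V=0$ because $\hat X$ has codimension $d_V-d_Y$. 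Together with the generic degrees this gives $k\in[0,d_Y-2]$ for the first case of the statement.

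Finally, all the modules involved are holonomic and all the maps are $\cD_V$-linear, so the isomorphisms live in $\textup{Mod}_h(\cD_V)$.
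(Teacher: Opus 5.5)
Your proposal is correct and follows the paper's own argument: take the long exact sequence of $\iota_{0+}\mco_{\{0\}}\to R\Gamma_{\hat X}\mco_V[d_V]\to\iota_+\mco_Y[d_Y]\to$, kill the outer terms of the five-term sequence with Hartshorne--Lichtenbaum and the Second Vanishing Theorem, and handle the remaining degrees by vanishing; for $k>d_Y$ your Grothendieck-vanishing argument is the clean one, so the vague ``affine-like'' bound can be dropped. For $k<0$, left exactness of $\iota_+$ is better seen by factoring $\iota$ as $Y\hookrightarrow V\setminus\{0\}\hookrightarrow V$ (an exact closed-embedding pushforward followed by $Rj_*$) rather than through the singular $\hat X$, though your codimension argument works equally well.
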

Notice that due to the Tate twists occuring in
the sequence \eqref{eq:TopSeqWithTate},
the weight filtrations on the two sides of the  isomorphism of Hodge modules corresponding to  the previous proposition are shifted by $2(d_V-d_Y)$.
In the remainder of this section, we will study the cohomology of $\iota_+ \cO_Y$ and its dual $\iota_\dag \cO_Y$ from a topological point of view. In particular, we will determine their weight filtrations. Most interesting is of course the 0-th cohomology modules of both complexes, since one corresponds, by \cref{prop:LocCohomCone}, to the local cohomology group of $\hat{X}$ of smallest degree and the other to the dual thereof. Notice that the recent paper \cite{SabbahGarciaLopez} (see, especially Example 1 in loc.cit.) contains related results, also incorporating the Hodge filtration, by studying the modules $H^k_{\{0\}}(H^\ell_{\hat{X}}\cO_V)$.

\medskip

To start with, consider the dual bundle $L^\vee$ with projection $p^\vee:L^\vee\ra X$ and let
\[
L^{\vee, \ast} \coloneqq L^\vee \setminus \{ \text{zero section}\}.
\]
We have the following diagram with Cartesian squares
\[
\begin{tikzcd}[column sep=huge, row sep=large]
L^{\vee,\ast} \arrow[d, "\simeq"'] \arrow[r, "j"] &
L^\vee \arrow[d, "\pi"] \arrow[r, bend right=35, "p^\vee"'] &
X \arrow[d, "a"] \arrow[l, "i"'] \\
\hat{X}\setminus\{0\} \arrow[r, "\overline{j}"'] &
\hat{X} &
\{0\} \arrow[l, "i_0"]
\end{tikzcd}
\]

From the adjunction triangle
\begin{equation}\label{eq:adj_tri}
i_! i^! (p^\vee)^! \mbqH_X \lra (p^\vee)^! \mbqH_X \lra  j_* j^*(p^\vee)^! \mbqH_X \overset{+1}{\lra}
\end{equation}
we get, in light of the fact that $(p^\vee)^! = (p^\vee)^*(1)[2]$, the triangle
\[
i_!\mbqH_X \lra \mbqH_{L^\vee}(1)[2] \lra  j_* j^* \mbqH_{L^\vee}(1)[2] \overset{+1}{\lra}
\]

Writing ${^p}\mbqH_X = \mbqH_X[d_X]$ (and, \emph{mutatis mutandis},  for other spaces) and applying  a Tate twist, we derive the exact sequence
\begin{equation}\label{eq:adj_exseq}
0 \lra {^p}\mbqH_{L^\vee} \lra  j_* j^* {^p}\mbqH_{L^\vee} \lra i_!{^p}\mbqH_X(-1) \lra 0
\end{equation}
of mixed Hodge modules.
Apply the functor $\pi_! = \pi_*$ and consider the resulting long exact cohomology sequence
\begin{align}\label{eq:longExPerv}
\ldots \ra \mchp^{k}\pi_! {^p}\mbqH_{L^\vee} \ra \mchp^{k}\pi_! j_* {^p}\mbqH_{L^{\vee,*}} \ra  \mchp^{k}\pi_! i_! {^p} \mbqH_X (-1) \ra \ldots
\end{align}
yields the following result.
\begin{lem}\label{lem:SuppAtZero}
\begin{enumerate}
    \item $\mchp^\ell \pi_! i_! {^p}\mbqH_X \simeq i_{0!} H^{\ell+d_X}(X,\mbq)$
        \item $\mchp^\ell \pi_! j_* {^p}\mbqH_{L^{\vee,*}} \simeq \begin{cases}0 & \text{for } \ell \leq -1\\  i_{0!}H^{\ell+d_X+1}(L^{\vee,*},\mbq) & \text{for } \ell \geq 1\end{cases}$.

    \item $\mch^\ell \pi_!{^p}\mbqH_{L^\vee} \simeq  \begin{cases}  i_{0!} H^{\ell+d_X-1}(X,\mbq)(-1) & \text{for } \ell \leq -1 \\i_{0!} H^{\ell+d_X+1}(X,\mbq) & \text{for } \ell \geq 1.\end{cases}$
\end{enumerate}
\end{lem}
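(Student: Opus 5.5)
Each of the three items follows from base change along the Cartesian squares, together with the observation that all three modules have the form $\pi_!$ of a (shifted) constant or pushed-forward object. Throughout, a mixed Hodge module on $\{0\}$ is identified with a graded-polarizable mixed Hodge structure.

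\emph{Item (1).} Use $\pi\circ i = i_0\circ a$, so that
\[
\pi_! i_! {^p}\mbqH_X = i_{0!}\, a_! \mbqH_X[d_X].
\]
Since $i_{0!}$ is exact, this gives $\mchp^\ell \pi_! i_! {^p}\mbqH_X \simeq i_{0!} H^{\ell+d_X}(X,\mbq)$.

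\emph{Item (3).} Here $\pi_!{^p}\mbqH_{L^\vee}$ is no longer supported at $0$. On the open set $Y$, $\pi$ is an isomorphism, so the restriction is ${^p}\mbqH_Y$. That is a perverse sheaf, so the restriction to $Y$ contributes only to $\mch^0$. Hence for $\ell\neq 0$ the cohomology $\mch^\ell\pi_!{^p}\mbqH_{L^\vee}$ is supported at $0$, and it suffices to compute $i_0^*$ and $i_0^!$ of it.

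By proper base change, $i_0^*\pi_! = a_! i^*$. This gives
\[
i_0^*\pi_!{^p}\mbqH_{L^\vee} = a_!\mbqH_X[d_X+1] = H^{\bullet+d_X+1}(X).
\]
Dually, $i_0^!\pi_! = a_! i^!$, and $i^!\mbqH_{L^\vee}=\mbqH_X(-1)[-2]$. This gives
\[
i_0^!\pi_!{^p}\mbqH_{L^\vee} = H^{\bullet+d_X-1}(X)(-1).
\]

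To convert these into statements about $\mch^\ell$, write $M=\pi_!{^p}\mbqH_{L^\vee}$ and use the triangle $\tau_{<0}M\to M\to\tau_{\ge 0}M$.
\begin{enumerate}
\item For $\ell\ge 1$: $\tau_{\ge1}M$ is supported at $0$, so $i_0^*$ is exact on it. Meanwhile $i_0^*$ of a perverse sheaf (hence of $\tau_{\le0}M$) lives in degrees $\le 0$. Therefore $\mch^\ell M=H^\ell(i_0^*M)$ for $\ell\ge1$, which is $H^{\ell+d_X+1}(X)$.
\item For $\ell\le -1$: symmetrically, $i_0^!$ of a perverse sheaf lives in degrees $\ge 0$. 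Therefore $\mch^\ell M = H^\ell(i_0^!M)$, which is $H^{\ell+d_X-1}(X)(-1)$.
\end{enumerate}
This cohomological-amplitude bookkeeping, namely $i_0^*$ being right $t$-exact and $i_0^!$ left $t$-exact for the perverse $t$-structure, is the main step requiring care.

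\emph{Item (2).} Argue the same way with $N=\pi_!j_*{^p}\mbqH_{L^{\vee,*}}$. Its restriction to $Y$ is again ${^p}\mbqH_Y$, so $\mch^\ell N$ for $\ell\ne0$ is supported at $0$.

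Compute $i_0^*N = a_! i^* j_*\mbqH[d_X+1]$. The stalk of $j_*$ along the zero section is the cohomology of the punctured disc bundle, so $a_!i^*j_*$ computes the cohomology of a punctured tubular neighbourhood of the zero section. This deformation retracts onto the circle bundle, which is homotopy equivalent to $L^{\vee,*}$. Hence $i_0^*N = H^{\bullet+d_X+1}(L^{\vee,*})$, and the previous argument gives the answer for $\ell\ge1$.

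For $\ell\le -1$, use $i^!j_*=0$, so that $i_0^!N=0$. The amplitude argument then gives $\mch^\ell N=0$.

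An alternative route for (2), which can also serve as a consistency check, uses the long exact sequence \eqref{eq:longExPerv} together with (1) and (3).
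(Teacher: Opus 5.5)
Your proof is correct, but it takes a more local route than the paper's.

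\textbf{How the paper argues.} The paper works globally on the affine cone. It applies the pushforward to a point, $a_*$ resp.\ $a_!$ for $\hat X\to\{pt\}$, to $\mchp^\ell\pi_!(\cdot)$. It then uses Artin vanishing (right $t$-exactness of $a_*$ and left $t$-exactness of $a_!$ on the affine $\hat X$) to make the Grothendieck spectral sequences collapse in the relevant range. From this it reads off $H^\bullet(L^\vee)=H^\bullet(X)$, $H^\bullet_c(L^\vee)=H^{\bullet-2}(X)(-1)$ and $H^\bullet(L^{\vee,*})$. For (2) with $\ell\le -1$ it uses $\pi_!j_*\cong\overline{j}_*$ together with left $t$-exactness of $\overline{j}_*$.

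\textbf{How your route differs.} You take the stalk and costalk at the vertex instead. You use base change $i_0^*\pi_!=a_!i^*$ and $i_0^!\pi_!=a_*i^!$, plus only the elementary facts that $i_0^*$ is right and $i_0^!$ left $t$-exact. Your truncation-triangle bookkeeping is exactly right. The identity $i^!j_*=0$, giving $i_0^!N=0$, is a neat replacement for the $\overline{j}_*$ argument. Your version needs no affineness, and it makes (3) especially transparent, including the Tate twist via $i^!\mbqH_{L^\vee}=\mbqH_X(-1)[-2]$. Because all objects here are conic, the contraction lemma gives $i_0^*\cong a_*$ and $i_0^!\cong a_!$ on them, so the two computations are really the same.

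\textbf{The step that needs one more line.} This is (2) for $\ell\ge 1$. Retracting the punctured tubular neighbourhood onto the circle bundle only identifies the underlying $\mbq$-vector spaces of $H^\bullet(i_0^*N)$ and $H^{\bullet+d_X+1}(L^{\vee,*},\mbq)$. The lemma, however, is an isomorphism in $\MHM(\hat X)$. The mixed Hodge structure on $H^\bullet(L^{\vee,*},\mbq)$ is genuinely mixed, with two weights coming from the Gysin sequence, and it is exactly what the later weight computations use. You can close this as follows:
\begin{itemize}
\item The unit $N\to i_{0*}i_0^*N$ gives a morphism $a_{\hat X*}N\to i_0^*N$ in $D^b\MHM(pt)$.
\item Its source is $a_{\hat X*}\pi_*j_*{}^p\mbqH_{L^{\vee,*}}$, which is the cohomology of $L^{\vee,*}$ shifted by $d_X+1$, with its mixed Hodge structure.
\item On underlying spaces this morphism is restriction to the punctured neighbourhood, so your retraction shows it is bijective.
\item A bijective morphism of mixed Hodge structures is an isomorphism, by strictness.
\end{itemize}
This is precisely where the paper's global viewpoint is more economical: there $a_*N$ is the Hodge-theoretic cohomology of $L^{\vee,*}$ by definition.
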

\begin{proof}
1.) This isomorphism follows from the commutativity of the right square in the diagram above:
\begin{equation}\label{eq:IdCohomX}
\mchp^\ell \pi_! i_! {^p}\mbqH_X \simeq \mchp^\ell i_{0!} a_! {^p}\mbqH_X  \simeq i_{0!} H^{\ell+d_X}(X,\mbq)
\end{equation}

2.) We first show that $\mchp^\ell \pi_! j_* {^p}\mbqH_{L^{\vee,*}}$ has support at $\{0\}$ for $\ell \neq 0$. This follows from the isomorphism
\begin{equation}\label{eq:SuppOnZero}
\overline{j}^*\mchp^\ell \pi_! j_* {^p}\mbqH_{L^{\vee,*}} \simeq \overline{j}^*\mchp^\ell \overline{j}_* {^p}\mbqH_{\hat{X} \setminus \{0\}} = 0
\end{equation}
for $\ell \neq 0$.

The claim for $\ell \leq -1$ follows from
\begin{equation}\label{eq:1}
\mchp^\ell \pi_! j_* {^p}\mbqH_{L^{\vee,*}} \simeq \mchp^\ell \overline{j}_* {^p}\mbqH_{\hat{X} \setminus \{0\}} =0 \quad \text{for } \ell \leq -1
\end{equation}
since $\overline{j}_*$ is left $t$-exact (see \cite[Theorem 5.2.4 (iii)]{Dimca}).

For $\ell \geq 1$ we consider the spectral sequence
\begin{eqnarray*}
E_{k,\ell}^2 = \mch^k\,  a_* \mchp^\ell \pi_!j_*{^p}\mbqH_{L^{\vee,*}} &\Longrightarrow&E^\infty_{k+\ell} = \mch^{k + \ell}\, a_* \pi_! j_* {^p} \mbqH_{L^{\vee,*}} = \mch^{k+\ell} a_* \mbqH_{L^{\vee,*}}[d_X+1]\\
&&\phantom{E^\infty_{k+\ell}}=H^{k+\ell+d_X+1}(L^{\vee,*},\mbq).
\end{eqnarray*}

We have $E^2_{k, \ell} = 0 $ if $k \neq 0 \neq \ell $ (by \cref{eq:SuppOnZero} and since $a_*$ is exact on objects with support in $\{0\}$) and also if $k \geq 0$ (since $a_*$ is right $t$-exact by \cite[Theorem 5.2.16 (i)]{Dimca}).
We deduce that
\[
\mch^0\,  a_* \mchp^\ell \pi_!j_* {^p}\mbqH_{L^{\vee,*}} \simeq E^2_{0,\ell} \simeq E^{\infty}_\ell \simeq H^{\ell+d_X+1}(L^{\vee,*},\mbq) \quad \text{for }\ell \geq 0,
\]
and in particular,
\[
\mch^\ell \pi_! j_*{^p}\mbqH_{L^{\vee,*}} \simeq  i_{0!} H^{\ell+d_X+1}(L^{\vee,*},\mbq) \quad \text{ for } \ell \geq 1.
\]

3.) We first show that $\mchp^\ell \pi_! {^p} \mbqH_{L^\vee}$ has support at $\{0\}$ for $\ell \neq 0$.
This follows from
\[
\overline{j}^* \mchp^\ell \pi_! {^p} \mbqH_{L^\vee} \simeq  \mchp^\ell  \overline{j}^*\pi_! {^p} \mbqH_{L^\vee} \simeq \mchp^\ell  j^* {^p} \mbqH_{L^{\vee}}  \simeq \mchp^\ell\, {^p}\mbqH_{L^{\vee}} = 0
\]
for $ \ell \neq 0$.

For $\ell \geq 1$ we consider the spectral sequence

\begin{eqnarray}
E_{k,\ell}^2 = \mch^k\,  a_* \mchp^\ell \pi_!{^p}\mbqH_{L^\vee}&\Longrightarrow& E^\infty_{k+\ell} = \mch^{k + \ell}\, a_* \pi_! {^p} \mbqH_{L^\vee} = \mch^{k+\ell} a_* \mbqH_{L^\vee}[d_X+1] \notag\\
&&\phantom{E^\infty_{k+\ell}}= H^{k+\ell+d_X+1}(L^\vee,\mbq) = H^{k+\ell+d_X+1}(X,\mbq) \label{eq:specseq1}.
\end{eqnarray}

Now we have $E^2_{k, \ell} = 0 $ if $k \neq 0 \neq \ell $ (by \cref{lem:SuppAtZero} part 3, and since $a_*$ is exact on objects with support in $\{0\}$) and also  if $k \geq 0$ (since $a_*$ is right $t$-exact by \cite[Theorem 5.2.16 (i)]{Dimca})

We deduce that
\[
\mch^0\,  a_* \mchp^\ell \pi_!{^p}\mbqH_{L^{\vee,*}} \simeq E^2_{0,\ell} \simeq E^{\infty}_\ell \simeq H^{\ell+d_X+1}(L^{\vee},\mbq) \simeq H^{\ell+d_X+1}(X,\mbq)  \quad \text{for }\ell \geq 0,
\]
and so
\[
\mch^\ell \pi_!{^p}\mbqH_{L^{\vee,*}} \simeq  i_{0!} H^{\ell+d_X+1}(X,\mbq) \quad \text{ for } \ell \geq 1.
\]

The Grothendieck spectral sequence
\begin{align*}
E_{k,\ell}^2 &= \mch^k\,  a_{!} \mchp^\ell \pi_!{^p}\mbqH_{L^\vee} \Longrightarrow E^\infty_{k+\ell} =  \mch^{k + \ell} a_! \pi_! {^p}\mbqH_{L^\vee}
\end{align*}
gives
\begin{align*}
\mch^0\,  a_{!} \mchp^\ell \pi_!{^p}\mbqH_{L^\vee} &\simeq E^2_{0,\ell} \simeq E^{\infty}_\ell =  \mch^{k + \ell} a_! \pi_! {^p}\mbqH_{L^\vee} \simeq \mch^{k + \ell} a_! p^\vee_! {^p}\mbqH_{L^\vee}  \simeq  H^{k+\ell+ d_X-1}(X,\mbq)(-1) \quad \text{for } \ell \leq 0,
\end{align*}
where we have used $p^\vee_! \mbqH_{L^\vee} \simeq \mbqH_X(-1)[-2]$.
In particular,
\[
\mch^\ell \pi_!{^p}\mbqH_{L^\vee} \simeq  i_{0!} H^{\ell+d_X-1}(X,\mbq)(-1) \quad \text{ for } \ell \leq -1.
\]
\end{proof}
As a consequence of \cref{lem:SuppAtZero},  the long exact cohomology sequence\eqref{eq:longExPerv} becomes
\begin{equation}\label{eq:FirstLongExSeqFinal}
\xymatrix{
 \ar[r] & i_{0!} H^{d_X-3}(X,\mbq)(-1) \ar[r] & 0 \ar[r] & i_{0!} H^{d_X-2}(X,\mbq)(-1)\\
\ar[r] & i_{0!} H^{d_X-2}(X,\mbq)(-1) \ar[r] &
0
\ar[r] & i_{0!} H^{d_X-1}(X,\mbq)(-1)\\
 \ar[r] &\mchp^0 \pi_* {^p}\mbqH_{L^\vee} \ar^{\alpha}[r]&   \mchp^0 \pi_* j_* {^p}\mbqH_{L^{\vee,*}} \ar[r] & i_{0!} H^{d_X}(X,\mbq)(-1) \\
 \ar[r] & i_{0!} H^{d_X+2}(X,\mbq) \ar[r] &
i_{0!} H^{d_X+2}(L^{\vee,*},\mbq)   \ar[r]& i_{0!} H^{d_X+1}(X,\mbq)(-1) \\
\ar[r] & i_{0!} H^{d_X+3}(X,\mbq) \ar[r] &
i_{0!} H^{d_X+3}(L^{\vee,*},\mbq) \ar[r] & i_{0!} H^{d_X+2}(X,\mbq)(-1) \\
\ar[r] & \ldots\\}
\end{equation}

In order to identify the connecting homomorphism $i_{0!} H^{d_X + \ell}(X,\mbq)(-1) \ra i_{0!} H^{d_X + \ell+2}(X,\mbq)$ in the above sequence \eqref{eq:FirstLongExSeqFinal} for $\ell \geq 0$ we remark that taking global sections of the triangle \eqref{eq:adj_exseq} we get the Thom--Gysin sequence (up to a Tate-twist), see, e.g., \cite[Section 1.3]{RSW}:
\[
\lra H^k(X,\mbq)(-1) \lra H^{k +2}(X,\mbq) \lra H^{k+2}(L^{\vee,*},\mbq) \overset{+1}\lra
\]
The connecting homomorphism $H^{d_X + \ell} (X,\mbq)(-1) \lra H^{d_X + \ell +2}(X,\mbq)$ is induced by the map between the global sections of ${^p} \mch^\ell \pi_!i_! {^p}\mbqH_X$ (cf. \eqref{eq:IdCohomX}) and the $E^\infty_{d_X +\ell+2}$-term  of the spectral sequence \eqref{eq:specseq1}. However, the $E^\infty_{d_X +\ell+2}$-term is equal to the $E^2_{0,\ell+1}$-term of the spectral sequence, which in turn is equal to the global sections of $i_{0!} H^{d_X + \ell+2}(X,\mbq) \simeq \mchp^{\ell+1} \pi_!{^p}\mbqH_{L^\vee}$. We conclude that
the connecting homomorphism in the lower part of the  sequence \eqref{eq:FirstLongExSeqFinal} is induced by the cup product with the first Chern class of $L^\vee$.

Starting with the adjunction triangle
\begin{equation}
j_! j^* (p^\vee)^* \mbqH_X \lra (p^\vee)^* \mbqH_X \lra  i_! i^*(p^\vee)^* \mbqH_X \overset{+1}{\lra}
\end{equation}
we get by duality (and a Tate twist) the long exact sequence
\begin{equation}\label{eq:SecondLongExSeqFinal}
\xymatrix{
 \ar[r] & i_{0!} H^{d_X-2}(X,\mbq) \ar[r] & \mchp^{-2}\pi_! j_! {^p}\mbqH_{L^{\vee,*}} \ar[r] & i_{0!} H^{d_X-3}(X,\mbq)(-1)\\
\ar[r] & i_{0!} H^{d_X-1}(X,\mbq) \ar[r] & \mchp^{-1}\pi_! j_! {^p}\mbqH_{L^{\vee,*}} \ar[r] & i_{0!} H^{d_X-2}(X,\mbq)(-1)\\
 \ar[r] &i_{0!} H^{d_X}(X,\mbq) \ar[r]&   \mchp^0 \pi_! j_! {^p}\mbqH_{L^{\vee,*}} \ar^{\beta}[r] & \mchp^0 \pi_! {^p}\mbqH_{L^\vee}\\
 \ar[r] & i_{0!} H^{d_X+1}(X,\mbq) \ar[r] & 0  \ar[r]& i_{0!} H^{d_X+2}(X,\mbq) \\
\ar[r] & i_{0!} H^{d_X+2}(X,\mbq) \ar[r] & 0  \ar[r] & i_{0!} H^{d_X+3}(X,\mbq) }
\end{equation}
where the connecting homomorphism is again given by cup product with the Euler class.

\begin{lem}\label{lem:decomp} There is the following isomorphism in $\MHM(\hat{X})$:\[
\mchp^0 \pi_* {^p}\mbqH_{L^\vee} \simeq \overline{j}_{!*} {^p}\mbqH_{\hat{X}\backslash \{0\}}\oplus i_{0!} H^{d_X+1}(X,\mbq),
\]
where, as usual, for a $\cM\in\MHM(U)$ and for an embedding $\gamma:U\rightarrow \overline{U}$, we write $\gamma_{!*}\cM$ for the image in the abelian category $\MHM(\overline{U})$ of the morphism $\cH^0 \gamma_!\cM \rightarrow \cH^0 \gamma_*\cM$.
\end{lem}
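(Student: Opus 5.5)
The plan is to apply the decomposition theorem to the proper map $\pi\colon L^\vee\to\hat{X}$, which contracts the zero section to the origin. Since $L^\vee$ is smooth, ${^p}\mbqH_{L^\vee}$ is a pure Hodge module of weight $d_{L^\vee}=d_X+1$. Because $\pi$ is projective, Saito's decomposition theorem shows that $\pi_*{^p}\mbqH_{L^\vee}$ is pure of weight $d_X+1$ and splits (non-canonically, but in $D^b\MHM(\hat X)$) as $\bigoplus_\ell \mchp^\ell\pi_*{^p}\mbqH_{L^\vee}[-\ell]$. Each $\mchp^\ell$ is pure of weight $d_X+1+\ell$ and hence semisimple in $\MHM(\hat X)$. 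In particular $\mchp^0\pi_*{^p}\mbqH_{L^\vee}$ is a semisimple pure Hodge module of weight $d_X+1$. By strict support decomposition it equals its part with strict support $\hat X$ plus a part supported on $\{0\}$.

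First, identify the component with strict support $\hat X$. Over $\hat X\setminus\{0\}$ the map $\pi$ is an isomorphism, so $\overline{j}^*\mchp^0\pi_*{^p}\mbqH_{L^\vee}\simeq{^p}\mbqH_{\hat X\setminus\{0\}}$, as computed in part 3 of the proof of \cref{lem:SuppAtZero}. A semisimple module with this restriction has as its strict-support-$\hat X$ summand exactly $\overline{j}_{!*}{^p}\mbqH_{\hat X\setminus\{0\}}$. The remaining summand is $i_{0!}$ of some pure Hodge structure $H$ of weight $d_X+1$.

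Second, compute $H$, which I expect to be the main point. For this I apply $i_0^!$, or equivalently take $\mch^0$ of $a_*$ restricted appropriately. A cleaner route uses the exact sequence \eqref{eq:FirstLongExSeqFinal}. The map $\alpha\colon\mchp^0\pi_*{^p}\mbqH_{L^\vee}\to\mchp^0\overline{j}_*{^p}\mbqH_{\hat X\setminus\{0\}}$ is injective, since the preceding term is $0$. Its kernel is therefore zero, so the summand $i_{0!}H$ must be killed by the further analysis. Hence I would instead compute $H$ via $i_0^*$ and the spectral sequence. We have $\mch^k i_0^*\overline{j}_{!*}=0$ for $k\ge 0$, by the support condition for intermediate extensions. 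Also, $i_0^*\pi_*{^p}\mbqH_{L^\vee}=a_*i^*{^p}\mbqH_{L^\vee}$ by proper base change, which equals $H^{\bullet+d_X+1}(X,\mbq)$. Taking $\mch^0$ of $i_0^*$ applied to the decomposition $\bigoplus_\ell\mchp^\ell[-\ell]$ gives the following. The summands with $\ell\ge1$ are supported at $0$ and contribute only in degrees $\ge1$. The summands with $\ell\le-1$ contribute to $\mch^{0}$ only via $\mch^{-\ell}$ of a skyscraper, which vanishes. The $\overline{j}_{!*}$ part contributes nothing in degree $0$. Therefore $H\simeq \mch^0 i_0^*\pi_*{^p}\mbqH_{L^\vee}\simeq H^{d_X+1}(X,\mbq)$.

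To make this rigorous I would check that the degree bookkeeping matches: $\mch^{k}i_0^*$ of a module supported at $0$ vanishes for $k\neq0$. I would also check the weights for consistency. $H^{d_X+1}(X,\mbq)$ is pure of weight $d_X+1$ because $X$ is smooth projective, which matches the required weight, so no Tate twist appears. Combining the two components gives the stated isomorphism in $\MHM(\hat X)$. The step needing most care is the vanishing $\mch^0 i_0^*\overline{j}_{!*}{^p}\mbqH_{\hat X\setminus\{0\}}=0$, which is the defining support condition $\mch^k i_0^*\overline{j}_{!*}=0$ for $k\ge 0$ when the point has dimension $0$.
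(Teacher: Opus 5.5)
Your final argument is correct and is essentially the paper's proof. The decomposition theorem splits off $\overline{j}_{!*}{^p}\mbqH_{\hat X\setminus\{0\}}$ from a summand supported at $0$, and that summand is identified via $\mch^0 i_0^*$, proper base change $i_0^*\pi_*\simeq a_*i^*$, and the vanishing $\mch^0 i_0^*\overline{j}_{!*}{^p}\mbqH_{\hat X\setminus\{0\}}=0$. The only difference is bookkeeping: you read off degrees from the splitting $\pi_*{^p}\mbqH_{L^\vee}\simeq\bigoplus_\ell\mchp^\ell\pi_*{^p}\mbqH_{L^\vee}[-\ell]$, where the paper runs a Grothendieck spectral sequence, which amounts to the same.

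Delete the aside about $\alpha$, however, because it misreads the sequence. In \eqref{eq:FirstLongExSeqFinal} the term immediately preceding $\mchp^0\pi_*{^p}\mbqH_{L^\vee}$ is $i_{0!}H^{d_X-1}(X,\mbq)(-1)$, not $0$. The $0$ sits one term earlier and only shows that $i_{0!}H^{d_X-1}(X,\mbq)(-1)\to\mchp^0\pi_*{^p}\mbqH_{L^\vee}$ is injective. So $\alpha$ is not injective: its kernel is exactly the summand supported at $0$, i.e.\ $i_{0!}H^{d_X-1}(X,\mbq)(-1)\cong i_{0!}H^{d_X+1}(X,\mbq)$ by Hard Lefschetz, consistent with the lemma and with the remark following it in the paper.
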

\begin{proof}
Notice that decomposition theorem applied to  the pure Hodge module $\mchp^0 \pi_* {^p}\mbqH_{L^\vee}= \mchp^0 \pi_! {^p}\mbqH_{L^\vee}$ yields
$$
\mchp^0 \pi_* {^p}\mbqH_{L^\vee} = \overline{j}_{!*} {^p}\mbqH_{\hat{X}\backslash \{0\}}\oplus \cF_0
$$
where $\textup{supp}(\cF_0)=\{0\}$.

There is the Grothendieck spectral sequence
\begin{eqnarray*}
E_{k,\ell}^2 = \mch^k\,  i_0^* \mchp^\ell \pi_!{^p}\mbqH_{L^\vee}&\Longrightarrow &
E^\infty_{k+\ell} = \mch^{k + \ell}\, i_0^* \pi_! {^p} \mbqH_{L^\vee}
= \mch^{k+\ell} a_! i^*  {^p} \mbqH_{L^\vee}
\\
&&\phantom{E^\infty_{k+\ell}}
=\mch^{k+\ell} a_! i^*   \mbqH_{L^\vee}[d_X+1] = \mch^{k+\ell} a_! \mbqH_{X}[d_X+1]
\\
&&\phantom{E^\infty_{k+\ell}}= \mch^{k+\ell} a_* \mbqH_{X}[d_X+1] = H^{k+\ell+d_X+1}(X,\mbq).
\end{eqnarray*}
This time, $E^2_{k, \ell} = 0 $ if $k \neq 0 \neq \ell $ (by the proof of \cref{lem:SuppAtZero} part 3, and since $i_0^*$ is $t$-exact on objects with support in $\{0\}$) and also  if $k \geq 0$ (since $i_0^*$ is right $t$-exact by \cite[Theorem 5.2.4 (iii)]{Dimca}).

We deduce
\[
\mch^0\,  i_0^* \mchp^\ell \pi_!{^p}\mbqH_{L^\vee} = E^2_{0,\ell} = E^{\infty}_\ell \simeq H^{\ell+d_X+1}(X,\mbq) \quad \text{for }\ell \geq 0.
\]
Since $\mch^0 i_0^* \overline{j}_{!*} {^p}\mbqH_{\hat{X}\backslash \{0\}} = 0$ by \cite[1.4.24]{BBD82} we get
$\mch^0 i_0^* \mcf_0 \simeq H^{d_X+1}(X,\mbq)$, which shows the claim.

\end{proof}
\cref{lem:decomp} shows that $\overline{j}_{!*} {^p}\mbqH_{\hat{X}\backslash \{0\}}$ is necessarily the image of the morphism
\[
\mchp^0 \pi_! j_! {^p}\mbqH_{L^{\vee,*}} \longrightarrow  \mchp^0 \pi_! {^p}\mbqH_{L^\vee}.
\]

Similarly, the image of the morphism
$ \mchp^0 \pi_* {^p}\mbqH_{L^\vee} \longrightarrow \mchp^0 \pi_* j_* {^p}\mbqH_{L^{\vee,*}}$ in the exact sequence \eqref{eq:FirstLongExSeqFinal} is $\overline{j}_{!*} {^p}\mbqH_{\hat{X}\backslash \{0\}}$, and therefore  its kernel is isomorphic to $i_{0!} H^{d_X-1}(X,\mbq)$. This is consistent with the isomorphism  $H^{d_X-1}(X,\mbq) \cong H^{d_X+1}(X,\mbq)$ given by Hard Lefschetz.

By composing the morphisms $\alpha$ and $\beta$ that occur in the sequences \eqref{eq:FirstLongExSeqFinal} and \eqref{eq:SecondLongExSeqFinal}, respectively, we obtain the canonical morphism $\mchp^0 \pi_* j_! {^p}\mbqH_{L^{\vee,*}} \longrightarrow \mchp^0 \pi_* j_* {^p}\mbqH_{L^{\vee,*}}$. This yields the following exact sequence:
\begin{equation}\label{eq:4TermSeq}
0 \longrightarrow i_{0!} \PH^{d_X}(X,\mbq)
\longrightarrow   \mchp^0 \pi_* j_! {^p}\mbqH_{L^{\vee,*}}
\stackrel{\alpha\circ\beta}{\longrightarrow}   \mchp^0 \pi_* j_* {^p}\mbqH_{L^{\vee,*}}
\longrightarrow i_{0!} \KH^{d_X}(X,\mbq)(-1)
\longrightarrow 0,
\end{equation}
where, for any $k\in \dN$,
\[
\PH^{k}(X,\mbq) \coloneqq  \coker(H^{k-2}(X,\dQ)(-1)\rightarrow H^k(X,\dQ))
\]
is the primitive $k$-th cohomology of $X$
\[
\KH^{k}(X,\mbq)(-1) \coloneqq  \ker(H^k(X,\dQ)(-1)\rightarrow H^{k+2}(X,\dQ)).
\]
\begin{cor}\label{cor:WeightIota}
    \begin{enumerate}
        \item The cohomology sheaves $H^\ell\iota_+\cO_Y$ and $H^\ell\iota_\dag \cO_Y$ underlie mixed Hodge modules with weight filtrations of length at most two.
        \item We have
        $$
        W_{d_X+1} H^0 \iota_+ \cO_Y \simeq \overline{j}_{\dag +}\cO_Y
        \qquad\textup{and}\qquad
        \gr^W_{d_X+2} H^0 \iota_+ \cO_Y \simeq i_{0,+} \KH^{d_X}(X,\mbc)(-1)
        $$
        and, dually,
        $$
        W_{d_X} H^0 \iota_\dag \cO_Y \simeq i_{0,+} \PH^{d_X}(X,\mbc)
        \qquad\textup{and}\qquad
        \gr^W_{d_X+1} H^0 \iota_\dag \cO_Y \simeq \iota_{\dag +}\cO_Y
        $$
        In particular, the weight step $W_{d_X} H^0 \iota_\dag \cO_Y \simeq i_{0,+} \PH^{d_X}(X,\mbc)$ resp.\ the graded piece
        $\gr^W_{d_X+2} H^0 \iota_+ \cO_Y \simeq i_{0,+} \KH^{d_X}(X,\mbc)(-1)$ are kernel resp.\ cokernel of the natural morphism
        $$
            H^0\iota_\dag\cO_Y \longrightarrow H^0\iota_+ \cO_Y.
        $$
    \item For $\ell> 0$, $H^\ell\iota_+ \cO_Y$ is pure of weight $d_X+\ell+2$ and we have
    $$
        W_{d_X+\ell+2} H^\ell\iota_+ \cO_Y \simeq i_{0,+} \KH^{d_X+\ell}(X,\mbc)(-1).
    $$
    Dually, for $\ell<0$, $H^\ell\iota_\dag \cO_Y$ is pure of weight $d_X+\ell$ and we have $$
    W_{d_X+\ell} H^\ell\iota_\dag \cO_Y \simeq i_{0,+} \PH^{d_X+\ell}(X,\mbc).
    $$
    \end{enumerate}
\end{cor}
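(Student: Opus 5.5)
The plan is to identify the mixed Hodge modules $\mchp^\ell \pi_* j_* {^p}\mbqH_{L^{\vee,*}}$ and $\mchp^\ell \pi_! j_! {^p}\mbqH_{L^{\vee,*}}$ with the Hodge modules underlying $H^\ell\iota_+\cO_Y$ and $H^\ell\iota_\dag\cO_Y$. This uses $L^{\vee,*}\simeq Y$, so that $\pi\circ j$ corresponds to $\overline{j}$ and, after composing with $k$, to $\iota$. We then read off the weight filtrations from the sequences \eqref{eq:FirstLongExSeqFinal}, \eqref{eq:SecondLongExSeqFinal} and \eqref{eq:4TermSeq}, using the weight conventions fixed by ${^p}\mbqH_X$ being pure of weight $d_X$ and ${^p}\mbqH_{L^\vee}$ pure of weight $d_X+1$. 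The statements for $\iota_\dag$ will follow from those for $\iota_+$ by duality, since $\mathbb{D}\iota_*=\iota_!\mathbb{D}$ and $\mathbb{D}\,{^p}\mbqH_Y={^p}\mbqH_Y(d_Y)$. Equivalently, one can argue directly with \eqref{eq:SecondLongExSeqFinal}; I will do $\iota_+$ carefully and dualize.

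For $\ell>0$, \cref{lem:SuppAtZero} together with \eqref{eq:FirstLongExSeqFinal} gives an exact sequence
\[
i_{0!}H^{d_X+\ell+1}(X,\mbq)\lra \mchp^\ell\pi_*j_*{^p}\mbqH_{L^{\vee,*}}\lra i_{0!}H^{d_X+\ell}(X,\mbq)(-1)\xrightarrow{c_1} i_{0!}H^{d_X+\ell+2}(X,\mbq).
\]
Here the connecting maps are cup product with $c_1(L^\vee)$, as explained before \cref{lem:decomp}. By Hard Lefschetz, $c_1\colon H^{d_X+\ell-1}\to H^{d_X+\ell+1}$ is surjective for $\ell\ge 0$. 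Hence the first map has zero source contribution, namely its image is the cokernel of a surjection, which vanishes. Therefore $H^\ell\iota_+\cO_Y\simeq i_{0,+}\KH^{d_X+\ell}(X,\mbc)(-1)$. This module is pure of weight $d_X+\ell+2$, since $H^{d_X+\ell}(X)$ is pure of weight $d_X+\ell$ and the Tate twist adds $2$.

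For $\ell=0$, the image of $\alpha$ is $\overline{j}_{!*}{^p}\mbqH_{\hat X\setminus\{0\}}$ by \cref{lem:decomp} and the remark following it. This image is pure of weight $d_X+1$ and injects into $\mchp^0\pi_*j_*$. Its cokernel is $i_{0!}\KH^{d_X}(X,\mbq)(-1)$, by the same Hard Lefschetz argument applied to the degree $0$ line, or equivalently by \eqref{eq:4TermSeq}, and this cokernel is pure of weight $d_X+2$. Since a mixed Hodge module $j_*$ of a pure module has weights $\ge d_X+1$, this two-step extension is exactly the weight filtration. This gives $W_{d_X+1}=\overline{j}_{\dag+}\cO_Y$ and $\gr^W_{d_X+2}\simeq i_{0,+}\KH^{d_X}(-1)$. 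Dualizing yields the statements for $H^0\iota_\dag$: the submodule $i_{0,+}\PH^{d_X}$ has weight $d_X$ and the quotient is the intermediate extension of weight $d_X+1$. It also gives the purity of $H^\ell\iota_\dag$ for $\ell<0$, using $\PH^{d_X+\ell}$, since the dual of $\KH^{d_X-\ell}(-1)$ is $\PH^{d_X+\ell}$ by Poincaré duality. For $\ell<0$ the module $H^\ell\iota_+$ vanishes, and dually $H^\ell\iota_\dag$ vanishes for $\ell>0$; this proves (1).

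The kernel/cokernel statement for $H^0\iota_\dag\cO_Y\to H^0\iota_+\cO_Y$ is precisely \eqref{eq:4TermSeq}, once we note that $\alpha\circ\beta$ is the canonical morphism. The main obstacle is the bookkeeping of weights: checking that the identification $\pi\circ j\leftrightarrow\iota$ (after $k_*$) respects the normalisations, so that $\cO_Y$ corresponds to ${^p}\mbqH$ with weight $d_Y=d_X+1$. Care is also needed to confirm that the Hard Lefschetz surjectivity is used in the correct degree range. If \eqref{eq:FirstLongExSeqFinal} is indexed slightly differently, I would redo the degree $0$ and $\ell>0$ lines directly from the Gysin sequence.
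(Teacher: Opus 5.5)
Your proposal is correct and takes essentially the same route as the paper. You identify $H^\ell\iota_+\cO_Y$ with $\mchp^\ell\pi_*j_*{^p}\mbqH_{L^{\vee,*}}$ and read off the weights from \eqref{eq:FirstLongExSeqFinal} together with \cref{lem:SuppAtZero} and \cref{lem:decomp}. You then use Hard Lefschetz to kill the possible weight-$(d_X+\ell+1)$ piece $\PH^{d_X+\ell+1}(X)$ for $\ell\geq 0$, and obtain the $\iota_\dag$ statements by duality, with \eqref{eq:4TermSeq} giving the kernel/cokernel claim. One small slip: in degree $0$, the cokernel $\KH^{d_X}(X)(-1)$ of $\alpha$ comes directly from exactness and the identification of the connecting map with $c_1$, and Hard Lefschetz is not needed there.
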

\begin{proof}
\begin{enumerate}
    \item

    In the exact sequence \eqref{eq:FirstLongExSeqFinal}, the Hodge module corresponding to $H^0\iota_\dag \cO_Y$ sits between two objects of weights $d_X+1$ and $d_X+2$. Hence, only these two numbers can occur as
    weight filtration steps on $H^0\iota_\dag \cO_Y$. A similar argument applies to $H^0\iota_+ \cO_Y$, using the exact sequence \eqref{eq:SecondLongExSeqFinal}, where the possible weights are $d_X$ and $d_X+1$.

    \item  The first two isomorphisms follow from the exact sequence \eqref{eq:FirstLongExSeqFinal} and \cref{lem:SuppAtZero} part 2, and \cref{lem:decomp}. The third and fourth isomorphism then follow by duality.

    \item
        From the identification \cref{lem:SuppAtZero}, part 2, together with (the lower rows of) the exact sequence \eqref{eq:FirstLongExSeqFinal},
        we obtain
        $$
        W_{d_X+\ell+1} H^\ell\iota_+ \cO_Y \simeq i_{0,+} \PH^{d_X+\ell+1}(X,\mbc)
        \qquad\textup{and}\qquad
        \Gr^W_{d_X+\ell+2} H^\ell\iota_+ \cO_Y \simeq i_{0,+} \KH^{d_X+\ell}(X,\mbc).
        $$
        However, we have $\PH^{d_X+\ell+1}(X,\mbc)=0$ for $\ell\geq 0$, which gives the purity statement and the weight result.

        The second statement follows by duality (or alternatively by invoking the exact sequence \eqref{eq:SecondLongExSeqFinal}). More precisely, one shows that for $\ell<0$
        $$
        W_{d_X+\ell} H^\ell\iota_\dag \cO_Y \simeq i_{0,+} \PH^{d_X+\ell}(X,\mbc)
        \qquad\textup{and}\qquad
        \Gr^W_{d_X+\ell+1} H^\ell\iota_\dag \cO_Y \simeq i_{0,+} \KH^{d_X+\ell-1}(X,\mbc)
        $$
        and since $\KH^{d_X+\ell-1}(X,\mbc)=0$ for all $\ell\leq 0$, we arrive again at the purity statement we are looking for.
\end{enumerate}
\end{proof}
These statements about the weight filtration on the cohomology sheaves of $\iota_+\cO_Y$ translates into corresponding statements for the local cohomology groups $H^k_{\hat{X}}\cO_V$ by \cref{prop:LocCohomCone}. More precisely, we have the following.

\begin{cornoproof}\label{cor:weightLocCohom}
In the situation of \cref{cor:WeightIota}, the weight filtration on the local cohomology groups of the cone $\hat{X}$ is given by:
\renewcommand{\arraystretch}{1.3}
$$
\Gr_{\ell}^W H^{d_V-d_{\hat{X}}+i}_{\hat{X}}\cO_V
=
\begin{cases}
\iota_{\dag +}\cO_Y    & \text{if } i=0 \text{ and } \ell=2d_V-d_{\hat{X}}, \\
i_{0,+}\KH^{d_X+i}(X,\dC)(-1)   & \textup{if } i \in \{0,\ldots,d_{\hat{X}}-2\} \text{ and } \ell=2d_V-d_{\hat{X}}+i+1,\\
 0   &\text{else.}
\end{cases}
$$
\renewcommand{\arraystretch}{1}
\end{cornoproof}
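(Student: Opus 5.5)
The plan is to transport the weight statements of \cref{cor:WeightIota} to the local cohomology modules through the isomorphisms of \cref{prop:LocCohomCone}, keeping careful track of the Tate twist in the triangle \eqref{eq:TopSeqWithTate}.

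\emph{Step 1: index bookkeeping.} Since $d_Y=d_{\hat X}=d_X+1$, \cref{prop:LocCohomCone} gives, for $k\in[0,d_Y-2]$, an isomorphism $H^k\iota_+\cO_Y\simeq H^{d_V-d_{\hat X}+k}_{\hat X}\cO_V$. Thus the index $i$ in the statement is just $k$, ranging over $\{0,\dots,d_{\hat X}-2\}$. By the Hartshorne--Lichtenbaum and Second Vanishing Theorems recalled before \cref{prop:LocCohomCone}, all other local cohomology groups $H^{d_V-d_{\hat X}+i}_{\hat X}\cO_V$ with $i\ge 0$ vanish, except $i=d_{\hat X}$. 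That degree is $d_V$, where $H^{d_V}_{\hat X}\cO_V=0$, which accounts for the ``else'' case.

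\emph{Step 2: lifting to mixed Hodge modules.} Apply perverse cohomology to \eqref{eq:TopSeqWithTate}. The object $\iota_{0!}\mbqH_{\{0\}}$ only contributes in degree $0$, and this only affects the degrees $d_Y-1,d_Y$ already treated. In the remaining range we therefore get isomorphisms of mixed Hodge modules between the local cohomology modules, carrying the twist $(d_V)$, and $\mathcal H^\bullet\iota_*\mbqH_Y$ carrying the twist $(d_Y)$. Tracking the shifts $[2d_V]$ versus $[2d_Y]$ together with the twists, the weights get shifted by $2(d_V-d_Y)$, as noted right after \cref{prop:LocCohomCone}. The point to check is the normalization. \cref{cor:WeightIota} is phrased for $\iota_+\cO_Y$, which corresponds to $\iota_*{}^p\mbqH_Y$, whose generic weight is $d_Y=d_X+1$. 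We then need to verify that the Hodge module on the local cohomology side is normalized so that weight $w$ on $H^k\iota_+\cO_Y$ becomes weight $w+2(d_V-d_Y)$ in the convention of the statement.

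\emph{Step 3: reading off the graded pieces.} For $i=0$, \cref{cor:WeightIota}(2) gives the weights $d_X+1$, with graded piece $\overline{j}_{\dag+}\cO_Y$, and $d_X+2$, with graded piece $i_{0,+}\KH^{d_X}(X,\dC)(-1)$. Adding the shift $2d_V-2d_Y$ turns these into $2d_V-d_{\hat X}$ and $2d_V-d_{\hat X}+1$, matching the first two cases of the statement for $i=0$. For $1\le i\le d_{\hat X}-2$, \cref{cor:WeightIota}(3) says $H^i\iota_+\cO_Y$ is pure of weight $d_X+i+2$ with piece $i_{0,+}\KH^{d_X+i}(X,\dC)(-1)$. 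After the shift its weight is $2d_V-d_{\hat X}+i+1$, as claimed.

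\emph{Main obstacle.} The only real difficulty is Step 2: making sure the sign and size of the weight shift are consistent with the conventions ($\omega$ versus $\mbqH$, and the paper's weight normalization for $\iota_+\cO_Y$). I would check this on the case $i=0$, where $\overline{j}_{!*}$ of a constant variation on $Y$ has weight $d_Y$ in the perverse normalization. Its image under $k_!k^!$ has weight $d_Y+2(d_V-d_Y)=2d_V-d_{\hat X}$, and this fixes the normalization for all $i$.
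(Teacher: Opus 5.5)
Your proposal is correct and follows the route the paper intends; the corollary is stated without proof as a direct translation of \cref{cor:WeightIota} through the isomorphisms of \cref{prop:LocCohomCone}, with the weight shift $2(d_V-d_Y)$ coming from the Tate twists in \eqref{eq:TopSeqWithTate} and the vanishing theorems covering the remaining degrees. Your normalization check is right; the graded pieces are thereby identified only up to the twist $(d_Y-d_V)$, i.e.\ as underlying $\mathcal{D}$-modules, which is also how the statement has to be read.
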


\section{Colocalization properties of derived tautological systems}
\label{sec:Coloc}

Throughout this and the following section, we consider a connected reductive linear algebraic group $G$ over $\C$ and we denote by $\mathfrak g$ its Lie algebra.
In this section, we discuss the Chevalley--Eilenberg--Spencer--Euler--Koszul complex from \cite{GS-Duality} in more detail and  (see \cref{prop:FunctConstrHatT}) show a colocalization property  extending that for tautological systems from \cite[Theorem 6.3]{GRSSW}.

In \cite{GS-Duality}, we studied for any equivariant $\O_Z$-module $\cM$ on a smooth $G$-variety $Z$ and a character $\beta \colon \mathfrak g \to \C$ the \emph{Chevalley--Eilenberg--Euler--Koszul complex} of $\mathcal D_Z$-modules
  \[
  \cC^\bullet(\cM,\beta) = \mathcal D_Z \otimes_{\O_X} \cM\{\beta\} \otimes_\C \bigwedge^{-\bullet} \mathfrak g
  \]
that represents $\mathcal D_Z \otimes_{\cA_Z}^\mathbb{L} \cM\{\beta\}$, where $\cA_Z \coloneqq \O_Z \otimes \cU(\mathfrak g)$ and
\begin{equation} \label{eq:notationWithBrackets}
\cM\{\beta\} \coloneqq \cM \otimes_{\cO_Z} \cA_Z/\cA_Z(\xi-\beta(\xi) \mid \xi \in \mathfrak g).
\end{equation}
This complex computes the $\mathfrak g$-Lie algebra homology of the $(\mathcal D_Z,\mathfrak g)$-bimodule $\mathcal D_Z \otimes_{\O_Z} \cM \otimes_\C \C_\beta$,  generalizes the Euler--Koszul homology for toric modules form \cite{MMW}, and is a special case of a more general Chevalley--Eilenberg type complex $\cS^\bullet_{\cR|\cU}(\cM)$ in the context of Lie algebroids, see \cite[Definition~2.5]{GS-Duality}.

\begin{lem}\label{lem:DirectImCCompl}
  Let $i \colon Z_1 \hookrightarrow Z_2$ be a $G$-equivariant closed embedding between smooth $G$-varieties. Then
  \[i_+ \cC^\bullet(\cM \otimes_{\O_{Z_1}} \omega_{Z_1}^\vee, \beta) \cong \cC^\bullet(i_* \cM  \otimes_{\O_{Z_2}} \omega_{Z_2}^\vee, \beta)\]
  in $D_{qc}^b(\mathcal D_{Z_2})$ for every $G$-equivariant $\cO_{Z_1}$-module $\cM$. Phrased differently,
  \[i_+ \cC^\bullet(\cM, \beta) \cong \cC^\bullet(i_* (\cM  \otimes_{\O_{Z_1}} \omega_{Z_1|Z_2}), \beta).\]
\end{lem}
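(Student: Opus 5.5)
We will work termwise on the explicit complex $\cC^\bullet(\cM,\beta)=\cD_Z\otimes_{\O_Z}\cM\{\beta\}\otimes_\C\bigwedge^{-\bullet}\mathfrak g$ and reduce the statement to a compatibility of the direct image with the left $\cD$-module structures coming from the $\mathfrak g$-action. The two formulations are equivalent because $\omega_{Z_1|Z_2}=\omega_{Z_1}\otimes i^*\omega_{Z_2}^\vee$, so by the projection formula $i_*(\cM\otimes\omega_{Z_1}^\vee)\otimes\omega_{Z_1|Z_2}\cong i_*\cM\otimes\omega_{Z_2}^\vee$. It therefore suffices to prove the first version.

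\textbf{Step 1: transport to right modules.} The side-changing functor $\omega_Z\otimes_{\O_Z}-$ sends $\cD_Z\otimes_{\O_Z}(\cM\otimes\omega_Z^\vee)$ to $\cM\otimes_{\O_Z}\cD_Z$, the induced right module. The right action is twisted by the divergence/trace of the $\mathfrak g$-action; this twist is exactly what the factor $\omega^\vee$ absorbs, and $\cM\otimes\omega_Z^\vee$ inherits a natural $G$-equivariant structure. For right modules the direct image along a closed embedding is
\[
i_+\mathcal N=i_*(\mathcal N\otimes_{\cD_{Z_1}}\cD_{Z_1\to Z_2}),\qquad \cD_{Z_1\to Z_2}=\O_{Z_1}\otimes_{i^{-1}\O_{Z_2}}i^{-1}\cD_{Z_2}.
\]
This functor is exact for closed embeddings. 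Since the $\cC^\bullet$ are complexes of induced modules, applying $i_+$ termwise computes the derived direct image.

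\textbf{Step 2: termwise identification.} We obtain
\[
(\cM\otimes_{\O_{Z_1}}\cD_{Z_1})\otimes_{\cD_{Z_1}}\cD_{Z_1\to Z_2}\cong \cM\otimes_{i^{-1}\O_{Z_2}}i^{-1}\cD_{Z_2},
\]
so $i_*$ of it is $i_*\cM\otimes_{\O_{Z_2}}\cD_{Z_2}$. The factor $\bigwedge^{k}\mathfrak g$ is carried along unchanged, giving a termwise isomorphism with the right-module version of $\cC^\bullet(i_*\cM\otimes\omega_{Z_2}^\vee,\beta)$.

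\textbf{Step 3: compatibility with the differentials (the main obstacle).} The Chevalley--Eilenberg differential involves the operators $\xi\otimes1-1\otimes Z(\xi)$ for $\xi\in\mathfrak g$, together with the bracket terms on $\bigwedge\mathfrak g$. The bracket terms are obviously compatible. For the others, we must check that under the isomorphism of Step 2 the vector field $Z_{Z_1}(\xi)$, acting on $\cD_{Z_1\to Z_2}$ from the left, corresponds to $Z_{Z_2}(\xi)$, and that the trace corrections on the two sides agree.

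The first point holds because $i$ is equivariant, so $Z_{Z_2}(\xi)$ is tangent to $Z_1$ and restricts to $Z_{Z_1}(\xi)$. Consequently $1\otimes Z_{Z_2}(\xi)=Z_{Z_1}(\xi)\cdot(1\otimes1)$ in $\cD_{Z_1\to Z_2}$.

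For the second point, the Lie derivative on $\omega_{Z_1}$ differs from the one on $i^*\omega_{Z_2}$ by the action on the conormal determinant $\omega_{Z_1|Z_2}$. The $\omega^\vee$ twists are built precisely so that the $\mathfrak g$-action on $\cM\otimes\omega^\vee$, after side-change, is the naive action on $\cM$ on both sides. I would verify this locally in coordinates $x_1,\dots,x_n$ with $Z_1=\{x_{m+1}=\dots=x_n=0\}$, writing $Z(\xi)=\sum a_j\partial_j$ with $a_j\in(x_{m+1},\dots,x_n)$ for $j>m$, and checking that the divergence terms cancel.

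Once the differentials match, the termwise isomorphisms assemble into an isomorphism of complexes. This yields the claimed isomorphism in $D^b_{qc}(\cD_{Z_2})$.
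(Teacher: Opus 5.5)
Your argument is correct, but it takes a different route from the paper. The paper never touches the explicit complex. It identifies $\cC^\bullet(\cM,\beta)\cong\cD_Z\otimes^{\mathbb L}_{\cA_Z}\cM\{\beta\}$ with the Lie algebroid direct image along the anchor $(Z,\O_Z\otimes\mathfrak g)\to(Z,\Theta_Z)$. It then applies Chemla's functoriality of direct images to the commutative square formed with $i$. Finally it observes that along $(Z_1,\O_{Z_1}\otimes\mathfrak g)\to(Z_2,\O_{Z_2}\otimes\mathfrak g)$ the direct image of right modules is just $i_*$, so the twist by $\omega_{Z_1|Z_2}$ comes entirely from passing between left and right modules with the $\omega$-convention.

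Your Steps 2--3 are in effect a hands-on proof of this special case of functoriality. The identity $Z_{Z_1}(\xi)\cdot(1\otimes 1)=1\otimes Z_{Z_2}(\xi)$ in $\cD_{Z_1\to Z_2}$ is exactly what gives $\cD_{Z_1\to Z_2}\cong\cA_{Z_1}\otimes_{i^{-1}\cA_{Z_2}}i^{-1}\cD_{Z_2}$ as bimodules. Your ``trace corrections'' are the paper's left/right conventions. Your version is self-contained and shows where the twist comes from. The paper's version is two lines long but rests on Chemla's theorem and on identifying $\cC^\bullet$ with an algebroid direct image.

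Three small points would tighten your write-up.
\begin{itemize}
\item In the reduction between the two formulations, the twist belongs inside $i_*$. Read it as $i_*(\cM\otimes\omega_{Z_1}^\vee\otimes\omega_{Z_1|Z_2})\cong i_*(\cM\otimes i^*\omega_{Z_2}^\vee)\cong i_*\cM\otimes\omega_{Z_2}^\vee$, using that $\omega_{Z_1}^\vee\otimes\omega_{Z_1|Z_2}\cong i^*\omega_{Z_2}^\vee$ equivariantly.
\item No coordinate divergence computation is needed for the first formulation. Since $\omega_Z\otimes\omega_Z^\vee\cong\O_Z$ equivariantly, the right $\cA_Z$-module $\omega_Z\otimes(\cN\otimes\omega_Z^\vee)$ is just $\cN$ with $n\cdot\xi=-\xi n$ (plus the $\beta$-shift). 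This holds on $Z_1$ and $Z_2$ separately, so comparing $\omega_{Z_1}$ with $i^*\omega_{Z_2}$ only enters when you pass to the second formulation.
\item Step 1 is applied to complexes, not only to their terms. You should therefore state, and it is a routine check, that the side-change isomorphism $\omega_Z\otimes_{\O_Z}(\cD_Z\otimes_{\O_Z}\cN)\cong(\omega_Z\otimes_{\O_Z}\cN)\otimes_{\O_Z}\cD_Z$ carries the left Chevalley--Eilenberg differential to the right one. The latter is $n\otimes P\mapsto n\xi\otimes P-n\otimes Z(\xi)P$ plus the bracket terms.
\end{itemize}
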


\begin{proof}
  From a viewpoint of Lie algebroids, we note that $\cC^\bullet(\cM, \beta) \cong \mathcal D_Z \otimes_{\cA_Z}^\mathbb{L} \cM\{\beta\}$ is the direct image (as discussed in \cite{Che99}) under the morphism of Lie algebroids $(Z, \O_Z \otimes \mathfrak g) \to (Z, \Theta_Z)$ of the left $\cA_Z$-module $\cM\{\beta\}$. The commutative diagram of morphisms of Lie algebroids
   \[\begin{tikzcd}
       (Z_1,\ \O_{Z_1}\otimes \mathfrak g) \ar{r} \ar{d} &
       (Z_2,\ \O_{Z_2} \otimes \mathfrak g) \ar{d} \\
       (Z_1,\ \Theta_{Z_1}) \ar{r} &
       (Z_2,\ \Theta_{Z_2})
     \end{tikzcd}\]
  and the functoriality of the direct image \cite[Proposition~3.1.1]{Che99} imply that $i_+ \cC^\bullet(\cM,\beta) \cong \cC^\bullet(\cN,\beta)$, where $\cN$ is the left $\cA_{Z_1}$-module direct image under the Lie algebroid morphism $(Z_1,\cO_{Z_1} \otimes \mathfrak g) \to (Z_2, \cO_{Z_2} \otimes \mathfrak g)$ induced by the closed embedding $i$. On the level of right modules, this direct image is simply given by $i_*$, as can be checked directly from the definition. On the level of left modules this translates to $\cN \cong i_*(\omega_{Z_1} \otimes_{\cO_{Z_1}} \cM) \otimes_{\cO_{Z_2}} \omega_{Z_2}^\vee \cong i_*(\cM \otimes_{\cO_{Z_1}} \omega_{Z_1} \otimes_{\cO_{Z_1}} i^* \omega_{Z_2}^\vee) = i_*(\cM \otimes_{\cO_{Z_1}} \omega_{X_1|Z_2})$.
                  \end{proof}

\begin{rmk}
  In the proof above, we have used a direct image functor for left modules over Lie algebroids, for which different conventions are possible. In \cite{Che99}, the direct image for right modules over Lie algebroids is defined in a natural way. The passage to a direct image functor for left modules relies on fixing, for each of the Lie algebroids $(X,\Ell)$ involved, one right $\cU(\Ell)$-module $\mathcal E$ that underlies a rank one locally free $\O_X$-module. (Then the functors $\mathcal E \otimes_{\O_X} (\cdot)$ and $\sheafHom_{\O_X}(\cdot,\cE) =(\cdot) \otimes_{\cO_X} \cE^\vee$ give an equivalence of categories between left $\cU(\Ell)$-modules and right $\cU(\Ell)$-modules, see \cite[Theorem~2.2.2]{Che99}.) For such a right module $\cE$, there are two natural intrinsic choices: $\omega_X$ or $\bigwedge^{\rk \Ell} \Ell^\vee$ (which agree for $(X,\Theta_X)$). The statements in the proof above use the convention with $\cE = \omega_X$.
\end{rmk}

As a special case, we recall the following definition from \cite{GS-Duality}:
\begin{dfn}\label{def:HatT}
  Fix a representation $\rho \colon G \to \GL(V)$, the equivariant $\O_V$-module $\overline{i}_*\O_{\overline{Y}}$ for some $G$-orbit closure $\overline{Y} \xhookrightarrow{\overline{i}} V$, and a character $\beta \colon \mathfrak g \to \C$. Then define
  \[
  \hat T(\rho, \overline Y, \beta) \coloneqq \cC^\bullet(\overline{i}_* \cO_{\overline Y},\beta') \quad\text{where}\quad \beta' \coloneqq \trace \circ \differential\rho - \beta,
  \]
viewed as an element of the derived category $D^b_{qc}(\mathcal D_V)$.
\end{dfn}

Then $H^0 \hat T(\rho, \overline Y, \beta)$ is the Fourier-transformed tautological system $\hat \tau(\rho, \overline Y, \beta)$ discussed in \cite{GRSSW}. When restricted to the complement of the boundary orbits, as seen in \cite[displayed formula in proof of Theorem 4.28]{GRSSW}, it can be described as
\[\restr{\hat \tau(\rho, \overline Y, \beta)}{V \setminus \partial Y} = i_+ (\mathcal D_Y \otimes_{\mathcal A_Y} (\omega_Y\{\beta\})^\vee),\]
where, for the open $G$-orbit $Y$ in $\overline Y$, we denote $\partial Y := \overline Y \setminus Y$ and consider $i \colon Y = \overline Y \setminus \partial Y \hookrightarrow V \setminus \partial Y$, the closed embedding of $Y$ into $V \setminus \partial Y$. The derived statement remains true:

\begin{prop}\label{prop:ThatRestricted}
  Let $\partial Y \coloneqq \overline Y \setminus Y$ and $i \colon Y \hookrightarrow V \setminus \partial Y$. Then
  \[\restr{\hat T(\rho,\overline{Y},\beta)}{V \setminus \partial Y} = i_+ (\mathcal D_Y \otimes^\mathbb{L}_{\mathcal A_Y} (\omega_Y\{\beta\})^\vee).\]
\end{prop}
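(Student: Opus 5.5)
The plan is to reduce everything to \cref{lem:DirectImCCompl}, applied to the closed embedding $i\colon Y\hookrightarrow U\coloneqq V\setminus\partial Y$.

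First, restriction to the open set $U$ commutes with forming the complex. $\hat T(\rho,\overline Y,\beta)=\cC^\bullet(\overline{i}_*\cO_{\overline Y},\beta')$ is an explicit complex $\cD_V\otimes_{\cO_V}\overline{i}_*\cO_{\overline Y}\{\beta'\}\otimes_\C\bigwedge^{-\bullet}\fg$, and $U$ is $G$-stable, so the action map $Z_V$ restricts to $Z_U$. Hence
\[
\restr{\hat T(\rho,\overline Y,\beta)}{U}\cong\cC^\bullet\bigl(\restr{(\overline{i}_*\cO_{\overline Y})}{U},\beta'\bigr)=\cC^\bullet(i_*\cO_Y,\beta'),
\]
using $\overline Y\cap U=Y$.

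Second, I would rewrite $i_*\cO_Y$ so that \cref{lem:DirectImCCompl} applies. On $U$ we have $\omega_U\cong\cO_U$, but the $G$-equivariant structure on $\omega_U^\vee$ is twisted by the character $\trace\circ\differential\rho$. Concretely, a generator $\xi\in\fg$ acts on $dx_1\wedge\dots\wedge dx_n$ through the Lie derivative, producing $-\trace(\differential\rho(\xi))$, with the sign to be checked. So as equivariant modules $i_*\cO_Y\otimes_{\cO_U}\omega_U^\vee\cong i_*\cO_Y\otimes\C_{\trace\circ\differential\rho}$, and twisting $\cM$ by a character shifts the parameter: $\cC^\bullet(\cM\otimes\C_\chi,\beta)=\cC^\bullet(\cM,\beta+\chi)$ up to sign conventions. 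This gives
\[
\cC^\bullet(i_*\cO_Y,\beta')\cong\cC^\bullet\bigl(i_*(\cO_Y\otimes\omega_Y\otimes\omega_Y^\vee)\otimes_{\cO_U}\omega_U^\vee,\ \beta'-\trace\circ\differential\rho\bigr)=\cC^\bullet\bigl(i_*(\omega_Y^\vee)\otimes\omega_U^\vee,-\beta\bigr).
\]
The first form of \cref{lem:DirectImCCompl}, with $\cM=\cO_Y$ twisted suitably, then yields $i_+\cC^\bullet(\cO_Y\otimes\omega_Y^\vee,-\beta)$ on $Y$.

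Third, I would identify $\cC^\bullet(\omega_Y^\vee,-\beta)=\cD_Y\otimes^{\dL}_{\cA_Y}\omega_Y^\vee\{-\beta\}$ with $\cD_Y\otimes^{\dL}_{\cA_Y}(\omega_Y\{\beta\})^\vee$. This should follow from $(\cM\{\beta\})^\vee=\cM^\vee\{-\beta\}$ for rank-one equivariant line bundles, with $\fg$ acting on duals by the negative transpose. This matches the underived formula quoted from \cite{GRSSW}, which serves as a consistency check for the signs.

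The main obstacle is bookkeeping: the sign and normalization of the character twists coming from $\omega_U^\vee$, $\beta'$, and the left/right conventions of \cref{lem:DirectImCCompl}. I would fix these by checking the case of $H^0$ against the known underived statement, and then note that every identification above holds at the level of complexes, so the derived statement follows.
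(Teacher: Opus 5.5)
Your proposal is correct and is essentially the paper's proof. You restrict to $U$ to get $\cC^\bullet(i_*\O_Y,\beta')$, then use $\omega_U^\vee\cong\O_U\{\trace\circ\differential\rho\}$ to rewrite this as $\cC^\bullet(i_*\O_Y\otimes_{\O_U}\omega_U^\vee,-\beta)$; the paper takes that isomorphism from \cite[Lemma~4.4]{GS-Duality}, which fixes the sign you left open. You then apply the first formula of \cref{lem:DirectImCCompl} with $\cM=\O_Y$ and parameter $-\beta$, and the identification with $\cD_Y\otimes^{\mathbb L}_{\cA_Y}(\omega_Y\{\beta\})^\vee$ holds by definition.

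One slip: the last term of your displayed chain should be $\cC^\bullet(i_*\O_Y\otimes_{\O_U}\omega_U^\vee,-\beta)$, not $\cC^\bullet(i_*(\omega_Y^\vee)\otimes\omega_U^\vee,-\beta)$. This is because $\O_Y\otimes\omega_Y\otimes\omega_Y^\vee\cong\O_Y$, and the lemma is applied with $\cM=\O_Y$ untwisted.
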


\begin{proof}
  Note that $\restr{\hat T(\rho, \overline Y, \beta)}{V \setminus \partial Y} = \cC^\bullet(i_*\O_Y, \beta')$.
    Then \cref{lem:DirectImCCompl}
  (applying its first formula to the module $\cM = \O_Y$ and parameter $-\beta$) shows
  that
  \[i_+ \cC^\bullet(\omega_Y^\vee,-\beta) \cong \cC^\bullet(i_*\O_Y \otimes \omega_{V\setminus \partial Y}^\vee, -\beta) \cong \cC^\bullet(i_*\O_Y, \trace \circ \differential \rho -\beta) = \cC^\bullet(i_*\O_Y, \beta'),\]
  where the middle isomorphism is due to $\omega_{V \setminus \partial Y}^\vee \cong \O_{V \setminus \partial Y} \{\trace \circ \differential\rho\}$ by \cite[Lemma~4.4]{GS-Duality}.
        Since $\cC^\bullet(\omega_Y^\vee,-\beta) = \cD_Y \otimes_{\cA_Y}^\mathbb{L} (\omega_Y\{\beta\})^\vee$ by definition, this shows the claim.
\end{proof}

\cref{prop:ThatRestricted} describes $\hat T(\rho, \overline Y, \beta)$ outside the boundary orbits $\partial Y$ of the orbit closure $\overline Y$ (and we give a more detailed description of $\cD_Y \otimes_{\cA_Y}^\mathbb{L} (\omega_Y\{\beta\})^\vee$ below in \cref{sec:Orbit}). Understanding the behavior along the boundary orbits is in general a subtle problem (see for example \cite{SW09} in the context of toric varieties and GKZ systems or \cite{NarvaezSevenheck} for certain free divisors). A particular question for given $G$-orbit closures $\overline Y \subseteq V$ is to understand when
\[
 \hat T = \hat T(\rho, \overline Y, \beta) \text{ is } \begin{cases} \text{localized in the sense that } &\hat T = j_+ (\restr{\hat T}{V \setminus \partial Y}), \\
\text{colocalized in the sense that } &\hat T = j_\dag (\restr{\hat T}{V \setminus \partial Y})
\end{cases}
\]
for $j \colon V \setminus \partial Y \hookrightarrow V$.
We now discuss this (co-)localization property for a special class of the complexes $\hat{T}$ that arise from affine cones over homogeneous spaces.
\begin{nota}
For the remainder of this section, we consider
a projective homogeneous space $X = G_0/P_0$, a very ample $G_0$-equivariant line bundle $\Ell$ on $X$ and the embedding $X \hookrightarrow \P V$ induced by the complete linear system $|\Ell|$. Furthermore, $V$ will be dual to the corresponding vector space of sections, $\hat X$ the affine cone over $X$ in $V$.
Finally, set
$G \coloneqq  \C^* \times G_0$ with associated Lie algebras $\mathfrak g =\C \mathbf e \oplus \mathfrak g_0$. Note that $Y := \hat X \setminus \{0\}$ is a $G$-orbit in $V$ and its orbit closure is $\overline Y =\hat X$.
\end{nota}

The first result in this context is a preliminary vanishing statement needed in order to show \cref{prop:FunctConstrHatT} below. It is an upgraded version of the direct computations done in \cite[Proof of Proposition~6.7]{GRSSW}. Here we use Lie theoretic methods (and we thank Avi Steiner for communicating the proof idea to us).
\begin{prop}\label{prop:Acyclic}
  Assume that $\dim X > 0$; then                 \[R\Gamma(V,\omega_V \otimes^\mathbb{L}_{\mathcal A_V} \O_{\hat X}\{\beta'\}) \cong
  \begin{cases}
  0 &\text{if } \beta \neq 0, \\
  H_{-\bullet}(G,\C) &\text{if } \beta = 0,
  \end{cases}\]
  using the notation \eqref{eq:notationWithBrackets} and the definition of $\beta'$ from \cref{def:HatT}.
\end{prop}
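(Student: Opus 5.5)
Since $V$ is affine, $R\Gamma(V,-)$ is just taking global sections. The plan is to first rewrite the object as a Lie algebra homology and then compute it using the grading and representation theory.

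\textbf{Step 1: rewrite as Lie algebra homology.} Globally on $V$ we have $\omega_V\cong\O_V$ as an $\O_V$-module. As a right $\cA_V$-module, however, $\mathfrak g$ acts on $\omega_V$ through $-\mathcal L_{Z_V(\xi)}$, which differs from the trivial action by $\trace\circ\differential\rho$. Resolving $\O_{\hat X}\{\beta'\}$ by the Chevalley--Eilenberg resolution $\cA_V\otimes_\C\bigwedge^\bullet\mathfrak g$, we get
\[
\Gamma(V,\omega_V\otimes^{\mathbb L}_{\cA_V}\O_{\hat X}\{\beta'\})\;\simeq\; C_\bullet\bigl(\mathfrak g,\ \C[\hat X]\otimes\C_{\chi}\bigr),
\]
the Chevalley--Eilenberg complex computing $H_\bullet(\mathfrak g, R\otimes\C_\chi)$. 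Here $R=\C[\hat X]=\bigoplus_{m\ge0}H^0(X,\Ell^m)$, and $\chi$ is the character obtained by combining the twists. I expect the trace terms to cancel, so that $\chi=\pm\beta$. The first point to check carefully is this sign and normalisation bookkeeping, using \cite[Lemma~4.4]{GS-Duality} just as in the proof of \cref{prop:ThatRestricted}.

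\textbf{Step 2: use the grading element $\mathbf e$.} The generator $\mathbf e$ of the central factor $\C\mathbf e$ acts semisimply on $R\otimes\C_\chi$, with eigenvalue $\pm m+\chi(\mathbf e)$ on the degree $m$ piece. Since $\mathbf e$ is central, Lie algebra homology decomposes over these eigenvalues, and every summand with nonzero eigenvalue is acyclic: the action of $\mathbf e$ is null-homotopic on $C_\bullet$ via contraction with $\mathbf e$ (Cartan's formula), yet it is multiplication by a nonzero scalar. Hence only the degree $m$ with $\pm m=-\chi(\mathbf e)$ survives. If no such $m\ge0$ exists, the complex vanishes.

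\textbf{Step 3: reduce to the reductive part $\mathfrak g_0$.} For the surviving $m$, the component $\mathbf e$ acts by zero, so by Künneth
\[
H_\bullet(\mathfrak g, R_m\otimes\C_\chi)\cong H_\bullet(\C,\C)\otimes H_\bullet(\mathfrak g_0, R_m\otimes\C_{\chi_0}).
\]
The group $G_0$ is reductive and acts on the finite-dimensional space $R_m\otimes\C_{\chi_0}$; here $\chi_0$ must integrate to a character of $G_0$, and otherwise the module is not a $G_0$-module at all. Let $\mathfrak g_0=\mathfrak z\oplus[\mathfrak g_0,\mathfrak g_0]$. The homology of a nontrivial irreducible representation is zero: the semisimple part is handled by Whitehead's lemma, or more precisely by the Casimir argument that the homology of a nontrivial simple module vanishes, and the centre $\mathfrak z$ is handled by the same central-element trick as in Step 2. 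Therefore the homology equals $H_\bullet(\mathfrak g_0,\C)\otimes(R_m\otimes\C_{\chi_0})^{\mathfrak g_0}$. Combined with the factor $H_\bullet(\C,\C)$, this gives $H_\bullet(\mathfrak g,\C)\otimes(\text{invariants})$, and $H_\bullet(\mathfrak g,\C)\cong H_\bullet(G,\C)$ for connected reductive $G$.

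\textbf{Step 4: count invariants.} It remains to show that the invariants are one-dimensional exactly when $\beta=0$ and vanish otherwise. If $\beta=0$, the surviving degree is $m=0$ and $R_0=\C$ is trivial, giving $H_{-\bullet}(G,\C)$. If $\beta\ne0$, either no degree survives, or the surviving degree is $m>0$. In the latter case $H^0(X,\Ell^m)$ is an irreducible $G_0$-module by Borel--Weil, of highest weight $m\lambda$ with $\lambda\neq0$ because $d_X>0$. It therefore has no one-dimensional constituent that could be cancelled by the twist $\C_{\chi_0}$, since a character restricted to the semisimple part is trivial. So the invariants vanish.

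The main obstacle I expect is Step 1, namely pinning down exactly which twisted character $\chi$ appears so that $\beta=0$ corresponds precisely to the trivial degree $0$ piece. Steps 2--4 are standard.
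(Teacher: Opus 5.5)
Your argument is correct in substance and follows the same outline as the paper's proof. Global sections of the Chevalley--Eilenberg complex compute $H_\bullet(\mathfrak g, S\otimes\C_\chi)$. The paper confirms your expectation in Step 1: the $\trace\circ\differential\rho$ contributions from $\omega_V$ and from $\beta'$ cancel, and $\chi=-\beta$. The sign is irrelevant for the conclusion, but the cancellation is not. The rest is Borel--Weil, vanishing of Lie algebra homology for non-trivial simple modules over a reductive Lie algebra, and $H_\bullet(\mathfrak g,\C)\cong H_\bullet(G,\C)$.

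The one organisational difference is your Step 2, which the paper does not need. For $d>0$, $S_d\otimes\C_{-\beta}$ is a simple $\mathfrak g$-module of dimension $\dim S_d>1$, hence non-trivial for \emph{every} $\beta$. Its homology therefore vanishes by \cref{lem:VanishLieHomReductive}. The only remaining piece is $S_0\otimes\C_{-\beta}=\C_{-\beta}$, which is trivial if and only if $\beta=0$. Your central-element trick with $\mathbf e$ is just the ``$\chi\neq0$'' case of that lemma applied in advance. It tells you which single degree could survive, but that information is never used.

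Three points in your write-up need repair.

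\textbf{Step 3, integrability.} You should not pass through $G_0$-modules. Here $\beta$ is an arbitrary Lie algebra character, and $\chi_0=\pm\restr{\beta}{\mathfrak g_0}$ need not integrate; for example, it can be arbitrary on the centre of $\mathfrak g_0$ when $G_0=\GL_n$. Nothing is lost by working with $\mathfrak g_0$ directly. $R_m$ is a semisimple $\mathfrak g_0$-module because it is a $G_0$-representation, and tensoring with a one-dimensional module preserves simplicity of summands. So complete reducibility, and hence your formula $H_\bullet(\mathfrak g_0,\C)\otimes(R_m\otimes\C_{\chi_0})^{\mathfrak g_0}$, hold at the Lie algebra level.

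\textbf{Step 4, missing case.} The case split is incomplete. If $\beta\neq0$ but $\beta(\mathbf e)=0$, the surviving degree is $m=0$, not $m>0$. There the invariants of $\C_{\chi_0}$ vanish because $\chi_0\neq0$.

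\textbf{Step 4, the right fact for $m>0$.} For reductive $G_0$, a non-zero highest weight $m\lambda$ does not by itself exclude a one-dimensional module. The relevant fact is $\dim H^0(X,\Ell^m)\ge 2$, which follows from $d_X>0$ and very ampleness. You also need not assume projective normality: $R_m$ is a non-zero $\mathfrak g$-submodule of the simple module $H^0(X,\Ell^m)$, hence equal to it.
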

Before entering the proof, let us recall the following well-known results from Lie theory that we will also use below in \cref{sec:Orbit}.

\begin{lem}\label{lem:VanishLieHomReductive}
Let $\fg$ be a reductive Lie algebra, and let $V$ be a simple non-trivial $\fg$-module. Then $H_k(\fg,V)=0$ for all $k$.
\end{lem}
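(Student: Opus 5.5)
The plan is to use the center of $\fg$ and a Casimir-type argument, combined with the Hochschild--Serre spectral sequence. Since $\fg$ is reductive, $\fg = \fa \oplus [\fg,\fg]$ with $\fa$ abelian (the center) and $\fs := [\fg,\fg]$ semisimple. A simple $\fg$-module $V$ is finite-dimensional in the intended setting, and by Schur's lemma $\fa$ acts on $V$ by a character $\lambda\colon \fa \to \dC$. We split into two cases according to whether $\lambda$ is zero.

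\emph{Case $\lambda \neq 0$.} Pick $z \in \fa$ with $\lambda(z) \neq 0$. The key observation is that the action of any $z\in\fg$ on the Chevalley--Eilenberg complex $C_\bullet(\fg,V) = \bigwedge^\bullet \fg \otimes V$, given by the adjoint action on $\bigwedge^\bullet\fg$ tensored with the action on $V$, is null-homotopic. This is the Cartan formula $L_z = d\iota_z + \iota_z d$, where $\iota_z$ is wedging with $z$ on the chain side. Hence $z$ acts by zero on $H_k(\fg,V)$. On the other hand, $z$ is central, so $\operatorname{ad} z = 0$ on $\bigwedge^\bullet \fg$, and $z$ therefore acts on $C_\bullet(\fg,V)$ by the scalar $\lambda(z)$. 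A multiplication by the nonzero scalar $\lambda(z)$ that induces zero on homology forces $H_k(\fg,V)=0$.

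\emph{Case $\lambda=0$.} Then $V$ is a simple nontrivial module of $\fs$ on which $\fa$ acts trivially. I would run the same argument with the Casimir element $\Omega\in Z(\cU(\fs))$ of a nondegenerate invariant form (e.g.\ Killing form) on $\fs$. Alternatively, use the Hochschild--Serre/Künneth decomposition
\[
H_\bullet(\fg,V) \cong H_\bullet(\fa,\dC)\otimes H_\bullet(\fs,V),
\]
which reduces the claim to $H_\bullet(\fs,V)=0$. Since $\Omega$ is central in $\cU(\fs)$, it acts on $V$ by a scalar $c$ (Schur). By the standard Harish-Chandra/Whitehead computation, $c \neq 0$ when $V$ is a nontrivial simple $\fs$-module: $c$ equals $(\mu,\mu+2\rho)$ for the highest weight $\mu \neq 0$. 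Since $Z(\cU(\fs))$ acts on Lie algebra homology through the trivial module, that is, it acts on $H_\bullet(\fs,V)$ by its value on $\dC$, $\Omega$ acts by $0$ on homology. The reason is the chain-level argument above, extended from elements of $\fs$ to their products: each generator of the action is homotopic to zero, and $\Omega$ lies in the augmentation ideal. At the same time $\Omega$ acts on the complex as multiplication by $c\neq 0$ on the $V$ factor after identifying the induced action. Together these give vanishing.

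The main subtlety is in this last step. One must make precise that $\Omega$ acting on $V$ alone (rather than diagonally on $\bigwedge\fs\otimes V$) induces zero on homology. The cleanest route is to use $H_\bullet(\fs,V)=\operatorname{Tor}^{\cU(\fs)}_\bullet(\dC,V)$. Here the center acts through either argument: on $V$ by $c$, and on $\dC$ by $0$. Both actions induce the same endomorphism of $\operatorname{Tor}$, so $c\cdot \mathrm{id}=0$ on homology. The same Tor argument also handles the first case uniformly, with $z$ in place of $\Omega$.
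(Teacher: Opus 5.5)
Your proof is correct. It uses the same case split as the paper: Schur's lemma gives a character $\chi$ by which the centre $\fa$ acts, and you treat $\chi\neq 0$ and $\chi=0$ separately. The mechanism in each case is different, though.

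\textbf{Case $\chi\neq0$.} The paper splits $\fa=\C a\oplus\mathfrak b$. It then applies the Künneth formula and uses that the two-term complex $V\xrightarrow{\chi(a)}V$ is acyclic. You instead show directly that multiplication by $\pm\lambda(z)$ is null-homotopic on the whole Chevalley--Eilenberg complex of $\fg$, via $h=z\wedge(-)$. The identity $dh+hd=\pm\lambda(z)\cdot\mathrm{id}$ holds precisely because $[z,\fg]=0$, so no Künneth bookkeeping is needed.

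\textbf{Case $\chi=0$.} The paper simply cites \cite[Th.~7.8.9]{Weibel}. You reprove that theorem with the Casimir, which is its standard proof.

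Your closing Tor argument is the cleanest way to handle both cases uniformly: a central element of $\cU(\fg)$ acts on $\operatorname{Tor}^{\cU(\fg)}_\bullet(\C,V)$ in the same way through either argument. It even makes the reduction to $\mathfrak{s}$ unnecessary. The Casimir of $\mathfrak{s}$ is already central in $\cU(\fg)=\cU(\fa)\otimes\cU(\mathfrak{s})$, it acts by $0$ on $\C$, and it acts by $c\neq 0$ on $V$, because $V$ restricted to $\mathfrak{s}$ is simple and nontrivial.

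Keep that version, and drop the sentence saying $\Omega$ acts ``on the complex as multiplication by $c$ on the $V$ factor''. The diagonal action of $\Omega$ on $\bigwedge^\bullet\mathfrak{s}\otimes V$ is not $1\otimes c$, since $\Omega$ acts nontrivially on $\bigwedge^\bullet\mathfrak{s}$ in general. So the chain-level homotopy argument does not transfer from central $z\in\fg$ to $\Omega$ as written.

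Both your argument and the paper's rely on $V$ being finite-dimensional: you need it for $c\neq0$, and the paper needs it for Weibel's theorem. This assumption is implicit in the statement, and it is satisfied in all of the paper's applications, where $V$ comes from a finite-dimensional representation.
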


\begin{proof}
  If $\mathfrak g$ is semisimple, this classical vanishing result can be found for example in \cite[Th.~7.8.9]{Weibel}. In general, since $\mathfrak g$ is reductive, we have $\mathfrak g \cong \mathfrak a \oplus \mathfrak g^{ss}$, where $\mathfrak a$ is abelian
    and $\mathfrak g^{ss}$ is semisimple.
    This gives
  \begin{equation} \label{eq:splittingReductiveLieHomology}
  H_k(\mathfrak g, V) \cong \bigoplus_{i+j=k} H_i(\fa,V) \otimes_\C H_j(\mathfrak g^{ss},V).
  \end{equation}
  Since $[\mathfrak a, \mathfrak g] = 0$ and since $V$ is simple, Schur's lemma implies that the action of $\mathfrak a$ is scalar multiplication on $V$ through some character $\chi \colon \mathfrak a \to \C$. (The action of each element of $\mathfrak a$ on $V$ is a $\mathfrak g$-module endomorphism which must by Schur's lemma be multiplication with some scalar.) We distinguish two cases.

  If $\chi \neq 0$,  pick $a \in \mathfrak a$ with $\chi(a) \neq 0$. Writing $\mathfrak a = \C a \oplus \mathfrak b$,
  \[
  H_k(\mathfrak a, V) \cong \bigoplus_{i+j=k} H_i(\C a,V) \otimes_\C H_j(\mathfrak b,V).
  \]
  But $H_\bullet(\C a,V)=H_\bullet(\dots \to 0 \to V \xrightarrow{\chi(a) \cdot } V \to 0 \to \dots)=0$, since $\chi(a) \in \C^*$, and we conclude $H_\bullet(\CC a,V)=0$ in this case.

  If $\chi = 0$, then $\mathfrak a$ acts trivially on $V$. Then $V$ being a simple non-trivial $\mathfrak g$-module means that $V$ is a simple non-trivial $\mathfrak g^{ss}$-module. Hence, $H_\bullet(\mathfrak g^{ss},V) = 0$ by \cite[Th.~7.8.9]{Weibel}.

  Thus, in both cases, at least one of  $H_\bullet(\mathfrak a, V)$ and $H_\bullet(\mathfrak g^{ss},V)$ vanishes. By \cref{eq:splittingReductiveLieHomology}, $H_\bullet(\mathfrak g, V) = 0$ in all cases.
\end{proof}

\begin{lem} \label{lem:VanishingLieHomReductiveGroup}
Let $V$ be a finite-dimensional representation of a connected reductive algebraic group $G$. Then $H_k(\fg,V) \cong H_k(G,\C) \otimes_\C V^G$ for all $k$. In particular, $H_k(\fg,V)=0$ for all $k$ if $V$ has no $G$-invariants.

\end{lem}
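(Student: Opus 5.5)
The plan is to decompose $V$ into isotypic components, dispose of the non-trivial ones with \cref{lem:VanishLieHomReductive}, and identify the homology of the trivial part with $H_k(G,\C)$.

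\emph{Decomposition.} Since $G$ is connected reductive, $V$ is completely reducible as a $G$-module and hence as a $\fg$-module. Write $V = V^G \oplus V'$, where $V'$ is the sum of the non-trivial simple summands. For a connected group, $V^G = V^{\fg}$ is the space annihilated by $\fg$. Lie algebra homology commutes with finite direct sums, so
\[
H_k(\fg,V)\cong H_k(\fg,V^G)\oplus \bigoplus H_k(\fg,V_i),
\]
where the $V_i$ are the non-trivial simple summands of $V'$. Each $V_i$ is a simple non-trivial $\fg$-module: it is $G$-simple, and for a connected group $G$-stable and $\fg$-stable subspaces coincide. Hence $H_k(\fg,V_i)=0$ by \cref{lem:VanishLieHomReductive}.

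\emph{Trivial part.} On $V^G$ the action is trivial, so the Chevalley--Eilenberg complex $\bigwedge^\bullet\fg\otimes V^G$ has differential $d\otimes \mathrm{id}$. This gives
\[
H_k(\fg,V^G)\cong H_k(\fg,\C)\otimes_\C V^G .
\]

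\emph{Identification with group homology.} It remains to show $H_k(\fg,\C)\cong H_k(G,\C)$, which is the only substantive step. The plan is to combine three classical facts:
\begin{itemize}
\item $H_k(\fg,\C)$ is dual to $H^k(\fg,\C)$, since $\fg$ is finite-dimensional.
\item For a compact form $K\subset G$, the relative Lie algebra cohomology $H^\bullet(\fg,\C)$ is the cohomology of left-invariant forms on $K$, and by averaging (Chevalley--Eilenberg) this equals $H^\bullet_{dR}(K,\C)$.
\item $G$ deformation retracts onto $K$ via the polar/Iwasawa decomposition $G\simeq K\times\R^N$.
\end{itemize}
Together these give $H^k(\fg,\C)\cong H^k(G,\C)$, and dualizing yields $H_k(\fg,\C)\cong H_k(G,\C)$.

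\emph{Conclusion.} Assembling the pieces gives $H_k(\fg,V)\cong H_k(G,\C)\otimes_\C V^G$. If $V^G=0$, every term vanishes, which is the second assertion.

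The main point needing care is the interpretation of $H_k(G,\C)$. If it denotes the topological homology of $G$, the argument is the classical one above, citing e.g.\ Chevalley--Eilenberg or Hochschild--Serre. The rest is formal.
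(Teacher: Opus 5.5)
Your proposal is correct and follows the paper's proof essentially step by step: complete reducibility, simple $G$-modules being simple $\fg$-modules by connectedness, vanishing on the non-trivial summands via \cref{lem:VanishLieHomReductive}, and $H_\bullet(\fg,\C)\cong H_\bullet(G,\C)$ via a compact form $K$, the Chevalley--Eilenberg theorem and $G\simeq K\times\R^N$. The differences are cosmetic: you dualize to Lie algebra cohomology before passing to $K$, whereas the paper works with $H^\bullet(\mathfrak k,\R)$, dualizes and then complexifies; also, what you call ``relative'' Lie algebra cohomology is just ordinary (absolute) Lie algebra cohomology.
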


\begin{proof}
Reductive groups over $\C$ are linearly reductive (i.e., their finite-dimensional representations decompose as direct sums of irreducible representations, see \cite[Corollary~22.43]{Milne}), and so we may write $V =\bigoplus_{i=1}^m V_i$ with $V_i$ irreducible. We may assume $V_i$ is non-trivial for $i \leq r$ and $V_i = \C$ for $i>r$, then $V = V^G \oplus \bigoplus_{i=1}^r V_i$. Irreducible $G$-representations are simple $\mathfrak g$-modules (this can be checked by viewing $G$ as a connected complex Lie group, and then it follows using the exponential map, see, e.g., \cite[Corollary~3.47]{Hall}). In particular, $H_k(\fg, V_i) = 0$ for $i \leq r$ by \cref{lem:VanishLieHomReductive}. Hence,
\[H_k(\fg, V) \cong H_k(\fg, V^G) \oplus \bigoplus_{i=1}^r H_k(\fg, V_i) = H_k(\fg, V^G) \cong H_k(\fg, \C) \otimes_\C V^G.\]

Finally, for the trivial representation $\C$, Lie algebra homology computes the homology of $G$, i.e.\ $H_k(\mathfrak g, \C) \cong H_k(G,\C)$, which can be seen as follows: Since $G$ is reductive, there is a compact real Lie group $K$ with $\mathfrak g = \mathfrak k \otimes_\R \C$ such that $G$ is diffeomorphic to $K \times \mathfrak k$ with $\mathfrak k = \operatorname{Lie}(K)$ considered as an $\R$-vector space (see e.g.\ \cite[Theorem~1.9]{SalamonLieGroups} or
\cite[Chapter~5, §2, Theorem~8]{Onishchik-Vinberg}). For compact real Lie groups, the cohomology $H^i(K,\R)$ can be computed as Lie algebra cohomology  $H^\bullet(K,\R) \cong H^\bullet(\mathfrak k, \R)$ by the complex $\bigwedge^\bullet \mathfrak k^\vee$, this is a very classical result by Chevalley--Eilenberg, see \cite[Theorem~15.2]{ChevalleyEilenberg}. After dualizing, this implies $H_i(K,\R) \cong H_i(\mathfrak k,\R)$ and so
  \[
  H_\bullet(G,\C) \cong H_\bullet(K \times \R^k,\R) \otimes_\R \C \cong H_\bullet(K,\R) \otimes_\R \C \cong H_\bullet(\mathfrak k, \R) \otimes_R \C \cong H_\bullet(\mathfrak k \otimes_\R \C,\C) \cong H_\bullet(\mathfrak g, \C). \qedhere
  \]
\end{proof}

\begin{proof}[Proof of \cref{prop:Acyclic}]
The object $\omega_V \otimes^\mathbb{L}_{\mathcal A_V} \O_{\hat X}\{\beta'\}$ in $D^b(\C_V)$ is represented by the complex
\[\omega_V \otimes_{\O_V} \O_{\hat X}\{\beta'\} \otimes_{\O_V} \bigwedge_{\O_V}^{-\bullet} (\O_V \otimes \mathfrak g) \cong \omega_V \otimes_{\O_V} \O_{\hat X}\{\beta'\} \otimes_{\C} \bigwedge^{-\bullet} \mathfrak g,\]
denoted $\mathcal S^\bullet_{\C|\mathcal A_V}(\omega_V \otimes_{\O_V} \O_{\hat X}\{\beta'\})$ in \cite{GS-Duality} (see specifically \cite[Definition 2.5]{GS-Duality}). This follows from \cite[Lemma~2.10 and 2.12]{GS-Duality}, see also \cite[Lemma 6.12]{GRSSW}.
Notice in particular that the terms of $\mathcal S^\bullet_{\C|\mathcal A_V}(\omega_V \otimes_{\O_V} \O_{\hat X}\{\beta'\})$ are $\cO_V$-coherent (though the differentials are not $\cO_V$-linear) and therefore $\Gamma(V,\cdot)$-acyclic, hence
$$
R\Gamma(V,\omega_V \otimes^\mathbb{L}_{\mathcal A_V} \O_{\hat X}\{\beta'\})
= \Gamma(V,\mathcal S^\bullet_{\C|\mathcal A_V}(\omega_V \otimes_{\O_V} \O_{\hat X}\{\beta'\})).
$$
The cohomology of the complex $\Gamma(V,\mathcal S^\bullet_{\C|\mathcal A_V}(\omega_V \otimes_{\O_V} \O_{\hat X}\{\beta'\}))$ is the Lie algebra homology of the right $\mathfrak g$-module $\Gamma(V,\omega_V \otimes_{\O_V} \O_{\hat X}\{\beta'\})$, see \cite[Remark 2.8]{GS-Duality}.

From the definitions of the $\mathfrak g$-action, one verifies that the corresponding left $\mathfrak g$-module
is
\[
\Gamma(V,(\omega_V \otimes_{\O_V} \O_{\hat X}\{\beta'\}))^\text{left} \cong \Gamma(V,\O_{\hat X}\{\beta'-\trace \circ \differential\rho\}) = \Gamma(V,\O_{\hat X}\{-\beta\}) \cong S \otimes \C_{-\beta},
\]
where
$S \coloneqq \Gamma(V,\O_{\hat X})$ is the homogeneous coordinate ring of $X$ embedded into $\P V$ by the complete linear system $|\Ell|$ and
\[\C_{-\beta}\coloneqq \mathcal{U}(\mathfrak{g})/\mathcal U(\mathfrak g)(\xi+\beta(\xi)\mid \xi\in \mathfrak{g})\]
is the one-dimensional $\mathfrak g$-module corresponding to the character $-\beta$.
Hence,
\begin{equation} \label{eq:reductionToLieHomology}
R\Gamma(V,\omega_V \otimes^\mathbb{L}_{\mathcal A_V} \O_{\hat X}\{\beta'\}) \cong H_{-\bullet}(\mathfrak g, S \otimes \C_{-\beta}),
\end{equation}
and we wish to show the vanishing of this Lie algebra homology.
Decomposing $S$ into its graded pieces $S = \bigoplus_{d \in \N} S_d$, we get a decomposition of left $\mathfrak g$-modules
\[S \otimes \C_{-\beta} = \bigoplus_{d \in \N} S_d \otimes \C_{-\beta}.\]

First, consider the case $d > 0$. We have $S \subseteq \bigoplus_{d \in \N} \Gamma(X,\Ell^{\otimes d})$ as $\mathfrak g$-modules, so  $S_d$ is a $\mathfrak{g}$-submodule of $\Gamma(X,\Ell^{\otimes d})$. Since $\Ell$ is an equivariant line bundle on a projective homogeneous space $X$, the theorem of Borel--Weil (see, e.g., \cite[Cor.~to Th.~V]{Bot57}) implies that $\Gamma(X,\Ell^{\otimes d})$ is a simple $\mathfrak{g}$-module. In particular, $S_d$ must in fact agree with $\Gamma(X,\Ell^{\otimes d})$---whence  the embedding of $X$ into $\P V$ is projectively normal---and must be a simple $\mathfrak{g}$-module. As $\dim \C_{-\beta} = 1$,   $S_d \otimes_\C \C_{-\beta}$ is also  a simple $\mathfrak g$-module. From $\dim X > 0$ and $d > 0$, we can conclude that $\dim_\C (S_d \otimes_\C \C_{-\beta}) = \dim_\C S_d > 1$, so this is not the trivial $\mathfrak g$-module. By \cref{lem:VanishLieHomReductive}, this shows
\begin{equation} \label{eq:positiveDegVanishing}
H_\bullet(\mathfrak g, S_d \otimes \C_{-\beta}) \neq 0.
\end{equation}

For $d = 0$, note that $S_0 \otimes \C_{-\beta} = \C_{-\beta}$ is a one-dimensional (hence simple) $\mathfrak g$-module, on which $\mathfrak g$ acts through $\beta$. By \cref{lem:VanishLieHomReductive}, this shows vanishing of its Lie algebra homology for $\beta \neq 0$, while in the case $\beta = 0$, we deal with the trivial representation, so Lie algebra homology computes singular homology $H_\bullet(G, \C)$ by \cref{lem:VanishingLieHomReductiveGroup}. From this we now conclude
\[
R\Gamma(V,\omega_V \otimes^\mathbb{L}_{\mathcal A_V} \O_{\hat X}\{\beta'\}) \cong H_{-\bullet}(\mathfrak g, S \otimes \C_{-\beta}) = H_{-\bullet}(\mathfrak g, S_0 \otimes \C_{-\beta}) =
\begin{cases}
  H_{-\bullet}(G, \C) &\text{if } \beta = 0, \\
  0 &\text{if }\beta \neq 0 \hspace{2em}
\end{cases}
\]
via
Equations~\eqref{eq:reductionToLieHomology} and \eqref{eq:positiveDegVanishing}.
\end{proof}

Using
\cref{prop:Acyclic}, we obtain the following generalization of \cite[Theorem 6.3]{GRSSW}:
\begin{prop} \label{prop:FunctConstrHatT}
  Assume $\dim X > 0$. For $\beta \neq 0$,
  $\hat T(\rho, \hat X, \beta)$ is colocalized in the sense that
  \[\hat T(\rho, \hat X, \beta) \cong j_\dag j^+ \hat T(\rho, \hat X, \beta)\]
  in $D^b_{qc}(\mathcal{D}_V)$.
  More precisely, for $\beta \neq 0$, we have an isomorphism   \[\hat T(\rho,\hat X, \beta) \cong \iota_\dag (\mathcal D_{\hat X \setminus \{0\}} \otimes^{\mathbb{L}}_{\mathcal A_{\hat X \setminus \{0\}}} (\omega_{\hat X \setminus \{0\}}\{\beta\})^\vee),\]
  where $\iota \colon \hat X \setminus \{0\} \hookrightarrow V$.

  If $\hat X$ is Gorenstein, then for $\beta = 0$, the localization property
  \[\hat T(\rho, \hat X, 0) \cong j_+ j^+ \hat T(\rho, \hat X, 0)\]
  holds and more precisely, we have
  \[
     \hat T(\rho, \hat X, 0) \cong \iota_+ (\mathcal D_{\hat X \setminus \{0\}} \otimes^{\mathbb{L}}_{\mathcal A_{\hat X \setminus \{0\}}} \omega_{\hat X \setminus \{0\}}^\vee).    \]
                          \end{prop}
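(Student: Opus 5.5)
We plan to show that the stalk-type contribution of $\hat T=\hat T(\rho,\hat X,\beta)$ at the origin vanishes; the isomorphism with the pushed-forward orbit complex then follows from \cref{prop:ThatRestricted}. Let $i_0\colon\{0\}\hookrightarrow V$ and $j\colon V\setminus\{0\}\hookrightarrow V$. For $\beta\neq 0$ we use the triangle
\[
j_\dag j^+\hat T\lra \hat T\lra i_{0+}i_0^{+}\hat T\overset{+1}\lra,
\]
where $i_0^{+}$ is the (shifted) non-characteristic-type inverse image adjoint to $i_{0+}$ on the appropriate side. Colocalization is then equivalent to $i_0^{+}\hat T=0$. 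By Kashiwara's theorem and the $\C^*$-equivariance (monodromicity) of $\hat T$, the restriction $i_0^{+}\hat T$ can be computed as a de Rham-type direct image to a point. In fact, for a monodromic module, $i_0^{+}$ agrees with $a_+$ up to shift, and after Fourier transform this amounts to taking $0$-th homogeneous parts. So we reduce to showing that
\[
a_+\hat T=R\Gamma\bigl(V,\omega_V\otimes^{\mathbb L}_{\cD_V}\hat T\bigr)
\]
vanishes, up to shift.

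The key step uses associativity of derived tensor products:
\[
\omega_V\otimes^{\mathbb L}_{\cD_V}\bigl(\cD_V\otimes^{\mathbb L}_{\cA_V}\O_{\hat X}\{\beta'\}\bigr)\cong \omega_V\otimes^{\mathbb L}_{\cA_V}\O_{\hat X}\{\beta'\}.
\]
Taking $R\Gamma(V,\cdot)$, this is exactly the object computed in \cref{prop:Acyclic}, which vanishes for $\beta\neq0$. To avoid the monodromic identification $i_0^+\simeq a_+$, I would argue via Fourier--Laplace transform instead: $\hat T$ is monodromic, so the cone over $\{0\}$ corresponds to a constant (free $\O$-)module part on $V^\vee$. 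Its fibre at $0$ is detected by $a_+$ of $\hat T$, or equivalently by homogeneous degree-zero pieces. Making this identification precise, with correct shifts and the direction of the triangle ($j_\dag$ versus $j_+$), is the main technical obstacle. I expect to use that for monodromic holonomic complexes, $i_0^\dag$ and $a_\dag$ agree, and dually $i_0^+$ and $a_+$ agree. With this, $i_0^+\hat T\cong a_+\hat T=0$, giving $\hat T\cong j_\dag j^+\hat T$. Then \cref{prop:ThatRestricted} with $\partial Y=\{0\}$ identifies $j^+\hat T$ with $i_+(\cD_Y\otimes^{\mathbb L}_{\cA_Y}(\omega_Y\{\beta\})^\vee)$. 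Since $\iota=j\circ i$ and $i$ is a closed embedding ($i_\dag=i_+$), we obtain the stated formula.

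For $\beta=0$ with $\hat X$ Gorenstein, \cref{prop:Acyclic} gives a nonzero answer $H_{-\bullet}(G,\C)$, so direct vanishing fails. Instead we use duality from \cite{GS-Duality}. The Gorenstein hypothesis means $\omega_{\hat X}\cong\O_{\hat X}\{\gamma\}$ equivariantly for the character $\gamma$ (as in \cref{rem:GorensteinGamma}). Under this hypothesis, the holonomic dual $\mathbb D\hat T(\rho,\hat X,\beta)$ is isomorphic to $\hat T(\rho,\hat X,\gamma'-\beta)$, up to shift, for a suitable character combination; with $\beta=0$ the dual parameter is nonzero, since $\C\mathbf e$ acts with nonzero weight on $\omega$ of a cone of positive dimension. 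Applying $\mathbb D$ to the colocalization already proven for that nonzero parameter, and using $\mathbb D j_\dag=j_+\mathbb D$, gives $\hat T(\rho,\hat X,0)\cong j_+j^+\hat T(\rho,\hat X,0)$. The explicit formula then again follows from \cref{prop:ThatRestricted} with $\beta=0$. Verifying that the dual parameter is indeed nonzero, i.e.\ that its $\mathbf e$-component is nonzero, is a short check on degrees of $\omega_{\hat X}$.
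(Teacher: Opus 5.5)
Your proposal is correct and follows the paper's proof step for step. Both use the adjunction triangle for $j_\dag j^+$, identify the restriction to the origin with $a_{V,+}\hat T$ for monodromic complexes, reduce to \cref{prop:Acyclic} by associativity of the derived tensor product, and finish with \cref{prop:ThatRestricted} and $\iota=j\circ i$. For $\beta=0$ both use the duality $\mathbb D\hat T(\rho,\hat X,0)\cong\hat T(\rho,\hat X,\gamma)[d_{\hat X}-d_G]$ from \cite{GS-Duality} with $\gamma\neq0$.

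The identification you flag as the main obstacle is exactly \cite[Lemma~3.3]{ReichWalth-Duco}, extended to complexes in \cite[Lemma~4.4]{AviDualProjRestrGKZ}. In the paper's conventions the third vertex of the triangle is $i_{0,+}i_0^\dag\hat T[d_V]$ and the needed input is $i_0^\dag\hat T[d_V]\cong a_{V,+}\hat T$, so what you call $i_0^+$ is the paper's $i_0^\dag$.
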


\begin{proof}
First, consider the case $\beta \neq 0$. We have the adjunction triangle (compare to \cite[Formula (21)]{GRSSW})
$$
j_\dag j^+ \hat T(\rho,\hat X, \beta) \longrightarrow
\hat T(\rho,\hat X, \beta) \longrightarrow
\left(i_{0,+} i_0^\dag \hat T(\rho,\hat X, \beta) [d_V]\right) \stackrel{+1}{\longrightarrow},
$$
where
$$
\begin{tikzcd}
\hat{X} \setminus \{0\} \ar[hook]{r}{i} & V\setminus\{0\} \ar[hook]{r}{j} & V & \{0\} \ar[hook', swap]{l}{i_0}.
\end{tikzcd}
$$

For the colocalization property, we therefore need to show that the complex of $\dC$-vector spaces $i_0^\dag \hat T(\rho,\hat X, \beta)$ is acyclic. We have the isomorphism
$$i_0^\dag \hat T(\rho,\hat X, \beta)[d_V] \cong
a_{V,+} \hat T(\rho,\hat X, \beta)
$$
in $D^b(\dC)$ by invoking \cite[Lemma 3.3]{ReichWalth-Duco} (where the result is shown only for $\cD_V$-modules, but inspection of the proof shows that it holds for complexes as well; see also \cite[Lemma 4.4]{AviDualProjRestrGKZ} where the case of complexes and the more general twisted $\dC^*$-equivariant case are discussed).
Now,
\begin{align*}
a_{V,+} \hat T(\rho,\hat X, \beta)
&= Ra_{V,*} \left(\omega_V \otimes_{\mathcal{D}_V}^\dL  \hat T(\rho,\hat X, \beta)\right)
\cong Ra_{V,*}\left(\omega_V \otimes_{\mathcal{D}_V}^\dL  \left(\mathcal D_V \otimes^\mathbb{L}_{\mathcal A_V} \O_{\hat X}\{\beta'\}\right)\right) \\
&\cong
Ra_{V,*}\left(\omega_V \otimes^\mathbb{L}_{\mathcal A_V} \O_{\hat X}\{\beta'\}\right).
\end{align*}
By \cref{prop:Acyclic},
this is zero provided that $\beta \neq 0$. This proves the colocalization property.

By \cref{prop:ThatRestricted}, we know
$$
j^+ \hat T(\rho,\hat X, \beta) \cong i_+(\mathcal D_{\hat X \setminus \{0\}} \otimes^{\mathbb{L}}_{\mathcal A_{\hat X \setminus \{0\}}} (\omega_{\hat X \setminus \{0\}}\{\beta\})^\vee).
$$
This leads to
\begin{equation}
\label{eq:colocalizationInProof}
\hat T(\rho, \hat X, \beta) \cong j_\dag j^+ \hat T(\rho, \hat X, \beta) \cong \iota_\dag(\mathcal D_{\hat X \setminus \{0\}} \otimes^{\mathbb{L}}_{\mathcal A_{\hat X \setminus \{0\}}} (\omega_{\hat X \setminus \{0\}}\{\beta\})^\vee)
\end{equation}
using $\iota = j \circ i$.

In the Gorenstein case, we may apply the duality result from \cite[Theorem~4.6]{GS-Duality}, stating that
\[
\mathbb D \hat T(\rho, \hat X, 0) \cong \hat T(\rho, \hat X, \gamma)[d_{\hat X}-d_G]
\]
for some non-zero $\gamma \colon \fg \to \C$. Then the colocalization property $j_\dag j^+ \hat T(\rho, \hat X, \gamma) \cong \hat T(\rho, \hat X, \gamma)$ just shown
in \eqref{eq:colocalizationInProof}
implies by duality the localization property $j_+ j^+ \hat T(\rho, \hat X, 0) \cong \hat T(\rho, \hat X, 0)$, which together with \cref{prop:ThatRestricted} leads to the claimed description of $\hat T(\rho, \hat X, 0)$.
\end{proof}

\begin{rmk}\label{rem:GorensteinGamma}
  As observed in \cite[Example~4.8]{GS-Duality}, $\hat X$ is Gorenstein if and only if, for some $\ell \in \Z_{>0}$, we have $\Ell^{\otimes \ell} \cong \omega_X^\vee$ as line bundles on $X$. Then $(\Ell^{\otimes \ell} \otimes \omega_X)^\vee$ is a trivial line bundle with an equivariant structure. Hence, $G$ acts on sections of this line bundle by scalar multiplication through a character $\mu \colon G \to \C^*$. Note that the restriction of $\mu$ to the subgroup $\C^* \subseteq G$ is given by $\ell$-th powers. For $\gamma \coloneqq \differential \mu \colon \fg \to \C$, we have $\omega_{\hat X \setminus \{0\}} \cong \O_{\hat X \setminus \{0\}}\{-\gamma\}$ and therefore, according to \cref{prop:FunctConstrHatT}:
  \[\hat T(\rho, \hat X, \gamma) \cong \iota_\dag(\cD_{\hat X \setminus \{0\}} \otimes_{\cA_{\hat X \setminus \{0\}}}^\mathbb{L} \cO_{\hat X \setminus \{0\}}).\]
  According to the discussion in \cite[§4.4]{GRSSW} (there for case $\restr{\gamma}{\mathfrak g_0} = 0$ only, but the same argument applies in general), the parameters $\beta = \gamma$ and $\beta = 0$ are the only two parameters for which $H^0 \hat T(\rho, \hat X, \beta) \neq 0$ and we confirm that these are in fact the only integrable parameters for which $\hat T(\rho, \hat X, \beta) \neq 0$ in \cref{rem:NonZeroTHat} below.

In general, non-vanishing of $H^0 \hat T (\rho, \hat X, \beta)$ for some $\beta \neq 0$ is known to occur only if $\hat X$ is $\Q$-Gorenstein and the case that such $\beta \colon \mathfrak g \to \C$ integrates to a character of $G$ is precisely the Gorenstein case just discussed, see \cite[Example~4.8]{GS-Duality}.
\end{rmk}

\begin{cor}\label{cor:DualityMorphism}
Let $\dim X > 0$ and assume that $\hat X$ is Gorenstein and let $\gamma \colon \fg \to \C$ be such that $\omega_{\hat X\setminus \{0\}} \cong \O_{\hat X \setminus \{0\}}\{-\gamma\}$. Then there is a morphism
$$
\hat T(\rho,\hat X, \gamma) \longrightarrow
\hat T(\rho,\hat X, 0)[d_{\hat X}-d_G]
$$
in $D_{qc}^b(\mathcal{D}_V)$.
\end{cor}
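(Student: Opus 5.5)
We construct the morphism as the composite of the two (co)localization isomorphisms from \cref{prop:FunctConstrHatT} with the canonical natural transformation $\iota_\dag \to \iota_+$. Set $\cN \coloneqq \cD_{\hat X\setminus\{0\}} \otimes^{\mathbb L}_{\cA_{\hat X\setminus\{0\}}} \cO_{\hat X\setminus\{0\}}$. By \cref{rem:GorensteinGamma}, i.e.\ \cref{prop:FunctConstrHatT} with $\beta=\gamma\neq 0$ and $\omega_{\hat X\setminus\{0\}}\{\gamma\}\cong \cO_{\hat X\setminus\{0\}}$, we have $\hat T(\rho,\hat X,\gamma)\cong \iota_\dag \cN$. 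By the Gorenstein case of \cref{prop:FunctConstrHatT}, $\hat T(\rho,\hat X,0)\cong \iota_+ \cN'$ with $\cN' \coloneqq \cD_{\hat X\setminus\{0\}} \otimes^{\mathbb L}_{\cA_{\hat X\setminus\{0\}}} \omega^\vee_{\hat X\setminus\{0\}}$. Since $\omega^\vee_{\hat X\setminus\{0\}} \cong \cO_{\hat X\setminus\{0\}}\{\gamma\}$, the object $\cN'$ is the Chevalley--Eilenberg--Euler--Koszul complex for the character $\gamma$, whereas $\cN$ is the one for the character $0$.

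The key step is to compare $\cN$ and $\cN'[d_{\hat X}-d_G]$ on the orbit $Y=\hat X\setminus\{0\}$. Here I would use the duality from \cite[Theorem~4.6]{GS-Duality}, i.e.\ $\mathbb D\hat T(\rho,\hat X,0)\cong \hat T(\rho,\hat X,\gamma)[d_{\hat X}-d_G]$. Restricting it to $V\setminus\{0\}$, using that restriction to an open set commutes with $\mathbb D$, and applying \cref{prop:ThatRestricted}, I get $i_+\mathbb D\cN' \cong i_+\cN[d_{\hat X}-d_G]$ for the closed embedding $i$. Kashiwara's equivalence then gives $\mathbb D \cN' \cong \cN[d_{\hat X}-d_G]$ on $Y$. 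Alternatively, I would verify directly on the homogeneous space $Y$ that both complexes are complexes with $\cO_Y$-coherent (in fact locally free, by \cref{sec:Orbit}-type arguments) cohomology, so that $\iota_\dag$ and $\iota_+$ are the usual functors and $\iota_\dag = \mathbb D\iota_+\mathbb D$.

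To produce the morphism, I would apply the canonical transformation $\iota_\dag \to \iota_+$ to $\cN$:
\[
\hat T(\rho,\hat X,\gamma)\cong \iota_\dag\cN \longrightarrow \iota_+\cN.
\]
It then remains to identify $\iota_+\cN$ with $\iota_+\cN'[d_{\hat X}-d_G]\cong \hat T(\rho,\hat X,0)[d_{\hat X}-d_G]$, that is, to show $\cN\cong\cN'[d_{\hat X}-d_G]$ on $Y$. By the previous paragraph this amounts to $\cN'$ being self-dual up to shift. I would check this via the structure of $\cN'$ on the single orbit $Y$. It is $\cD_Y\otimes^{\mathbb L}_{\cA_Y}$ of a rank-one module; since $\cO_Y\otimes\fg\to\Theta_Y$ is surjective with kernel the isotropy bundle, the complex should be a shifted tensor product of a flat connection with Lie algebra homology of the stabilizer. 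The $\gamma$-twist becomes trivial on $Y$ because $\omega_Y$ is $G$-equivariantly $\cO_Y\{-\gamma\}$.

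The main obstacle is this last identification $\cN\cong\cN'[d_{\hat X}-d_G]$, equivalently the triviality of the twist by $\omega_Y$ at the level of $\cD_Y$-complexes. If it fails as an isomorphism, I would instead construct only a morphism $\cN\to\cN'[d_{\hat X}-d_G]$ on $Y$ from a nonzero $G$-equivariant map of the underlying flat bundles, and then compose with $\iota_\dag\to\iota_+$.
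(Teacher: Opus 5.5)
Your main route fails at the step you flagged. The isomorphism $\cN\cong\cN'[d_{\hat X}-d_G]$ on $Y$ is false, so $\iota_+\cN$ is \emph{not} $\hat T(\rho,\hat X,0)[d_{\hat X}-d_G]$.

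\begin{itemize}
\item By \cref{prop:HatTOrbit} and \cref{lem:HomCohom}, $\cN\cong\O_Y\otimes_\C H_{-\bullet}(P,\C)$. It lies in degrees $\le 0$, and $H^{-r}\cN\neq 0$ for $r=\dim_\C Q\geq 1$. Here $Q$ is a Levi factor of $P\cong P_0$, which contains a maximal torus of $G_0$.
\item Your own (correct) duality computation $\mathbb D\cN'\cong\cN[d_{\hat X}-d_G]$ gives $\cN'[d_{\hat X}-d_G]\cong\mathbb D\cN$. This lies in degrees $\geq 0$.
\item Equivalently, $H^{-r}\iota_+\cN$ contains the summand $H^0\iota_+\O_Y\otimes_\C H_r(P,\C)\neq 0$. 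But $\hat T(\rho,\hat X,0)[d_{\hat X}-d_G]$ has no cohomology in negative degrees by \cite[Proposition~4.3]{GS-Duality}.
\item Already for $X=\P^1\subseteq\P(\C^2)$ you get $\cN\cong\O_Y\oplus\O_Y[1]$ but $\mathbb D\cN\cong\O_Y\oplus\O_Y[-1]$.
\end{itemize}

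What fails is the heuristic that the $\gamma$-twist becomes trivial on $Y$. Twisting by $\omega_Y^\vee$ replaces $H_\bullet(\mathfrak p,\C)$ by $H_\bullet(\mathfrak p,\C_{\differential\mu})$, which is Poincar\'e dual to it. So the degrees are reversed, not shifted by $d_P$; in fact $\cN\cong\cN'[\dim_\C Q-d_P]$.

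Your fallback points in the right direction, but as written it does not yet produce a morphism. $\cN$ and $\cN'$ are not shifted flat bundles, and a map of bundles only gives a morphism of complexes via truncation. The missing ingredient is the paper's cohomological-amplitude argument.

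\begin{itemize}
\item On $Y$ it reads as follows. Since $\cN\in D^{\le 0}$ and $\mathbb D\cN\in D^{\ge 0}$, one has $\operatorname{Hom}(\cN,\mathbb D\cN)\cong\operatorname{Hom}(H^0\cN,H^0\mathbb D\cN)=\operatorname{Hom}(\O_Y,\O_Y)$. Composing $\cN\to\tau_{\ge 0}\cN\cong\O_Y\cong\tau_{\le 0}\mathbb D\cN\to\mathbb D\cN$ with $\iota_\dag\to\iota_+$ then gives the morphism.
\item The paper argues directly on $V$. By \cite[Proposition~4.3]{GS-Duality}, the source lies in degrees $[d_{\hat X}-d_G,0]$ and the target in $[0,d_G-d_{\hat X}]$. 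So a morphism is the same as a map $H^0\hat T(\rho,\hat X,\gamma)\cong H^0\iota_\dag\O_Y\to H^0\iota_+\O_Y\cong\mathbb D H^0\hat T(\rho,\hat X,\gamma)$, and the paper takes the canonical one.
\end{itemize}

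Read literally, the statement is satisfied by the zero morphism. What is used later, in \cref{cor:ImKerDualMorphism}, is that the morphism induces the canonical map $H^0\iota_\dag\O_Y\to H^0\iota_+\O_Y$. The corrected fallback also achieves this, because $\iota_\dag$ is right $t$-exact and $\iota_+$ is left $t$-exact.
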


\begin{proof}
By \cite[Proposition 4.3]{GS-Duality} we know that $\hat T(\rho,\hat X, \beta)$ has cohomologies in degrees $\{d_{\hat{X}}-d_G,\ldots,0\}$ only for any $\beta$. Hence
a morphism
$\hat T(\rho,\hat X, \gamma) \to
\hat T(\rho,\hat X, 0)[d_{\hat X}-d_G]$
is given by the corresponding morphism on the zero-th cohomology (as can be seen by representing these elements in the derived categories by complexes in degrees $\{d_{\hat X}-d_G,\ldots,0\}$ and $\{0,\ldots,d_G-d_{\hat X}\}$, respectively).
From \cref{prop:FunctConstrHatT}, we know that $\hat T(\rho, \hat X, \gamma) = \iota_\dag (\cD_{\hat X \setminus \{0\}} \otimes_{\cA_{\hat X \setminus \{0\}}}^\mathbb{L} \cO_{\hat X \setminus \{0\}})$. It then follows from the third quadrant spectral sequence
\[E_2^{pq} = H^p\iota_\dag H^q (\cD_{\hat X \setminus \{0\}} \otimes_{\cA_{\hat X \setminus \{0\}}}^\mathbb{L} \cO_{\hat X \setminus \{0\}}) \Longrightarrow H^{p+q}\hat T(\rho, \hat X, \gamma),\]
that
\[
H^0 \hat T(\rho, \hat X, \gamma) \cong
H^0 \iota_\dag H^0 (\cD_{\hat X \setminus \{0\}} \otimes_{\cA_{\hat X \setminus \{0\}}}^\mathbb{L} \O_{\hat X \setminus \{0\}})
\cong H^0 \iota_\dag \O_{\hat X \setminus \{0\}},
\]
where the last isomorphism uses $\cD_{\hat X \setminus \{0\}} \otimes_{\cA_{\hat X \setminus \{0\}}} \O_{\hat X \setminus \{0\}} \cong \cD_{\hat X \setminus \{0\}}/\cD_{\hat X \setminus \{0\}} \mathfrak g \cong \O_{\hat X \setminus \{0\}}
$ by transitivity of the action of $G$ on $\hat X \setminus \{0\}$. (Notice that this is a description of a Fourier-transformed tautological system $\hat\tau(\rho, \hat X, \gamma)$ as $H^0\iota_\dag \O_{\hat X \setminus \{0\}}$; compare also \cite[Theorem~6.3]{GRSSW}.)

Taking duals, we obtain
\[H^0 \iota_+ \O_{\hat X \setminus \{0\}} \cong \mathbb{D} H^0\hat T(\rho, \hat X, \gamma) \cong H^0 \hat T(\rho, \hat X, 0)[d_{\hat X}-d_G],\]
using the duality result proved in \cite[Theorem~4.6]{GS-Duality}.
The natural morphism $H^0\iota_\dag \O_{\hat X \setminus \{0\}} \to H^0 \iota_+ \O_{\hat X \setminus \{0\}}$ therefore gives a morphism
\[H^0 \hat T(\rho, \hat X, \gamma) \to H^0 \hat T(\rho, \hat X, 0)[d_{\hat X}-d_G].\]
This determines the desired morphism $\hat T(\rho,\hat X, \gamma) \to \hat T(\rho,\hat X, 0)[d_{\hat X}-d_G]$ by the discussion of the cohomological amplitude above.
\end{proof}

As we have just seen, the morphism of complexes $\hat T(\rho,\hat X, \gamma) \longrightarrow
\hat T(\rho,\hat X, 0)[d_{\hat X}-d_G]$ is determined by its degree zero part.
Using our description of the weight filtration from \cref{cor:WeightIota}, part 2., we obtain the following description of the kernel and cokernel of the degree zero part of this morphism.

\begin{cor}\label{cor:ImKerDualMorphism}
Under the assumptions of \cref{cor:DualityMorphism}, the image of
$$
H^0\hat T(\rho,\hat X, \gamma) \longrightarrow
H^0\hat T(\rho,\hat X, 0)[d_{\hat X}-d_G]
$$
equals $H^0\iota_{\dag +} \O_{\hat X \setminus \{0\}}$. Moreover, we have
$$
\ker \left(H^0 \hat T(\rho,\hat X, \gamma) \longrightarrow
H^{0}\hat T(\rho,\hat X, 0)[d_{\hat X}-d_G] \right) \cong i_{0,+} \PH^{d_X}(X,\dC)
$$
and
$$
\coker \left(H^0 \hat T(\rho,\hat X, \gamma) \longrightarrow
H^0\hat T(\rho,\hat X, 0)[d_{\hat X}-d_G] \right)= i_{0,+} \KH^{d_X}(X,\dC).
$$
\end{cor}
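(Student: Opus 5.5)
The plan is to reduce everything to the natural morphism $H^0\iota_\dag\cO_Y\to H^0\iota_+\cO_Y$, with $Y=\hat X\setminus\{0\}$, and then read off kernel, image and cokernel from \cref{cor:WeightIota}, part 2.

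First, I will use the identifications already obtained in the proof of \cref{cor:DualityMorphism}. There, $H^0\hat T(\rho,\hat X,\gamma)\cong H^0\iota_\dag\cO_Y$ and $H^0\hat T(\rho,\hat X,0)[d_{\hat X}-d_G]\cong H^0\iota_+\cO_Y$. Under these isomorphisms the degree zero part of the morphism of \cref{cor:DualityMorphism} is, by its very construction, the natural morphism $H^0\iota_\dag\cO_Y\to H^0\iota_+\cO_Y$. So it suffices to compute image, kernel and cokernel of this natural map.

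Second, I will use the Hodge-theoretic picture of \cref{sec:LocCohom}. The natural map is the image under the de Rham functor of $\mathcal H^0\iota_!\,{}^p\mbqH_Y\to\mathcal H^0\iota_*\,{}^p\mbqH_Y$. Applying $k_*$, this is the composition $\alpha\circ\beta$ from the four-term exact sequence \eqref{eq:4TermSeq}. That sequence gives the kernel $i_{0!}\PH^{d_X}(X,\mbq)$ and the cokernel $i_{0!}\KH^{d_X}(X,\mbq)(-1)$. On underlying $\cD$-modules these become $i_{0,+}\PH^{d_X}(X,\dC)$ and $i_{0,+}\KH^{d_X}(X,\dC)$; at the level of $\cD$-modules the Tate twist is invisible. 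This is exactly the final statement of \cref{cor:WeightIota}, part 2.

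Third, the image. By \cref{lem:decomp} and the discussion after it, the image of $\beta$ is $\overline{j}_{!*}{}^p\mbqH_Y$. Moreover $\alpha$ restricted to this summand is injective, since $\ker\alpha$ is the point-supported part $i_{0!}H^{d_X-1}$ and $\overline{j}_{!*}$ has no point-supported subobjects. Hence the image of $\alpha\circ\beta$ is the intermediate extension, whose underlying $\cD$-module is $\iota_{\dag+}\cO_Y = H^0\iota_{\dag+}\cO_Y$. Alternatively, the image equals $W_{d_X+1}H^0\iota_+\cO_Y$, as recorded in \cref{cor:WeightIota}.

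The main obstacle is the first step. One must check that the morphism of \cref{cor:DualityMorphism}, transported through the duality isomorphism of \cite[Theorem~4.6]{GS-Duality}, really coincides with the canonical map $\iota_\dag\to\iota_+$ on $H^0$, rather than with some automorphism composed with it. Here I would argue that $\iota_{\dag+}\cO_Y$ is simple and both sides have it as their unique simple constituent with support not contained in $\{0\}$. The identification of the image therefore does not depend on such an automorphism, and neither do the kernel and cokernel up to isomorphism.
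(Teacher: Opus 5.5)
Your proposal is correct and follows essentially the paper's route. The degree-zero part of the morphism in \cref{cor:DualityMorphism} is, by construction, the natural map $H^0\iota_\dag\cO_Y\to H^0\iota_+\cO_Y$ transported through the isomorphisms $H^0\hat T(\rho,\hat X,\gamma)\cong H^0\iota_\dag\cO_Y$ and $H^0\hat T(\rho,\hat X,0)[d_{\hat X}-d_G]\cong H^0\iota_+\cO_Y$, and image, kernel and cokernel are then read off from \cref{cor:WeightIota}, part 2 (that is, from \eqref{eq:4TermSeq}). Since that morphism is defined as exactly this composite, the ``obstacle'' in your last paragraph is not a real one, though your argument via the unique simple constituent not supported at the origin would also settle it.
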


\begin{rem}
We have excluded the case $\dim(X)=0$ from the previous discussion since
in that case \cref{prop:Acyclic} can fail, and also because this is of no interest in any geometrically relevant situation. If indeed $\dim(X)=0$ (so that $\hat{X}= V = \dA^1$), an argument  using \cref{prop:HatTOrbit} below can me made that  replaces \cref{prop:Acyclic} and shows that
then
$$
\hat{T}(\rho, \hat X, \beta)=
\begin{cases}
\D j_+\cO_{\dC^*}\otimes H_{-\bullet}(G_0,\dC) &\text{if }\beta\in \dZ_{\leq 0},
\\
\D j_\dag\cO_{\dC^*}\otimes H_{-\bullet}(G_0,\dC) &\text{if }\beta\in \dZ_{> 0},
\end{cases}
$$
where $j:\hat{X}\backslash\{0\}=\dC^*\hookrightarrow \hat{X}=\dC$.
\end{rem}

\section{Restriction to the open orbit}\label{sec:Orbit}

In this section, we give a topological interpretation of the restriction of the complex $\hat{T}(\rho,\overline{Y},\beta)$ to a stratum.
\cref{prop:ThatRestricted} (and \cref{prop:FunctConstrHatT} in the case of homogeneous spaces) describes $\hat T(\rho, \overline Y, \beta)$ in terms of the $\mathcal D_Y$-module $\mathcal D_Y \otimes_{\mathcal A_Y}^{\mathbb L} (\omega_Y\{\beta\})^\vee$. Our aim here is  to understand this object on $Y$. As a consequence of these considerations we give, via \cref{prop:LocCohomCone}, an interpretation of this complex in terms of local cohomology modules.

Throughout this section, $G$ denotes a connected reductive linear algebraic group acting transitively on an algebraic variety $Y$.
We here restrict ourselves to the case that $\beta$ is integrable, i.e., $\beta = \differential \nu$ for some group character $\nu \colon G \to \C^*$. Under this assumption, $\omega_Y\{\beta\}$ underlies a $G$-equivariant line bundle. Notice that in the case that the reductive group $G$ is of the form $G = (\C^*)^r \times G_0$ with $G_0$ semisimple, then $\restr{\beta}{\mathfrak g_0} = 0$ and the integrability condition of $\beta \colon \bigoplus_{i=1}^r \C e_i \oplus \fg_0 \to \C$ is equivalent to the integrality of the parameters: $\beta(\mathbf e_i) \in \Z$ for all $i$. An arbitrary reductive group $G$ admits a surjection $(\C^*)^r \times G_0 \twoheadrightarrow G$ of algebraic groups with finite kernel, where $G_0$ is semisimple. Then $G$ and $(\C^*)^r \times G_0$ share the same Lie algebra $\C^r \oplus \mathfrak g_0$ and every group character $\nu \colon G\to \C^*$ induces a group character $(\C^*)^r \oplus G_0 \to \C^*$ and therefore an integrable $\beta \colon \mathfrak g = \mathfrak g_0 \oplus \bigoplus_{i = 1}^r \C \mathbf e_i \to \C$ necessarily satisfies integrality $\beta(\mathbf e_i) \in \Z$ for all $i$ (but this may no longer be a sufficient condition: integrability is equivalent to $(\beta(\mathbf e_i))_{i=1}^r$ lying in a certain sublattice of $\Z^r$).

We first discuss a lemma that holds in greater generality and uses equivariance properties of the objects involved.

\begin{lem}\label{lem:RestToPoint}
    Fix $y_0\in Y$ and assume that $P\coloneqq\textup{Stab}(y_0) \subseteq G$ is connected. Denote by
  $\pi \colon G\rightarrow Y=G/P$, $g \mapsto g \cdot y_0$ the canonical projection. Then we have an isomorphisms in
in $D^b_h(\mathcal{D}_Y)$:
  $$
    \pi_\star \cO_G \cong  \cO_Y\otimes_\dC a_\star \cO_P,
  $$
  for $\star\in\{+,\dag\}$,
  where, as before, $a:P\to\{y_0\}$ is the structure morphism of $P$.
  \end{lem}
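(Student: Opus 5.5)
The plan is to show that $\pi_\star \cO_G$ is a complex of $G$-equivariant $\cD_Y$-modules with regular holonomic cohomology, hence determined by its restriction to the single point $y_0$. Then I compute that restriction by base change.

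\textbf{Step 1: fibration structure.} The map $\pi\colon G\to G/P=Y$ is smooth. It is a principal $P$-bundle for the right action of $P$ on $G$, and it is equivariant for the left action of $G$ on both sides. Since $\cO_G$ is $G$-equivariant for the left action, each cohomology module $H^k\pi_\star\cO_G$ is a $G$-equivariant coherent $\cD_Y$-module on the homogeneous space $Y$. By transitivity, such modules are integrable connections. Equivariance further gives an equivalence between $G$-equivariant $\cD_Y$-modules and finite-dimensional representations of the component group $P/P^\circ$. Because $P$ is connected, every such module is a direct sum of copies of $\cO_Y$: concretely, $\cM\cong\cO_Y\otimes_\C i_{y_0}^{+}\cM$ up to the standard shift.

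\textbf{Step 2: fibre via base change.} Base change along the inclusion $i_{y_0}\colon\{y_0\}\hookrightarrow Y$, whose fibre square has top row $P\hookrightarrow G$, gives $i_{y_0}^{+}\pi_\star\cO_G\cong a_\star (\cO_G|_P)$ up to the shift $[d_Y]$. For $\star=\dag$ one uses the dual base change. Here $\cO_G|_P=\cO_P$ up to the same shift convention, and all shifts cancel. Thus the fibre of $H^k\pi_\star\cO_G$ at $y_0$ is $H^k a_\star\cO_P$, i.e.\ a cohomology group of $P$ with $\C$ coefficients, suitably indexed.

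\textbf{Step 3: upgrade to the derived category.} Knowing the cohomology sheaves is not sufficient; one needs a splitting of the complex. I would get it from the $G\times P$-equivariant isomorphism $G\times_P P\cong G$ rather than from cohomology. More concretely, the shear map $G\times P\to G\times P$, $(g,p)\mapsto(gp,p)$, identifies $\mu\colon G\times P\to G$ with the projection. Together with the trivialization of $\pi$ after pullback along $\pi$, this gives $\pi^{+}\pi_\star\cO_G\cong\cO_G\otimes a_\star\cO_P$.

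To descend this isomorphism along $\pi$, I would use a different argument. Since $D^b$ of $G$-equivariant modules with constant cohomology on $Y=G/P$ is equivalent to $D^b$ of modules over the equivariant cohomology of a point for $P$, a cleaner route is an explicit splitting. Each cohomology sheaf is a constant connection $\cO_Y\otimes H^k$, and $\mathrm{Ext}^{i}_{\cD_Y}(\cO_Y,\cO_Y)=H^i(Y,\C)$ may be nonzero. So splitting is not automatic. To kill these extension classes, I would pass to the adjunction morphism $\pi_\star\cO_G\to \pi_\star\pi^{+}\ldots$. Alternatively, I would use the morphism $\cO_Y\otimes a_\star\cO_P\to\pi_\star\cO_G$ induced from the unit and the trivial section, and check that it is an isomorphism on cohomology fibres by Step 2.

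\textbf{Main obstacle.} The hard step is constructing this global comparison morphism. My first attempt will use the Leray–Hirsch-type observation that the restriction $H^\bullet(G)\to H^\bullet(P)$ is surjective for connected reductive $P\subseteq G$. This lets me lift generators of $a_\star\cO_P$ to global classes and so obtain the morphism $\cO_Y\otimes a_\star\cO_P\to\pi_\star\cO_G$ (with the appropriate shift). Its being an isomorphism on fibres, and hence an isomorphism, then follows from Step 2. The case $\star=\dag$ follows by duality, since $\mathbb D\pi_+\cong\pi_\dag\mathbb D$, $\mathbb D\cO_G$ is $\cO_G$ up to shift, and $a_\dag\cO_P$ is the dual of $a_+\cO_P$.
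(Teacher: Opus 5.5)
\textbf{Assessment.} There is a genuine gap in Step 3, and it cannot be closed, because the isomorphism of complexes fails in general. The paper's proof has the corresponding gap at its descent step.

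\textbf{What works.} Your Steps 1 and 2 are sound and use the same ingredients as the paper. Equivariance makes each $H^k\pi_\star\cO_G$ a constant connection, and base change at $y_0$ identifies its fibre with $H^k a_\star\cO_P$.

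\textbf{Why Step 3 fails.} The Leray--Hirsch input you invoke is surjectivity of $H^\bullet(G,\C)\to H^\bullet(P,\C)$. This is false in general, and $P$ need not be reductive anyway. For $G$ semisimple and $P$ a proper parabolic subgroup, $H^1(G,\C)=0$ while $H^1(P,\C)\neq 0$. Already $B\subset\mathrm{SL}_2$ is a counterexample, and so is a maximal torus $T\subset\mathrm{SL}_2$, which is connected and reductive.

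Worse, this surjectivity is \emph{equivalent} to the splitting you want. Under the de Rham functor, $\pi_+\cO_G$ becomes $R\pi_*\C_G$ up to shift. A morphism $\C_Y[-q]\to R\pi_*\C_G$ is exactly a class in $H^q(G,\C)$, and on the fibre over $y_0$ it induces the restriction of that class to $P$. Conversely, a splitting would force $H^\bullet(G,\C)\cong H^\bullet(Y,\C)\otimes_\C H^\bullet(P,\C)$.

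\textbf{Counterexample.} Take $G=\mathrm{SL}_2$, $Y=\P^1$, $P=B$. The complex $\pi_+\cO_G$ has cohomology $\cO_{\P^1}$ in degrees $-2$ and $-1$. However, $a_{\P^1,+}\pi_+\cO_G=a_{G,+}\cO_G$ has total dimension $\dim H^\bullet(\mathrm{SL}_2,\C)=2$. By contrast, $a_{\P^1,+}(\cO_{\P^1}[2]\oplus\cO_{\P^1}[1])$ has total dimension $4$. So the class in $\textup{Ext}^2_{\cD_{\P^1}}(\cO_{\P^1},\cO_{\P^1})\cong H^2(\P^1,\C)$ that you worried about is nonzero; it is the Euler class of the Hopf fibration.

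The same happens in the paper's own setting. Take the Gorenstein cone over $\mathrm{SL}_3/B$ embedded by $\Ell=\cO(1,1)$, with $G=\C^*\times\mathrm{SL}_3$. Then $P\cong B$, and $\dim H^1(G,\C)=1<2=\dim H^1(P,\C)$.

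\textbf{What survives.} The statement for the cohomology modules holds. The complex-level statement holds exactly when $H^\bullet(G,\C)\to H^\bullet(P,\C)$ is surjective, equivalently when the Leray spectral sequence of $\pi$ degenerates at $E_2$. Under that hypothesis your lifting argument goes through.

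\textbf{Comparison with the paper's proof.} The paper takes a different route, which does not bridge this gap either. It shows $\pi^+\cM\cong\cO_G\otimes_\C i_0^+\cM$ for equivariant $\cM$; your isomorphism $\pi^+\pi_\star\cO_G\cong\cO_G\otimes_\C a_\star\cO_P$ is the special case needed. It then descends along $\pi$ by asserting that $\pi^+\cM\cong\pi^+\cN$ implies $\cM\cong\cN$. This rests on $D^b_{qc}(\cD_Y)\simeq D^b_{qc}(\textup{Mod}(\cD_G)^P)$ together with a derived version of the full faithfulness of forgetting $P$-equivariance.

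That full faithfulness holds for modules but not for complexes, and the descent step is precisely where your $\textup{Ext}$-obstruction sits. In the $\mathrm{SL}_2$ example, base change gives $\pi^+\pi_+\cO_G\cong\cO_G\otimes_\C a_+\cO_B\cong\pi^+(\cO_{\P^1}\otimes_\C a_+\cO_B)$, because the extension class dies in $H^2(\mathrm{SL}_2,\C)=0$. Yet the two objects on $\P^1$ are not isomorphic.

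So you located the real difficulty correctly. The missing ingredient is not a cleverer comparison map but an additional Leray--Hirsch-type hypothesis in the statement.
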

\begin{proof}

Extending the usual definition of equivariance, we call an object $\cM\in D^b_{qc}(\mathcal{D}_Y)$ (strongly) $G$-equivariant if there is an isomorphism
in $D^b_{qc}(\cD_{G\times Y})$
\begin{eqnarray}\label{eqn-str-equiv}
\act^+\cM \cong \textup{pr}_2^+\cM,
\end{eqnarray}
(satisfying a cocycle condition), where $\act:G\times Y \to Y$ is the given $G$-action. If no confusion is possible, we will usually speak about $G$-equivariant objects only, which always refers to strong $G$-equivariance.

It is easy to see that if $Y_1$ and $Y_2$ are smooth $G$-varieties and if $f:Y_1\rightarrow Y_2$ is $G$-equivariant, then for an equivariant object $\cM\in D^b_{qc}(\mathcal{D}_{Y_1})$ resp.\ $\cN\in D^b_{qc}(\mathcal{D}_{Y_2})$, the direct resp.\ inverse image $f_+\cM$ resp.\ $f^+\cN$ as well as their counterparts $f_\dag\cM$ resp.\ $f^\dag\cN$ are equivariant as well. In particular, in the situation of the lemma, the complexes $\pi_\star \cO_G$ for $\star\in\{+,\dag\}$ are equivariant objects in $D^b_{qc}(\cD_Y)$.

With these notations at hand, and assuming that $G$ acts transitively on $Y$, one can easily show by generalizing \cite[Proof of Theorem 11.6.1 (i)]{Hotta} that for an equivariant object $\cM\in D^b_{qc}(\mathcal{D}_Y)$ we have
\begin{equation}\label{eq:PullbackHottaProof}
\pi^+ \cM \cong \cO_G \otimes_{\dC} i^+_0 \cM,
\end{equation}
where $i_0:\{y_0\}\hookrightarrow Y$.

It is well-known that if the stabilizer $P$ is connected, then the inverse image functor $\pi^+$ induces an equivalence between
$D^b_{qc}(\mathcal{D}_Y)$ and the derived category $D^b_{qc}(\textup{Mod}(\cD_G)^P)$ of $P$-equivariant $\cD_G$-modules (notice that being an object in the latter is a priori a stronger condition than being strongly $P$-equivariant in the sense of \eqref{eqn-str-equiv}). A non-derived version of this statement is in \cite[Proposition 8.4]{Gaitsgory}; remark that the necessary conditions for this conclusion are much weaker:  we only need that $\pi$ is a principal $P$-fiber bundle.
Moreover, the forgetful functor $\textup{Mod}(\mathcal{D}_G)^P$ to $\textup{Mod}(\mathcal{D}_G)$ is fully faithful by \cite[Proposition 3.1.2]{vanDenBergh}. Using the corresponding statement for the derived categories, we conclude that if $\cM,\cN\in D^b_{qc}(\mathcal{D}_Y)$ satisfy $\pi^+\cM\cong \pi^+\cN$, then $\cM\cong\cN$. By \cref{eq:PullbackHottaProof}, an equivariant object $\cM\in D^b_{qc}(\mathcal{D}_Y)$ satisfies
$\pi^+(\cM)\cong \pi^+(\cO_Y\otimes_\dC i_0^+\cM)$, and hence
\begin{equation}\label{eq:EquiModTrivial}
\cM \cong  \cO_Y\otimes_\dC i_0^+\cM
\end{equation}
in $D^b_{qc}(\cD_Y)$. In particular, any such $\cM$ is necessarily a smooth $\cD_Y$-module, so that $i_0^+\cM\cong i_0^\dag \cM$. Applying base change and  \cref{eq:EquiModTrivial} to the case $\cM=\pi_\star\cO_G$, we obtain the desired result.
\end{proof}

\begin{rmk} \label{rmk:Formality}
  It follows from \cref{eq:EquiModTrivial} that an $G$-equivariant object $\cM \in D^b_{qc}(\cD_Y)$ for transitive actions with connected stabilizer is \emph{formal}: it is quasi-isomorphic to the complex built from its cohomology groups $H^k \cM[-k]$ with trivial differential.  In particular,  $G$-equivariant objects in $D_{qc}^b(\cD_Y)$ are objects in the category $D^b_{qc}(\textup{Mod}(\cD_Y)^G)$ and two such objects are isomorphic if and only if they have isomorphic cohomology.
\end{rmk}

\begin{prop}\label{prop:HatTOrbit}
  Assume $G$ acts transitively on $Y$ such that the stabilizer $P$ of a point $y_0 \in Y$ is connected. Then
  \[\mathcal D_Y \otimes_{\cA_Y}^\mathbb{L} \O_Y \cong \O_Y \otimes_\C H_{-\bullet} (\mathfrak p, \C).\]
  More generally,
  \[\cC^\bullet(\cM,\beta) = \mathcal D_Y \otimes_{\mathcal A_Y}^\mathbb{L} \cM \{\beta\} \cong \O_Y \otimes_\C H_{-\bullet}(\mathfrak p, \C_{\differential \mu + \restr{\beta}{\mathfrak p}})\]
  for $\beta \colon \mathfrak g \to \C$ integrable (i.e., of the form $\differential \nu$ for $\nu \colon G\to \C^*$) and any $G$-equivariant line bundle $\cM$ on $Y$ with corresponding character $\mu \colon P \to \C^*$.
  \end{prop}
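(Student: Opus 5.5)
The plan is to reduce to a statement at the point $y_0$ via the equivariance machinery of \cref{lem:RestToPoint}, and then compute the fibre by Lie algebra homology.

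\textbf{Step 1: the complex is equivariant, hence constant.} Since $\cM$ is a $G$-equivariant line bundle and $\beta=\differential\nu$ is integrable, $\cM\{\beta\}$ is again a $G$-equivariant $\cA_Y$-module; it is just $\cM$ twisted by $\nu$. The complex $\cC^\bullet(\cM,\beta)$ is built functorially from equivariant data, so it is strongly $G$-equivariant in the sense of \eqref{eqn-str-equiv}. By \eqref{eq:EquiModTrivial} and \cref{rmk:Formality} we then get
\[
\cC^\bullet(\cM,\beta)\cong \cO_Y\otimes_\C i_0^+\cC^\bullet(\cM,\beta).
\]
It therefore suffices to identify the fibre complex $i_0^+\cC^\bullet(\cM,\beta)$ with $H_{-\bullet}(\mathfrak p,\C_{\differential\mu+\restr{\beta}{\mathfrak p}})$.

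\textbf{Step 2: pull back to $G$ instead of restricting.} Rather than compute $i_0^+$ directly, I would pull back along $\pi\colon G\to Y$. By \eqref{eq:PullbackHottaProof}, $\pi^+\cC^\bullet\cong\cO_G\otimes i_0^+\cC^\bullet$. On the other hand, $\pi^+$ commutes with the Lie-algebroid direct image in the relevant sense (base change for the diagram of Lie algebroids, as in \cref{lem:DirectImCCompl}). This gives
\[
\pi^+\cC^\bullet(\cM,\beta)\cong \cD_G\otimes^{\mathbb L}_{\cA_G}\pi^*\cM\{\beta\},
\]
where $\mathfrak g$ acts on $G$ by left translation.

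On $G$, the left-translation vector fields trivialize $\Theta_G$. Hence $\cD_G\otimes_{\cA_G}(\cdot)$ with respect to the full $\mathfrak g$ is essentially a base change along an isomorphism of Lie algebroids. What remains is the interaction with the right $P$-action encoding the descent. Concretely, I would write $\cD_Y$-modules as $P$-equivariant $\cD_G$-modules, using the equivalence cited in the proof of \cref{lem:RestToPoint}. The $\mathfrak g$-Chevalley--Eilenberg complex on $Y$ then corresponds on $G$ to the complex $\cD_G\otimes\pi^*\cM\{\beta\}\otimes\bigwedge^{-\bullet}\mathfrak g$. Since $\mathfrak g\otimes\cO_G\to\Theta_G$ is an isomorphism, the only homology left comes from the kernel of $\mathfrak g\otimes\cO_Y\to\Theta_Y$, which at $y_0$ is exactly $\mathfrak p$.

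\textbf{Step 3: compute the fibre.} Taking the fibre at $y_0$, the complex becomes the Chevalley--Eilenberg complex of $\mathfrak p$ acting on the one-dimensional fibre $(\cM\{\beta\})_{y_0}$. A cleaner route is to filter $\bigwedge^\bullet\mathfrak g$ by $0\to\mathfrak p\to\mathfrak g\to\mathfrak g/\mathfrak p\to0$ (a Hochschild--Serre argument). The $\mathfrak g/\mathfrak p$ part reproduces a Spencer resolution of $\cD_Y\otimes_{\cO_Y}(\cdot)$ by the tangent directions, and so collapses. The fibre of the $\mathfrak p$ part is $\C_{\differential\mu+\restr{\beta}{\mathfrak p}}$: the stabilizer $\mathfrak p$ acts on $\cM_{y_0}$ by $\differential\mu$ and on the twist by $\restr{\beta}{\mathfrak p}$. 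Its homology is $H_\bullet(\mathfrak p,\C_{\differential\mu+\restr{\beta}{\mathfrak p}})$, placed in degrees $-\bullet$.

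The first formula is the special case $\cM=\cO_Y$, $\mu=1$, $\beta=0$.

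\textbf{Main obstacle.} I expect the hardest step to be making Step 2/3 rigorous. This means justifying the collapse of the Hochschild--Serre filtration globally, or equivalently showing that restriction to $y_0$ commutes with the Chevalley--Eilenberg construction and isolates $\mathfrak p$. I would first try to prove it on $G$, where the action is free, and then descend by $P$-equivariance using the fully faithful forgetful functor from \cite{vanDenBergh}. Because of formality (\cref{rmk:Formality}), it then suffices to match cohomology sheaves, which avoids having to track differentials.
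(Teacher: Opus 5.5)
\paragraph{There is a genuine gap in your fibre computation.}
Your Step 1 is sound, and it is also how the paper concludes. Twisting by $\nu$ reduces to $\beta=0$, the complex $\cC^\bullet(\cM,0)$ is $G$-equivariant, and \eqref{eq:EquiModTrivial} with \cref{rmk:Formality} reduces everything to showing $H^{-\ell}i_0^+\cC^\bullet(\cM,0)\cong H_\ell(\fp,\C_{\differential\mu})$. The gap is in how you compute this fibre.

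Step 2 is false as stated. Since $G$ acts freely and transitively on itself, $\cO_G\otimes\fg\to\Theta_G$ is an isomorphism of Lie algebroids. Hence $\cA_G\cong\cD_G$, and $\cD_G\otimes^{\mathbb L}_{\cA_G}\pi^*\cM\{\beta\}\cong\pi^*\cM\{\beta\}$ is a single module. But $\pi^+$ of the asserted answer is $\pi^*\cM\otimes_\C H_{-\bullet}(\fp,\C_{\differential\mu})$. Already for $Y$ a point and $G=P=\C^*$, the complex $\cC^\bullet(\cO_Y,0)=\C\otimes^{\mathbb L}_{\C[t]}\C\cong\C\oplus\C[1]$ has two non-zero cohomologies, so its pull-back to $G$ cannot be $\cO_G$. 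Your own remark that $\fg\otimes\cO_G\to\Theta_G$ is an isomorphism already contradicts the displayed formula.

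The base change you invoke does not exist in this form. The pull-back of the Lie algebroid $\cO_Y\otimes\fg$ along $\pi$ is $\Theta_G\times_{\pi^*\Theta_Y}\pi^*(\cO_Y\otimes\fg)$, which has rank $d_G+d_P$. The extra vertical directions are exactly what carry the $\fp$-homology, so ``prove it on $G$ with the $\fg$-action and descend'' cannot work.

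Step 3 has the right shape, but a Hochschild--Serre argument for the pair $\fp\subset\fg$ is available only at the point $y_0$, not over $Y$. The sheaf $\cO_Y\otimes\fp$ is not an ideal of $\cO_Y\otimes\fg$, and its anchor does not vanish away from $y_0$ in general. Moreover, the anchor $\cO_Y\otimes\fg\to\Theta_Y$ does not factor through $\cO_Y\otimes(\fg/\fp)$. Nor can you take the fibre at $y_0$ of $\cD_Y\otimes_{\cO_Y}\cM\{\beta\}\otimes_\C\bigwedge^{-\bullet}\fg$ first. Its terms have infinite-dimensional fibres, and the reduction to $\fp$ appears only after the $\cD_Y$-factor has been removed by a global argument.

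\paragraph{The missing idea.}
Replace the constant $\fp$ by the bundle of stabilizers $\cE\coloneqq\ker(\cO_Y\otimes\fg\twoheadrightarrow\Theta_Y)$. This is a locally free Lie algebroid ideal of rank $d_P$, with fibre $\fp$ at $y_0$ and zero anchor, i.e.\ $[\cE,\cO_Y]=0$. So $0\to\cE\to\cO_Y\otimes\fg\to\Theta_Y\to0$ is an extension of Lie algebroids, and \cref{lem:HochschildSerreLieAlgebroids} gives a spectral sequence with $E^2_{pq}=H^p\cS^\bullet_{\cD_Y|\cD_Y}\bigl(\cD_Y\otimes_{\cO_Y}H^q(\cM\otimes_{\cO_Y}\bigwedge^{-\bullet}\cE)\bigr)$. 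For this you must check that $\cE$ acts trivially from the right on these cohomologies.

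The outer complex is the Spencer complex computing $(\cD_Y\otimes_{\cO_Y}\cH)\otimes^{\mathbb L}_{\cD_Y}\cO_Y\cong\cH$. It is therefore concentrated in a single degree, and the spectral sequence degenerates to $H^{-\ell}\cC^\bullet(\cM,0)\cong H^{-\ell}(\cM\otimes_{\cO_Y}\bigwedge^{-\bullet}\cE)$.

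Because the anchor of $\cE$ vanishes, this last complex has $\cO_Y$-linear differentials, of constant rank by equivariance. Its fibre at $y_0$ is therefore literally $\C_{\differential\mu}\otimes_\C\bigwedge^{-\bullet}\fp$ with the Chevalley--Eilenberg differential. Since the cohomology sheaves are smooth $\cD_Y$-modules, $i_0$ is non-characteristic for them, and $H^{-\ell}i_0^+\cC^\bullet(\cM,0)\cong H_\ell(\fp,\C_{\differential\mu})$. Together with your Step 1, this is the paper's proof.
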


For the proof, we need the following technical statement on Lie algebroids, which is the Lie algebroid version of the Hochschild--Serre spectral sequence.

\begin{lem} \label{lem:HochschildSerreLieAlgebroids}
  Let $0 \to \mathcal E_1 \to \mathcal E_2 \to \mathcal E_3 \to 0$ be a short exact sequence of locally free Lie algebroids of finite rank on an algebraic variety $Y$ and assume that the anchor map $\mathcal E_1 \to \Theta_Y$ is trivial, i.e., $[\mathcal E_1,\O_Y] = 0$ in $\mathcal U(\mathcal E_1)$. Let $\mathcal U_i \coloneqq \mathcal U(\mathcal E_i)$ denote the corresponding universal envelopping algebras. Let $\mathcal M$ be an $(\mathcal R, \mathcal U_2)$-bimodule. Then, there is a spectral sequence
  \[E^2_{pq} = H^p \mathcal S_{\mathcal R|\mathcal U_3}^\bullet(H^q \mathcal S_{\mathcal R|\mathcal U_1}^\bullet(\mathcal M)) \Longrightarrow H^{p+q} \mathcal S_{\mathcal R|\mathcal U_2}^\bullet(\mathcal M),\]
  where $\cS_{\cR|\mathcal U_i}^\bullet(\cdot) := (\cdot ) \otimes_{\O_Y} \bigwedge_{\O_Y}^{-\bullet} \cE_i$ denotes the Chevalley--Eilenberg complex for right modules over Lie algebroids, see \cite[Definition~2.5]{GS-Duality}.
\end{lem}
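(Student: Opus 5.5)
The plan is to realize the Chevalley--Eilenberg complex for $\mathcal U_2$ as a total complex of a double complex built from the complexes for $\mathcal U_1$ and $\mathcal U_3$. The spectral sequence then comes from filtering that double complex. This is the classical Hochschild--Serre argument, and I will transport it to Lie algebroids via a change-of-rings description.

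First, use the identification from \cite[Lemma~2.10 and 2.12]{GS-Duality}. There $\mathcal S^\bullet_{\mathcal R|\mathcal U_i}(\mathcal M)$ is a model for $\mathcal M \otimes^{\mathbb L}_{\mathcal U_i} \O_Y$, computed with the Chevalley--Eilenberg (Koszul) resolution $\mathcal U_i \otimes_{\O_Y} \bigwedge^{\bullet}_{\O_Y}\mathcal E_i \to \O_Y$. This resolution consists of locally free $\mathcal U_i$-modules; it is a resolution by a PBW-type argument, the associated graded being a Koszul complex over $\operatorname{Sym}_{\O_Y}\mathcal E_i$.

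Next, the triviality of the anchor of $\mathcal E_1$ makes $\mathcal E_1$ an $\O_Y$-linear Lie algebra bundle, and it is an ideal in $\mathcal E_2$. So $\mathcal U_1 \subseteq \mathcal U_2$ is a subalgebra, and the two-sided ideal $\mathcal U_2\mathcal E_1$ satisfies $\mathcal U_2/\mathcal U_2\mathcal E_1 \cong \mathcal U_3$. By PBW, after choosing local $\O_Y$-splittings of the locally free sequence, $\mathcal U_2$ is locally free, hence flat, as a right $\mathcal U_1$-module. Hence
\[
\mathcal U_2 \otimes^{\mathbb L}_{\mathcal U_1} \O_Y \cong \mathcal U_2 \otimes_{\mathcal U_1}\O_Y \cong \mathcal U_3 .
\]
This gives a transitivity isomorphism
\[
\mathcal M \otimes^{\mathbb L}_{\mathcal U_2} \O_Y \cong \mathcal M\otimes^{\mathbb L}_{\mathcal U_2}\mathcal U_3 \otimes^{\mathbb L}_{\mathcal U_3}\O_Y \cong (\mathcal M\otimes^{\mathbb L}_{\mathcal U_1}\O_Y)\otimes^{\mathbb L}_{\mathcal U_3}\O_Y .
\]
In the last step, $\mathcal M\otimes^{\mathbb L}_{\mathcal U_1}\O_Y$ carries a residual right $\mathcal U_3$-action, and its left $\mathcal R$-structure is retained. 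Concretely, $\mathcal S^\bullet_{\mathcal R|\mathcal U_1}(\mathcal M)$ is a complex of $(\mathcal R,\mathcal U_3)$-bimodules up to homotopy. The cleanest way to get a genuine action is to use the relative resolution $\mathcal U_2\otimes_{\mathcal U_1}(\mathcal U_1\otimes\bigwedge^\bullet\mathcal E_1)$, or the standard Hochschild--Serre filtration on $\bigwedge^\bullet_{\O_Y}\mathcal E_2$ by the number of factors from $\mathcal E_1$.

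Then filter $\mathcal S^\bullet_{\mathcal R|\mathcal U_2}(\mathcal M)=\mathcal M\otimes_{\O_Y}\bigwedge^{-\bullet}_{\O_Y}\mathcal E_2$ by
\[
F^p = \text{image of } \mathcal M\otimes \textstyle\bigwedge^{\bullet-p}\mathcal E_1\wedge \bigwedge^{p}\mathcal E_2 .
\]
Because $\mathcal E_1$ is an ideal, the differential respects this filtration. The graded pieces are $\mathcal M\otimes\bigwedge^{q}\mathcal E_1\otimes\bigwedge^p\mathcal E_3$, using $\gr\bigwedge\mathcal E_2\cong\bigwedge\mathcal E_1\otimes\bigwedge\mathcal E_3$, which holds since the sequence is locally split. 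The $E^1$ differential is the $\mathcal U_1$-Chevalley--Eilenberg differential, so $E^1 = H^q\mathcal S_{\mathcal R|\mathcal U_1}(\mathcal M)\otimes\bigwedge^p\mathcal E_3$. Here the anchor-trivial hypothesis is used: the $\mathcal E_1$-part of the differential is $\O_Y$-linear, so tensoring with $\bigwedge^p\mathcal E_3$ commutes with taking cohomology. The $E^2$ differential is then identified with the $\mathcal U_3$-differential, which gives the stated $E^2$ term. Finally, convergence holds because the filtration is finite, of length $\le\rk\mathcal E_3$.

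I expect the main obstacle to be the $E^1\Rightarrow E^2$ identification. One must check that $H^q\mathcal S_{\mathcal R|\mathcal U_1}(\mathcal M)$ carries a well-defined right $\mathcal E_3$-action, since $\mathcal E_2$ acts and $\mathcal E_1$ acts trivially on homology. One must also check that the induced differential matches the Chevalley--Eilenberg differential of $\mathcal E_3$, including the anchor terms. I would verify this locally with an $\O_Y$-splitting $\mathcal E_2\cong\mathcal E_1\oplus\mathcal E_3$, mimicking the classical proof in \cite{Weibel}, and note that the resulting identification is independent of the splitting.
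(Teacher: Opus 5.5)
Your proposal is correct and is essentially the paper's argument. The paper only says the lemma follows as in the classical Hochschild--Serre spectral sequence, with $[\mathcal E_1,\O_Y]=0$ ensuring that $\mathcal S^\bullet_{\mathcal R|\mathcal U_1}(\mathcal M)$ is a complex of $(\mathcal R,\mathcal U_2)$-modules; your filtration of $\mathcal M\otimes_{\O_Y}\bigwedge^{-\bullet}_{\O_Y}\mathcal E_2$ by the number of $\mathcal E_1$-factors, using the anchor hypothesis for $\O_Y$-linearity of the $\mathcal E_1$-part of the differential, is the fleshed-out version of that.

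One minor point of emphasis: the differential preserves the filtration for any subalgebroid, so that is not where the ideal property is needed. The ideal property is what makes the adjoint action of $\mathcal E_1$ on $\bigwedge^p\mathcal E_3$ drop out of $d^0$, which is what gives $E^1\cong H^q\mathcal S^\bullet_{\mathcal R|\mathcal U_1}(\mathcal M)\otimes_{\O_Y}\bigwedge^p\mathcal E_3$.
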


\begin{proof}
  This is analogous to the case of the Hochschild--Serre spectral sequence for Lie algebras. Here, the assumption $[\mathcal E_1, \O_Y] = 0$ is needed to ensure that $\mathcal S_{\mathcal R|\mathcal U_1}^\bullet(\mathcal M)$ is a complex of $(\cR,\cU_2)$-modules.
\end{proof}

\begin{proof}[Proof of \cref{prop:HatTOrbit}]
Since $\beta$ is integrable, $\cM\{\beta\}$ underlies a $G$-equivariant line bundle (with character $\mu \cdot \restr{\nu}{P} \colon P \to \C^*$), so we may restrict the proof to the case that $\beta = 0$.

We know that $\mathcal D_Y \otimes_{\mathcal A_Y}^\mathbb L \cM = \mathcal S_{\mathcal D_Y|\mathcal A_Y}^\bullet(\mathcal D_Y \otimes_{\O_Y} \cM)$ by \cite[Lemma 2.12]{GS-Duality}. Since $G$ acts transitively on $Y$, we have a surjection $Z_Y \colon \O_Y \otimes \mathfrak g \twoheadrightarrow \Theta_Y$ of Lie algebroids, inducing a surjection $\mathcal A_Y = \mathcal U(\O_Y \otimes \mathfrak g) \twoheadrightarrow \mathcal U(\Theta_Y) = \mathcal D_Y$. Then
\[\mathcal E \coloneqq \ker(\O_Y \otimes \mathfrak g \twoheadrightarrow \Theta_Y)\]
satisfies $[\O_Y \otimes \mathfrak g,\mathcal E] \subseteq \mathcal E$ and thus is a Lie algebroid ideal in $\O_Y \otimes \mathfrak g$. Note that $\mathcal E$ is locally free of rank $d_G-d_{\hat{X}}$. By definition, we have \begin{equation} \label{eq:OCommutesWithE}
[\mathcal E,\O_Y] = 0.
\end{equation}

By \cref{lem:HochschildSerreLieAlgebroids}, we have a spectral sequence
\[E^2_{pq} = H^p \mathcal S_{\mathcal D_Y|\mathcal D_Y}^\bullet(H^q S_{\mathcal D_Y|\mathcal U(\mathcal E)}^\bullet(\mathcal D_Y \otimes_{\O_Y} \cM)) \Longrightarrow H^{p+q} \mathcal S_{\mathcal D_Y|\mathcal A_Y}^\bullet(\mathcal D_Y \otimes_{\O_Y} \cM).\]

In our case, $\mathcal S_{\mathcal D_Y|\mathcal U(\mathcal E)}^\bullet(\mathcal D_Y \otimes_{\O_Y} \cM) = \mathcal D_Y \otimes_{\O_Y} \cM \otimes_{\O_Y} \bigwedge^{-\bullet} \mathcal E$ is a complex of $(\mathcal D_Y,\mathcal A_Y)$-modules, where the left $\mathcal D_Y$-module structure is given by the action on the first factor, while the right $\mathcal A_Y$-module structure is given by
\[(P \otimes \xi_1 \wedge \dots \wedge \xi_\ell) \cdot \eta = P Z_Y(\eta) \otimes \xi_1 \wedge \dots \wedge \xi_\ell - \sum_{i=1}^\ell P \otimes \xi_1 \wedge \dots \wedge \xi_{i-1} \wedge [\eta,\xi_i] \wedge \xi_{i+1} \wedge \dots \wedge \xi_\ell\]
(for $P \in \mathcal D_Y \otimes_{\cO_Y} \cM$, $\xi_1,\dots,\xi_\ell \in \mathcal E$, $\eta \in \O_Y \otimes \mathfrak g$).
One checks that the right action of $\mathcal E$ on the cohomologies $H^{\ell}(\mathcal D_Y \otimes_{\O_Y} \cM \otimes_{\O_Y} \bigwedge^{\bullet} \mathcal E) \cong \mathcal D_Y \otimes_{\O_Y} H^{-\ell}(\cM \otimes_{\O_Y} \bigwedge^{-\bullet} \mathcal E)$ is trivial, giving rise to a $(\mathcal D_Y,\mathcal D_Y)$-bimodule structure on $\mathcal D_Y \otimes_{\O_Y} H^{-\ell}(\cM \otimes_{\O_Y} \bigwedge^{-\bullet} \mathcal E)$. Notice that the left action is given by left multiplication on the first factor, while the right action is induced from right multiplication on the first factor and a left $\mathcal D_Y$-module structure on $H^{-\ell}(\cM \otimes_{\O_Y} \bigwedge^{-\bullet} \mathcal E)$. Using \cite[Lemma 2.13]{GS-Duality}, we conclude from the spectral sequence above that
\[H^{-\ell}(\mathcal D_Y \otimes_{\cA_Y}^{\mathbb L} \cM) \cong (\mathcal D_Y \otimes_{\O_Y} H^{-\ell}(\cM \otimes_{\O_Y} {\textstyle \bigwedge^{-\bullet} \mathcal E})) \otimes_{\mathcal D_Y}^\mathbb{L} \O_Y \cong H^{-\ell}(\cM \otimes_{\O_Y} {\textstyle \bigwedge^{-\bullet} \mathcal E}).\]
The complex $\cC^\bullet(\cM, 0) \cong \cD_Y \otimes_{\cA_Y}^\mathbb{L} \cM$ is a $G$-equivariant object in $D^b_{qc}(\cD_{Y})$ in the sense mentioned in the proof of \cref{lem:RestToPoint}, as shown in \cite[proof of Proposition~3.3]{GS-Duality}. Therefore, by \cref{eq:EquiModTrivial}, it is isomorphic to $\O_Y \otimes_\C i_0^+ (\cD_Y \otimes_{\cA_Y}^\mathbb{L} \cM)$. Moreover, by \cref{rmk:Formality}, it is a formal object in $D^b_{qc}(\cD_Y)$, so to show the claim, it suffices to see that on the level of cohomologies, $H^{-\ell}i_0^+ (\cD_Y \otimes_{\cA_Y}^\mathbb{L} \cM) \cong H_\ell(\mathfrak p, \C_{\differential \mu})$.

For this, note that the differential of the complex $\cM \otimes_{\O_Y} \bigwedge^{-\bullet} \mathcal E$ commutes with restriction to the fiber over a point $y_0 \in Y$ (this uses \cref{eq:OCommutesWithE}), so
\begin{align*}
i_0^+ H^{-\ell}(\cD_Y \otimes_{\cA_Y}^\mathbb{L} \cM)
\cong i_0^+ H^{-\ell}(\cM \otimes_{\O_Y} {\textstyle \bigwedge^{-\bullet} \mathcal E})
&\cong H^{-\ell}(\restr{\cM}{y_0} \otimes_{\C} {\textstyle \bigwedge^{-\bullet} \restr{\mathcal E}{y_0}}) \\
&\cong H^{-\ell}(\C_{\differential \mu} \otimes_{\C} {\textstyle \bigwedge^{-\bullet} \mathfrak p}) = H_\ell(\mathfrak p,\C_{\differential\mu}),
\end{align*}
where $\mathfrak p \subseteq \mathfrak g$ is the Lie algebra of $P \coloneqq \operatorname{Stab}(y_0) \subseteq G$. Since $i_0$ is non-characteristic with respect to $H^{-\ell}(\cD_Y \otimes_{\cA_Y}^\mathbb{L} \O_Y)$ (since the latter are smooth $\cD_Y$-modules), we obtain
\[H^{-\ell} i_0^+ (\cD_Y \otimes_{\cA_Y}^\mathbb{L} \O_Y) = i_0^+ H^{-\ell}(\cD_Y \otimes_{\cA_Y}^\mathbb{L} \O_Y) \cong H_\ell(\mathfrak p, \C_{\differential\mu}),\]
concluding the proof.
\end{proof}

In order to proceed, we need the following folklore fact from Lie theory; we include a proof for the convenience of the reader.
\begin{lem}\label{lem:HomCohom}
  Let $P \subseteq G$ be a parabolic subgroup.
    Then
  \[H_\bullet(\mathfrak p, \C) \cong H_\bullet(P, \C).\]
\end{lem}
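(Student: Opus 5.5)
The plan is to reduce to a reductive Levi factor, where \cref{lem:VanishingLieHomReductiveGroup} already gives the answer. Write a Levi decomposition $P = L \ltimes U$, with $L$ connected reductive and $U$ the unipotent radical. On the Lie algebra side this reads $\mathfrak p = \mathfrak l \ltimes \mathfrak u$, with $\mathfrak u$ a nilpotent ideal.

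\emph{Topological side.} As a variety $U$ is isomorphic to an affine space, via the exponential map. Hence $P \cong L \times U$ deformation retracts onto $L$, and $H_\bullet(P,\C) \cong H_\bullet(L,\C)$. Since $L$ is connected reductive, \cref{lem:VanishingLieHomReductiveGroup} with the trivial representation gives $H_\bullet(L,\C)\cong H_\bullet(\mathfrak l,\C)$.

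\emph{Algebraic side.} We must show $H_\bullet(\mathfrak p,\C) \cong H_\bullet(\mathfrak l,\C)$. Use the Hochschild--Serre spectral sequence for the ideal $\mathfrak u\subseteq \mathfrak p$:
\[E^2_{pq} = H_p(\mathfrak l, H_q(\mathfrak u,\C)) \Longrightarrow H_{p+q}(\mathfrak p,\C).\]
This is the Lie algebra version of \cref{lem:HochschildSerreLieAlgebroids} over a point. Each $H_q(\mathfrak u,\C)$ is a finite-dimensional $L$-representation, and since $L$ is connected, $\mathfrak l$ acts through this group action. So \cref{lem:VanishingLieHomReductiveGroup} gives
\[E^2_{pq} \cong H_p(L,\C)\otimes H_q(\mathfrak u,\C)^L.\]
The claim then follows once we know $H_q(\mathfrak u,\C)^L = 0$ for $q>0$. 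The spectral sequence then collapses onto the row $q=0$, which is $H_p(\mathfrak l,\C)$.

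\emph{The main obstacle} is this invariant vanishing. I would argue via a central torus. Choose a cocharacter $\lambda\colon \C^*\to Z(L)$ defining the parabolic, so that $\mathfrak u$ is exactly the sum of the weight spaces of $\mathrm{Ad}\circ\lambda$ with strictly positive weights. The standard choice is a dominant cocharacter in the center of $L$ pairing positively with all roots in $\mathfrak u$. Then $\bigwedge^q\mathfrak u$ has only strictly positive $\lambda$-weights for $q>0$. Since $H_q(\mathfrak u,\C)$ is an $L$-equivariant subquotient of $\bigwedge^q\mathfrak u$, it also has only positive weights, so the central torus $\lambda(\C^*)\subseteq L$ has no invariants on it.

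Alternatively, one could invoke Kostant's theorem describing $H_q(\mathfrak u,\C)$ as a sum of irreducible $L$-modules with nonzero central character for $q>0$. I would present the torus-weight argument, as it is self-contained.
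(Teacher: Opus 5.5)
Your proposal is correct and follows the paper's proof essentially step by step. Both use the Levi decomposition and contractibility of the unipotent radical, then \cref{lem:VanishingLieHomReductiveGroup} for the reductive Levi factor, then the Hochschild--Serre spectral sequence, reducing to the vanishing of invariants on $H_q(\mathfrak u,\C)$ for $q>0$ via a weight argument; the only cosmetic difference is that you use the weights of a central cocharacter of the Levi factor (strictly positive on $\mathfrak u$), whereas the paper uses the weights of the Cartan subalgebra $\mathfrak h$, which on $\bigwedge^q\mathfrak u$ are nonzero sums of positive roots.
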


\begin{proof}
  Let $U$ be the unipotent radical of $P$ and let $Q \subseteq P$ be a Levi subgroup, so that $P \cong U \rtimes Q$, in particular $Q$ is reductive, see \cite[25.24]{Milne}. Since $U$ is contractible as a topological space, see \cite[Proposition~14.32]{Milne}, we have
  \[H_\bullet(P, \C) \cong H_\bullet(Q,\C).\]
  Since $Q$ is reductive, the homology of $Q$ can be computed by Lie algebra homology as in \cref{lem:VanishLieHomReductive},
  \[
  H_\bullet(Q,\C) \cong H_\bullet(\mathfrak q, \C).\]

  It remains to see that, on the level of Lie algebra homology, $H_\bullet(\fp,\C) \cong H_\bullet(\mathfrak q,\C)$. For this, consider the  Lie algebra-version of the Levi decomposition $\mathfrak p \cong \mathfrak u \oplus \mathfrak q$,
  where $\mathfrak u$ is  the nilradical  $\mathfrak p$  and $\mathfrak q \subseteq \mathfrak p$ is a suitable Lie subalgebra (in particular, the decomposition $\mathfrak p \cong \mathfrak u \oplus \mathfrak q$ is only an isomorphism of vector spaces  since $\fq$ is not an ideal in $\fp$ in general). From the Hochschild--Serre spectral sequence
  \[
  E^2_{k\ell} = H_k(\mathfrak q, H_\ell(\mathfrak u, \C)) \Longrightarrow H_{k+\ell}(\mathfrak p, \C),
  \]
  we see that it suffices to show that $H_k(\mathfrak q, H_\ell(\mathfrak u, \C)) = 0$ for all $k,\ell$ with $\ell \neq 0$. Note that the action of $\mathfrak q$ on $H_\ell(\mathfrak u, \C)$ here comes from an action of
  $Q \subseteq P$ on each term of the complex $\bigwedge^\bullet \mathfrak u$. Namely, these are the exterior powers of the adjoint $Q$-representation $\Ad \colon Q \to \GL(\mathfrak u)$ (the restriction of $\Ad \colon P \to \GL(\mathfrak p)$ to the ideal $\mathfrak u \subseteq \mathfrak p$ and to the subgroup $Q \subseteq P$). This $Q$-action is compatible with the differential of the complex $\bigwedge^\bullet \mathfrak u$, hence gives a $Q$-action on $H_\ell(\fu,\dC)$ which induces the $\mathfrak q$-action on these spaces used in the spectral sequence above.

    By \cref{lem:VanishingLieHomReductiveGroup}, in order to show that $H_k(\mathfrak q, H_\ell(\fu, \C)) = 0$ for all $k$ and $\ell \neq 0$, it therefore suffices to show that $H_\ell(\mathfrak u,\C)$ has no non-zero $Q$-invariants, or equivalently, no non-zero elements annihilated by $\mathfrak q$.
  For this, we consider the action of the Cartan subalgebra $\mathfrak h$.   We may assume with respect to a weight space decomposition of $\mathfrak g$,
  \[
  \mathfrak g = \mathfrak h \oplus \left(\bigoplus_{\alpha \in \Phi^+} \mathfrak g_\alpha \right)\oplus \left(\bigoplus_{\alpha \in \Phi^+} \mathfrak g_{-\alpha}\right),
  \]
  that the parabolic subalgebra $\mathfrak p$ is given as
  \[\mathfrak p = \mathfrak h \oplus \bigoplus_{\alpha \in \Phi^+} \mathfrak g_\alpha \oplus \bigoplus_{\alpha \in \Phi_I} \mathfrak g_{-\alpha}\]
  for $\Phi_I \coloneqq \Phi^+ \cap \N I$, where $I$ is some subset of the set of simple roots. Then
  \[
  \mathfrak u = \bigoplus_{\alpha \in \Phi^+ \setminus \Phi_I} \mathfrak g_\alpha \qquad \text{and} \qquad \mathfrak q = \mathfrak h \oplus\left( \bigoplus_{\alpha \in \Phi_I} \mathfrak g_\alpha\right) \oplus \left(\bigoplus_{\alpha \in \Phi_I} \mathfrak g_{-\alpha}\right).
  \]
  In particular, $\mathfrak h$ acts on $\mathfrak u$ with positive weights only and therefore, the induced representation $\bigwedge^\ell \mathfrak u$ for $\ell > 0$ decomposes into weight spaces with weights from
    the $\ell$-fold sum
  \[
  \underbrace{\Phi^+ + \dots + \Phi^+}_{\ell \text{ times}}\,\, \not\ni \,\,0.
  \]
  Since the differential of the complex $\bigwedge^{\ell} \fu$ is compatible with the $\fq$-action and therefore in particular with the weight decomposition of the terms, this yields a decomposition into weight spaces of the cohomology groups $H_\ell(\mathfrak u, \C)$ with non-zero weights only. In particular, these cohomology groups do not contain non-zero elements annihilated by $\mathfrak h$ and have in particular no non-zero $Q$-invariants.

  As discussed above, via the Hochschild--Serre spectral sequence, this implies
  \[H_\bullet(\mathfrak p, \C) \cong H_\bullet(\mathfrak q, \C),\]
  which together with the first two observations of the proof implies the claim.
  \end{proof}
Our main result in this section is then the following.
\begin{thm}\label{thm:ComplexOnY}
 Let $Y$ be a $G$-orbit such that the stabilizer $P = \operatorname{Stab}(y_0)$ over a point $y_0 \in Y$ is connected. Moreover, assume that $P$ can be realized as a parabolic subgroup of some connected algebraic group (which might differ from $G$). Then we have $D^b_h(\cD_Y)$-isomorphisms
  \[
    \pi_\dag \cO_G[d_P] \cong  \mathcal D_Y \otimes_{\mathcal A_Y}^{\mathbb L} \O_Y \qquad
    \text{and} \qquad \pi_+ \cO_G \cong  \mathcal D_Y \otimes_{\mathcal A_Y}^{\mathbb L} \omega_Y^\vee.
  \]
              \end{thm}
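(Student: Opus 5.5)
The plan is to compare both sides through their cohomology. Each side is a $G$-equivariant object of $D^b_{qc}(\cD_Y)$: the left-hand sides by the discussion in the proof of \cref{lem:RestToPoint}, and the right-hand sides by \cite[proof of Proposition~3.3]{GS-Duality}. By \cref{rmk:Formality}, equivariant objects for a transitive action with connected stabilizer are formal and of the form $\cO_Y\otimes_\C(\text{graded vector space})$. So it suffices to identify the graded vector spaces of cohomology on both sides, degree by degree.

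\textbf{Step 1 (left-hand sides).} By \cref{lem:RestToPoint}, $\pi_\star\cO_G\cong\cO_Y\otimes_\C a_\star\cO_P$ for $\star\in\{+,\dag\}$, so I only need the cohomology of $a_\star\cO_P$. By de Rham, $H^k a_+\cO_P\cong H^{k+d_P}(P,\C)$. Since $a_\dag=\mathbb D a_+\mathbb D$ and $\mathbb D\cO_P\cong\cO_P$, we get
\[
H^k a_\dag\cO_P\cong (H^{-k}a_+\cO_P)^\vee\cong H_{d_P-k}(P,\C),
\]
and therefore $H^k(a_\dag\cO_P[d_P])\cong H_{-k}(P,\C)$.

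\textbf{Step 2 (first isomorphism).} \cref{prop:HatTOrbit} with $\cM=\cO_Y$ and $\beta=0$ gives $\cD_Y\otimes^{\mathbb L}_{\cA_Y}\cO_Y\cong\cO_Y\otimes_\C H_{-\bullet}(\mathfrak p,\C)$. Because $P$ is a parabolic subgroup of some connected group, \cref{lem:HomCohom} gives $H_\bullet(\mathfrak p,\C)\cong H_\bullet(P,\C)$. This matches Step 1 degree-wise, and formality then yields $\pi_\dag\cO_G[d_P]\cong\cD_Y\otimes^{\mathbb L}_{\cA_Y}\cO_Y$.

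\textbf{Step 3 (second isomorphism).} Apply \cref{prop:HatTOrbit} to the equivariant line bundle $\cM=\omega_Y^\vee$. Its fibre at $y_0$ is $\det(T_{y_0}Y)=\det(\mathfrak g/\mathfrak p)$, so its character $\mu$ satisfies
\[
\differential\mu=\trace\,\mathrm{ad}_{\mathfrak g/\mathfrak p}=\trace\,\mathrm{ad}_{\mathfrak g}|_{\mathfrak p}-\trace\,\mathrm{ad}_{\mathfrak p}=-\trace\,\mathrm{ad}_{\mathfrak p},
\]
where the last equality holds because $\mathfrak g$ is reductive, hence unimodular. Next I use Poincaré duality for Lie algebra (co)homology of the finite-dimensional Lie algebra $\mathfrak p$, namely $H_k(\mathfrak p,M)\cong H^{d_P-k}(\mathfrak p,M\otimes\bigwedge^{d_P}\mathfrak p)$, where $\bigwedge^{d_P}\mathfrak p$ carries the character $\trace\,\mathrm{ad}_{\mathfrak p}$. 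This gives
\[
H_k(\mathfrak p,\C_{-\trace\,\mathrm{ad}_{\mathfrak p}})\cong H^{d_P-k}(\mathfrak p,\C)\cong H_{d_P-k}(\mathfrak p,\C)^\vee\cong H^{d_P-k}(P,\C),
\]
using \cref{lem:HomCohom} again at the end. So $H^{-k}(\cD_Y\otimes^{\mathbb L}_{\cA_Y}\omega_Y^\vee)\cong\cO_Y\otimes H^{d_P-k}(P,\C)$, which by Step 1 is $\cO_Y\otimes H^{-k}a_+\cO_P$. Formality again gives $\pi_+\cO_G\cong\cD_Y\otimes^{\mathbb L}_{\cA_Y}\omega_Y^\vee$.

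\textbf{Main obstacle.} I expect Step 3 to be the delicate part. The sign and normalisation of the character of $\omega_Y^\vee$ must be pinned down correctly, including the left/right and dual conventions behind $\cM\{\beta\}$. The twisted Lie algebra duality must also be checked to produce exactly the shift $d_P$.

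If the sign bookkeeping becomes unclear, I would use an alternative route. Apply holonomic duality to the first isomorphism: $\mathbb D\pi_\dag\cO_G=\pi_+\cO_G$, and the duality theory of \cite{GS-Duality} should identify $\mathbb D(\cD_Y\otimes^{\mathbb L}_{\cA_Y}\cO_Y)$ with $\cD_Y\otimes^{\mathbb L}_{\cA_Y}\omega_Y^\vee$ up to the shift $d_P=d_G-d_Y$. Either way, the final identification reduces to a check of cohomology, thanks to formality.
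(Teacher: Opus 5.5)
Your proof is correct. For the first isomorphism you follow exactly the paper's chain (\cref{prop:HatTOrbit}, \cref{lem:HomCohom}, \cref{lem:RestToPoint}, then comparison of formal objects).

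For the second isomorphism, the paper uses what you list as your fallback. It dualizes the first isomorphism, combining $\mathbb D\pi_\dag=\pi_+\mathbb D$ with the duality theorem $\mathbb D(\cD_Y\otimes^{\mathbb L}_{\cA_Y}\O_Y)\cong\cD_Y\otimes^{\mathbb L}_{\cA_Y}\omega_Y^\vee[-d_P]$ of \cite[Theorem~3.9]{GS-Duality}.

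Your main route is different. You apply \cref{prop:HatTOrbit} directly to the equivariant line bundle $\omega_Y^\vee$. You identify its isotropy character as $\trace\,\mathrm{ad}_{\mathfrak g/\fp}=-\trace\,\mathrm{ad}_{\fp}$, using unimodularity of the reductive $\mathfrak g$. You then invoke the twisted Poincaré duality $H_k(\fp,M)\cong H^{d_P-k}(\fp,M\otimes\bigwedge^{d_P}\fp)$. All of this checks out.

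Your sign is the one compatible with the convention $\omega_V^\vee\cong\O_V\{\trace\circ\differential\rho\}$ used in \cref{prop:ThatRestricted}. You are right that the sign is the crux: already for $Y=\P^1$, $P=B$, one has $H_\bullet(\mathfrak b,\C_{\trace\,\mathrm{ad}_{\mathfrak b}})=0$ but $H_\bullet(\mathfrak b,\C_{-\trace\,\mathrm{ad}_{\mathfrak b}})\neq 0$.

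The two routes buy different things. Yours avoids the $\cD$-module duality theorem of \cite{GS-Duality}. It uses only \cref{prop:HatTOrbit} for a nontrivial line bundle together with classical Lie algebra facts, and amounts to a fiberwise proof of the relevant special case of that duality. The paper's route is a one-liner that needs no character bookkeeping. It also exhibits the second isomorphism as the holonomic dual of the first, rather than as an abstract isomorphism obtained by matching cohomology of formal objects.
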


\begin{proof}
  The first claim follows by putting together the previous results. Namely, we have
  \[
  \cD_Y \otimes_{\cA_Y}^\mathbb{L} \O_Y
  \stackrel{\ref{prop:HatTOrbit}}{\cong} \O_Y \otimes_{\C} H_{-\bullet}(\mathfrak p, \C)
  \stackrel{\ref{lem:HomCohom}}{\cong} \O_Y \otimes_{\C} H_{-\bullet}(P, \C) \cong \O_Y \otimes_\C a_\dag \O_P[d_P] \stackrel{\ref{lem:RestToPoint}}{\cong} \pi_\dag \O_G[d_P].
  \]
    The second claim follows by applying the duality result
  \[\mathbb D(\mathcal D_Y \otimes_{\mathcal A_Y}^{\mathbb L} \O_Y) \cong \mathcal D_Y \otimes_{\mathcal A_Y}^{\mathbb L} \omega_Y^\vee[-d_P]\]
  from \cite[Theorem~3.9]{GS-Duality}.
\end{proof}

As a consequence, using \cref{prop:FunctConstrHatT}, we obtain the following result for the case of the affine cone $\hat X$ over a projective homogeneous space $X = G_0/P_0$ inside a $G_0$-equivariant embedding $X \hookrightarrow \P V$, $V = H^0(X,\Ell)^\vee$, considered with a group action of $G = \C^* \times G_0$, as in \cref{sec:Coloc}.

\begin{thm}\label{theo:ColocFunct}
If $\dim X > 0$ and $\hat X$ is Gorenstein, then we have  $D^b_h(\mathcal{D}_V)$-isomorphisms
$$
\hat{T}(\rho,\hat{X},\gamma) \cong \iota_\dag \pi_\dag \cO_G[d_{P_0}] \cong \iota_\dag \cO_Y \otimes_\dC a_\dag \cO_{P_0}[d_{P_0}]
$$
and
$$
\hat{T}(\rho,\hat{X},0) \cong \iota_+ \pi_+ \cO_G \cong \iota_+ \cO_Y \otimes_\C a_+ \cO_{P_0},
$$
where $\gamma \colon \mathfrak g \to \C$ is as in \cref{rem:GorensteinGamma}.
\end{thm}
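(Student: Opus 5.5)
The plan is to combine \cref{prop:FunctConstrHatT}, \cref{rem:GorensteinGamma} and \cref{thm:ComplexOnY}, applied to the orbit $Y=\hat X\setminus\{0\}$ of $G=\C^*\times G_0$, and then \cref{lem:RestToPoint} for the second isomorphisms. Throughout, $y_0\in Y$ denotes a point over the base point of $X$, and $P=\operatorname{Stab}_G(y_0)$.

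\textbf{Step 1: the stabilizer.} We first check that the hypotheses of \cref{thm:ComplexOnY} hold for $P$. Let $\chi\colon P_0\to\C^*$ be the character through which $P_0$ acts on the fibre of $\Ell^\vee$ over the base point (or on the line $\C y_0$). Then
\[
P=\{(t,p)\in \C^*\times P_0 \suchthat t\,\chi(p)=1\},
\]
so $P$ is the graph of $\chi^{-1}$. Hence $P\cong P_0$, which is connected as a parabolic subgroup of the connected group $G_0$. This also gives $d_P=d_{P_0}$, $a_\star\cO_P\cong a_\star\cO_{P_0}$, and $H_\bullet(\mathfrak p,\C)\cong H_\bullet(\mathfrak p_0,\C)$. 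Moreover, $P$ is realized as a parabolic subgroup of a connected group, namely of $G_0$ via this isomorphism. If one prefers to stay inside $G$, note that $P$ is not parabolic in $G$ because of the torus factor. However, \cref{thm:ComplexOnY} only needs such a realization in \emph{some} connected group, and the proof of \cref{lem:HomCohom} uses only the Levi decomposition of $P$.

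\textbf{Step 2: the case $\gamma$.} By \cref{rem:GorensteinGamma}, $\omega_Y\cong\cO_Y\{-\gamma\}$, so $(\omega_Y\{\gamma\})^\vee\cong\cO_Y$. Since $\gamma\neq 0$ (its restriction to $\C^*$ is $\ell\neq0$), \cref{prop:FunctConstrHatT} gives
\[
\hat T(\rho,\hat X,\gamma)\cong\iota_\dag(\cD_Y\otimes^\dL_{\cA_Y}\cO_Y).
\]
The first isomorphism of \cref{thm:ComplexOnY} turns the right-hand side into $\iota_\dag\pi_\dag\cO_G[d_{P_0}]$. Then \cref{lem:RestToPoint} with $\star=\dag$ rewrites this as $\iota_\dag(\cO_Y\otimes_\C a_\dag\cO_{P_0})[d_{P_0}]$. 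Since $a_\dag\cO_{P_0}$ is a complex of finite-dimensional vector spaces, $\iota_\dag$ commutes with $\otimes_\C$ by additivity and shifts, and we obtain the claimed form.

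\textbf{Step 3: the case $0$.} The Gorenstein part of \cref{prop:FunctConstrHatT} gives $\hat T(\rho,\hat X,0)\cong\iota_+(\cD_Y\otimes^\dL_{\cA_Y}\omega_Y^\vee)$. The second isomorphism of \cref{thm:ComplexOnY} identifies this with $\iota_+\pi_+\cO_G$. Finally, \cref{lem:RestToPoint} with $\star=+$, together with the same commutation of $\iota_+$ with $\otimes_\C$ of a finite-dimensional complex, gives $\iota_+\cO_Y\otimes_\C a_+\cO_{P_0}$.

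\textbf{Main obstacle.} The only non-formal point is Step 1: verifying that the stabilizer in $\C^*\times G_0$ is connected and isomorphic to $P_0$ compatibly with all identifications. One must also make sure that the shift $d_P$ in \cref{thm:ComplexOnY} equals $d_{P_0}$, which it does since $\dim G-\dim Y=(1+d_{G_0})-(d_X+1)=d_{P_0}$. The remaining steps are bookkeeping.
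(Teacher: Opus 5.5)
Your proposal is correct and follows the paper's proof. Both combine \cref{prop:FunctConstrHatT} and \cref{rem:GorensteinGamma} with \cref{thm:ComplexOnY} for $Y=\hat X\setminus\{0\}$, use \cref{lem:RestToPoint} for the tensor-product forms, and rely on one non-formal input: the stabilizer $P\subseteq \C^*\times G_0$ is isomorphic to $P_0$ and connected. You justify that input differently, describing $P$ as the graph of a character and invoking connectedness of parabolic subgroups of connected groups, whereas the paper uses connectedness of $G$ and $\pi_1(X)=\{1\}$; your version is a valid substitute. Your explicit checks that $\gamma\neq 0$ and $d_P=d_{P_0}$ supply details the paper leaves implicit.
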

\begin{proof}
  This follows from \cref{prop:FunctConstrHatT} and \cref{rem:GorensteinGamma} together with \cref{thm:ComplexOnY} applied to $Y = \hat X \setminus \{0\}$. We observe that the stabilizer $P \subseteq G = \C^* \times G_0$ of a point $y_0 \in \hat X \setminus \{0\}$ is isomorphic to $P_0$ which is connected as follows from the connectedness of $G$ and the fact that $\pi_1(X)=\{1\}$ (see \cite[Remark 4.30]{GRSSW}).
  \end{proof}

 In consequence, the complexes $\hat{T}(\rho,\hat{X},\gamma)$ and $\hat{T}(\rho,\hat{X},0)$ have a Hodge theoretic meaning.
\begin{cornoproof}\label{cor:ThatHodge}
Under the assumptions of  \cref{theo:ColocFunct}, the complexes
$\hat{T}(\rho,\hat{X},\gamma)[-d_{P_0}]$ and $\hat{T}(\rho,\hat{X},0)$ underly
the objects
$\iota_! \pi_! {^p}\dQ_G^H$ and $\iota_* \pi_* {^p}\dQ_G^H$ in $D^b(\MHM(V))$, respectively.
\end{cornoproof}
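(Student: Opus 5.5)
The plan is to read the statement off \cref{theo:ColocFunct} and then use the compatibility of the functors on $D^b\MHM$ with the corresponding functors on $D^b_h(\cD)$ under the forgetful functor. By \cref{theo:ColocFunct} we have
\[
\hat{T}(\rho,\hat{X},\gamma)[-d_{P_0}]\cong \iota_\dag\pi_\dag\cO_G
\qquad\text{and}\qquad
\hat{T}(\rho,\hat{X},0)\cong\iota_+\pi_+\cO_G
\]
in $D^b_h(\cD_V)$. So it suffices to show that $\iota_\dag\pi_\dag\cO_G$ underlies $\iota_!\pi_!{^p}\dQ^H_G$, and that $\iota_+\pi_+\cO_G$ underlies $\iota_*\pi_*{^p}\dQ^H_G$.

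First, ${^p}\dQ^H_G=\dQ^H_G[d_G]$ is the pure Hodge module whose underlying $\cD_G$-module is $\cO_G$, since $G$ is smooth. Second, Saito's theory shows that for any morphism of algebraic varieties $f$, the functors $f_*$ and $f_!$ on $D^b\MHM$ lift $f_+$ and $f_\dag$ on $D^b_h(\cD)$. The maps $\pi\colon G\to Y$ and $\iota\colon Y\hookrightarrow V$ are morphisms of smooth varieties, and $\iota$ is locally closed, so this applies to both. The compatibility is functorial and respects composition: $(\iota\circ\pi)_*=\iota_*\pi_*$, and the same holds for shriek pushforwards. Applying this compatibility to ${^p}\dQ^H_G$ gives the two claims.

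The one point needing care is the normalization of shifts. The paper uses the convention that a smooth $\cO$-module corresponds to a perverse shift ${^p}\dQ^H$, with $f_+$ corresponding to $f_*$. Under that convention the shift $[d_{P_0}]$ in the $\gamma$-case has to be moved to the left-hand side, which is exactly why the statement uses $\hat{T}(\rho,\hat{X},\gamma)[-d_{P_0}]$. The $0$-case carries no shift.

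I do not expect a serious obstacle. At most one should note that the isomorphisms of \cref{theo:ColocFunct} are isomorphisms in $D^b_h(\cD_V)$, so ``underlies'' is meant up to isomorphism in that category. That is the sense used in the corollary.
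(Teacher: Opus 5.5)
Your proposal is correct and is the argument the paper has in mind; the paper states the corollary without proof, as an immediate consequence of \cref{theo:ColocFunct}. You rewrite the isomorphisms as $\hat T(\rho,\hat X,\gamma)[-d_{P_0}]\cong\iota_\dag\pi_\dag\cO_G$ and $\hat T(\rho,\hat X,0)\cong\iota_+\pi_+\cO_G$, then use that ${^p}\dQ^H_G$ has underlying module $\cO_G$ and that $\iota_!,\pi_!$ and $\iota_*,\pi_*$ on $D^b\MHM$ lift $\iota_\dag,\pi_\dag$ and $\iota_+,\pi_+$, which is exactly right, including your handling of the shift $[d_{P_0}]$.
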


Recall from \cref{prop:LocCohomCone} that
\[H^k \iota_+ \O_Y \cong \begin{cases}
  H_{\hat X}^{d_V-d_{\hat X}+k} \O_V &\text{if }k \in [0,d_{\hat X}-2] \\[0.25em]
  H_{\{0\}}^{d_V} \O_V & \text{if }k = d_{\hat X}-1 \\[0.25em]
  0 &\text{otherwise.}
\end{cases}\]

Combining this with \cref{theo:ColocFunct}, we obtain that the cohomologies of $\hat{T}(\rho, \hat X, 0)$ are described in terms of the local cohomology of $V$ with respect to $\hat X$ and the homology of the parabolic subgroup $P$.
We can also express the local cohomological defect of $\hat{X}$ in terms of the cohomology groups of $\hat{T}$. Recall that for a subvariety $Z'\subseteq Z$ of a smooth algebraic variety $Z$, the local cohomological defect of $Z$ (which is independent of the embedding $Z'\hookrightarrow Z$) is
$$
\operatorname{lcdef}(Z')\coloneqq\max_i\left\{H^i_{Z'}(Z)\neq 0\right\}-\codim_Z(Z').
$$
\begin{cornoproof}\label{cor-lcdef}
Under the hypotheses of \cref{theo:ColocFunct}, we have
\[H^{-k} \hat T(\rho, \hat X, 0) \cong \big(H_{\{0\}}^{d_V} \O_V \otimes_\C H^{d_P-d_{\hat X}+1-k}(P,\C)\big) \oplus
\bigoplus_{\substack{i+j=d_P-k \\ i \in [0,d_{\hat X}-2] \\ j \in [0,d_P-d_{\hat X}+1]}} H^{d_V-d_{\hat X}+i}_{\hat X} \O_V \otimes_\C H^j(P,\C).\]

In particular, as $\rk H^{-k} T(0) =\dim_\dC i_0^+ H^{-k} \hat{T}(0)$,
\begin{eqnarray*}
\operatorname{lcdef}(\hat X) = d_{\hat X}-1-\min\left\{k \mid  \rk H^{-k} T(0)> \dim H^{d_P-k}(P,\C)\right\}.
\end{eqnarray*}
\end{cornoproof}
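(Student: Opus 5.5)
The plan is to compute $H^{-k}\hat T(\rho,\hat X,0)$ directly from \cref{theo:ColocFunct}. That theorem gives
\[
\hat T(\rho,\hat X,0)\cong \iota_+\cO_Y\otimes_\C a_+\cO_{P_0},
\]
and here $P\cong P_0$. Since the second factor is a complex of vector spaces, no Tor issues arise, and the Künneth formula gives
\[
H^{-k}\hat T(\rho,\hat X,0)\cong\bigoplus_{p+q=-k}H^p\iota_+\cO_Y\otimes_\C H^q a_+\cO_P .
\]
For the factor $a_+\cO_P$ with $a\colon P\to\{pt\}$ we use $H^q a_+\cO_P\cong H^{q+d_P}(P,\C)$, which is the de Rham complex shifted by $d_P$. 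The factors $H^p\iota_+\cO_Y$ are read off from \cref{prop:LocCohomCone}: they vanish outside $[0,d_{\hat X}-1]$, give local cohomology $H^{d_V-d_{\hat X}+p}_{\hat X}\cO_V$ for $p\le d_{\hat X}-2$, and give $H^{d_V}_{\{0\}}\cO_V$ for $p=d_{\hat X}-1$.

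Next comes the bookkeeping. For $p=i\in[0,d_{\hat X}-2]$ we have $q=-k-i$, so the $P$-cohomology degree is $j=d_P-k-i$, that is $i+j=d_P-k$. The term $p=d_{\hat X}-1$ gives $j=d_P-k-d_{\hat X}+1$, which matches the first summand of the statement. For the range $j\in[0,d_P-d_{\hat X}+1]$ I will argue as follows. $P$ is homotopic to its Levi factor $Q$, which is reductive of dimension $d_P-\dim U$. Nonvanishing of $H^j(P,\C)$ forces $j\le\dim_\R K=\dim_\C Q$, where $K$ is a maximal compact subgroup of $Q$. Then $\dim Q\le d_P-d_{\hat X}+1$, because $\dim U=\dim G_0/P_0$ for the opposite unipotent radical, so $\dim U=d_X=d_{\hat X}-1$. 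Hence the range restriction only records where the summands are nonzero.

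For the lcdef formula, take fibers at a generic point, i.e.\ apply $i_0^+$ at a point of $V\setminus\hat X$. Over such a point the local cohomology modules vanish, so the rank there is zero; the generic rank of $T=\mathrm{FL}(\hat T)$ is instead the fiber at $0$ after Fourier transform, as in the authors' remark. The summand $H^{d_V}_{\{0\}}\cO_V$ transforms to $\cO$, and it contributes exactly $\dim H^{d_P-d_{\hat X}+1-k}(P)$ to the rank. Local cohomology modules supported on the cone $\hat X$ are monodromic, and their Fourier transforms are nonzero with nonzero generic rank whenever the modules are nonzero, since they are not supported at the origin. So the inequality $\rk H^{-k}T(0)>\dim H^{d_P-k}(P)$, after the index shift, detects a nonzero $H^{d_V-d_{\hat X}+i}_{\hat X}\cO_V$ with $i+j=d_P-k$. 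Taking the minimal $k$, we get $j$ maximal ($j=d_P$ top-dimensional cohomology being nonzero) and $i$ maximal, which gives $\operatorname{lcdef}=i_{\max}=d_{\hat X}-1-k_{\min}$.

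The main obstacle is this last comparison. We must confirm that the extra rank beyond the contribution of $H_{\{0\}}$ comes precisely from the term with $j=d_P$. Concretely, I would first check that the top cohomology of $P$ is in degree $\dim Q$ and is nonzero, and that the threshold indices in the formula align with $j=\dim Q$. This should be the intended reading, since the formula as stated uses $H^{d_P-k}(P)$ for the point contribution.
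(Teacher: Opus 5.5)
Your derivation of the direct-sum decomposition is correct and is the intended argument. It combines Künneth for $\iota_+\O_Y\otimes_\C a_+\O_{P_0}$, the identification $H^q a_+\O_P\cong H^{q+d_P}(P,\C)$, and \cref{prop:LocCohomCone}. It also uses the observation that $H^j(P,\C)=0$ for $j>\dim Q=d_P-d_X$, so the $j$-range only records where the summands are nonzero.

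The $\operatorname{lcdef}$ part, however, has a genuine gap. The first problem is the threshold mismatch you flag; it cannot be settled by a reading with $j=d_P$. Since $d_X>0$, we have $H^{d_P}(P,\C)=0$; the top nonzero degree is $d_P-d_X$, with $H^{d_P-d_X}(P,\C)\cong\C$.

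Your own decomposition at $k=0$ gives $H^0\hat T(\rho,\hat X,0)\cong H^{d_V}_{\{0\}}\O_V$. Hence $\rk H^0T(0)=1>0=\dim H^{d_P}(P,\C)$. Read literally, the displayed formula would then give $\operatorname{lcdef}(\hat X)=d_{\hat X}-1$. That contradicts $H^{d_V-1}_{\hat X}\O_V=H^{d_V}_{\hat X}\O_V=0$, established before \cref{prop:LocCohomCone}.

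What you can actually prove, and what the figure after the corollary depicts, is the version whose threshold is the point contribution $\dim H^{d_P-d_{\hat X}+1-k}(P,\C)$. Let $r_i$ be the generic rank of the Fourier transform of $H^{d_V-d_{\hat X}+i}_{\hat X}\O_V$. Then $\rk H^{-k}T(0)-\dim H^{d_P-d_{\hat X}+1-k}(P,\C)=\sum_{i+j=d_P-k}r_i\dim H^j(P,\C)$. The least $k$ with a positive term is $k=d_X-\max\{i\mid r_i>0\}$, attained at $j=d_P-d_X$, not at $j=d_P$.

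The second problem is that your justification of $r_i>0$ is wrong in both directions. For $i\geq 1$ these modules \emph{are} supported at the origin, because $\hat X\setminus\{0\}$ is smooth. By \cref{cor:WeightIota}(3) they are $i_{0,+}\KH^{d_X+i}(X,\C)$ up to Tate twist. Their transforms are therefore free of rank $\dim\KH^{d_X+i}(X,\C)$, so $r_i>0$ exactly when the module is nonzero. That is the correct argument.

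For $i=0$ the module is not supported at $0$, but this says nothing about the generic rank of its transform; for instance, the transform of $\O_V$ is $\delta_0$. When $\operatorname{lcdef}(\hat X)=0$ you therefore still need a separate proof that $r_0>0$. For these monodromic modules this amounts to showing that $T^*_0V$ occurs in the characteristic cycle of $H^{d_V-d_{\hat X}}_{\hat X}\O_V$. It is automatic if $\KH^{d_X}(X,\C)\neq 0$, thanks to the quotient $i_{0,+}\KH^{d_X}(X,\C)$ from \cref{cor:WeightIota}(2). Otherwise it has to come from the intersection cohomology part.

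In the paper, this rank count is routed through the identity $\rk H^{-k}T(0)=\dim_\C i_0^+H^{-k}\hat T(0)$. Your proposal mentions that identity but never uses it.
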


The situation of \cref{cor-lcdef} is illustrated by the diagram below.
\begin{center}
\begin{tikzpicture}[x=1.05cm,y=1.05cm, line cap=round, line join=round]

\definecolor{skblue}{RGB}{0,0,0}
\definecolor{skred}{RGB}{190,60,60}

\coordinate (RBL) at (0,0);
\coordinate (RTR) at (2.15,4.0);

\draw[skblue, line width=1.2pt] (RBL) rectangle (RTR);

\path[pattern=north east lines, pattern color=skblue!55]  (0,0) rectangle (0.75,4.0);

\fill[skblue!18] (2.02,0) rectangle (2.15,4.0);
\draw[skblue, line width=1.2pt] (2.02,0) -- (2.02,4.0);

\node[skblue!80] at (1.2,2) {\Large $0$};

\draw[skblue, line width=1.1pt] (-0.35,0) -- (-0.35,4.0);
\draw[skblue, line width=1.1pt] (-0.43,4.0) -- (-0.27,4.0);
\draw[skblue, line width=1.1pt] (-0.43,0.0) -- (-0.27,0.0);

\node[skblue] at (-1,4) {$\SC -(d_{\hat X}-1)$};
\node[skblue] at (-0.8, 0) {$\SC -d_P$};

\draw[skblue, line width=1.1pt, -{Latex[length=2.2mm]}] (-0.10,6.15) -- (3.35,6.15);
\draw[skblue, line width=1.1pt] (0.7,6.02) -- (0.7,6.27);
\draw[skblue, line width=1.1pt] (2.05,6.02) -- (2.05,6.27);

\node[skblue] at (0.8,6.4) {$\SC\text{lcdef}(\hat{X})$};
\node[skblue] at (2.0,6.4) {$\SC d_{\hat X}-1$};
\node[skblue] at (3.55,6.27) {$i$};

\draw[skblue, line width=1.1pt, -{Latex[length=2.2mm]}] (0,0) -- (0,7);

\draw[skred, line width=1.2pt] (-0.55,6.72) -- (3.10,2.92);
\draw[skred, line width=1.2pt] (-0.55,6.32) -- (3.10,2.57);
\draw[skred, line width=1.2pt] (-0.50,5.31) -- (3.10,1.60);

\fill[skred] (0.75,4.00) circle (0.07);   \fill[skred] (2.1,2.63) circle (0.07);
\fill[skblue] (2.1,3.95) circle (0.07);

\draw[skblue, line width=1.1pt] (2.1,3.8) -- (2.55,4.00);

\node[skblue, anchor=west] at (2.60,3.95)
  {\small $H_{\{0\}}^{d_V} \O_V \otimes_\dC H^{d_P+j}(P,\C)$};

\node[skred, anchor=west] at (3,2.90) {\small $H^{0} \,\hat{T}(0)$};
\node[skred, anchor=west] at (3,2.50) {\small $H^{-1} \,\hat{T}(0)$};
\node[skred, anchor=west] at (3,1.53) {\small $H^{d_{\hat X}-1-\text{lcdef}(\hat{X})} \,\hat{T}(0)$};

\node[skblue, anchor=north west] at (1,-0.5)
  {\small $H_{\hat{X}}^{d_V-d_{\hat X}+i}\cO_V \otimes_\C H^{d_P+j}(P,\dC)$};
\draw[skblue, line width=1.1pt, -{Latex[length=2.2mm]}] (2.5,-0.5) -- (0.5,2);

\node[skblue] at  (0.1,7.20) {$j$};

\end{tikzpicture}
\end{center}

Combining \cref{cor-lcdef} with the weight estimations from
\cref{cor:WeightIota},we obtain the following results on the weights of the the cohomologies of $\hat{T}(\rho,\hat{X},0)$ for the case where $P=B$ is a Borel subgroup. In general, due to the non-pureness of the cohomologies of $P$, this estimation becomes more complicated.
\begin{cor}\label{cor:WeightComplexThat}
Under the hypotheses of \cref{theo:ColocFunct}, if $P=B$ is a Borel subgroup, then, for $k \in [0,d_P]$, we have:
$$
H^{-k} \hat T(\rho, \hat X, 0) \cong \left(H_{\{0\}}^{d_V} \O_V\right) \oplus \left(
\bigoplus_{i \in [0, d_{\hat X}-1] \cap [d_{\hat X}-1-k,d_P-k]}
H^{d_V-d_{\hat X}+i}_{\hat X} \O_V\right) .
$$
Moreover, we have
\[
\textup{weights}\left(H^{-k} \hat{T}(0)\right)
\subseteq\left[d_{\hat X}+2d_P+1-2k-\operatorname{lcdef}(\hat X),d_{\hat X}+2d_P+1-2k\right] \cup \{2d_P-2k+1\},
\]
and more precisely
\begin{align*}
\textup{weights}\left(H^{-k} \hat{T}(0)\right)
\subseteq&\left\{d_{\hat X}+2d_P+1-2k-i \mid i \in ([0,\, \operatorname{lcdef}(\hat X)] \cup \{d_{\hat X} -1\}) \cap [d_{\hat X}-1-k,\, d_P-k] \right\}
\end{align*}
with
\[Gr^W_{d_{\hat X}+2d_P+1-2k-i}H^{-k} \hat T(0) \cong
\begin{cases}
  \coker(H^0\iota_\dag \O_{\hat X} \to H^0\iota_+ \O_{\hat X}) &\text{if } i = 0 \\[0.25em]
  H^{d_V-d_{\hat X}+i}_{\hat X} \O_V \oplus \iota_{\dag +} \O_{\hat X \setminus \{0\}} &\text{if } i = 1 \text{ and } k > d_{\hat X}-2, \\[0.25em]
  H^{d_V-d_{\hat X}+i}_{\hat X} \O_V &\text{if } i = 1 \text{ and } k = d_{\hat X}-2, \\[0.25em]
  H^{d_V-d_{\hat X}+i}_{\hat X} \O_V &\text{if } i \in [2,\operatorname{lcdef}(\hat X)], \\[0.25em]
  H^{d_V}_{\{0\}} \O_V &\text{if } i = d_{\hat{X}}-1.
\end{cases}\]

In particular, for $k \in [0, d_{\hat X}-2-\operatorname{lcdef}(\hat X)]$, the mixed Hodge module associated with $H^{-k} \hat T(\rho, \hat X, 0)$ by \cref{cor:ThatHodge} is pure of weight $2d_P-2k+1$.

\end{cor}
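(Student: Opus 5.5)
The plan is to combine three inputs: the formality and Künneth-type description of \cref{theo:ColocFunct}, the topology of a Borel subgroup, and the weight computations of \cref{cor:WeightIota} transported to local cohomology via \cref{prop:LocCohomCone} and \cref{cor:weightLocCohom}.

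\emph{Step 1: cohomology of $B$.} Write $B\cong U\rtimes T$ with $U$ unipotent, hence contractible, and $T\cong(\C^*)^r$ a torus (as in the proof of \cref{lem:HomCohom}). Then $H^j(B,\C)\cong\bigwedge^j\C^r$ for $j\in[0,r]$, and as a mixed Hodge structure it is pure of type $(j,j)$, i.e.\ of weight $2j$. This purity is exactly what fails for a general parabolic, which explains the restriction to $P=B$.

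\emph{Step 2: the decomposition of $H^{-k}$.} Substitute $P=B$ into \cref{cor-lcdef}, whose index constraints are $i+j=d_P-k$ with $i\in[0,d_{\hat X}-2]$ and $j\in[0,d_P-d_{\hat X}+1]$.
\begin{itemize}
\item The $H^{d_V}_{\{0\}}\O_V$ summand is the $i=d_{\hat X}-1$ term. It is nonzero only when $j=d_P-k-(d_{\hat X}-1)$ lies in the allowed range, and the lower bound $j\geq 0$ gives the condition $i\geq d_{\hat X}-1-k$.
\item The remaining summands give the direct sum in the statement, indexed by $i\in[0,d_{\hat X}-1]\cap[d_{\hat X}-1-k,\,d_P-k]$.
\end{itemize}
The nonzero spaces $H^j(B,\C)$ appear as multiplicity spaces, and I would record them, or suppress them as the statement does up to multiplicity.

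\emph{Step 3: weights.} By \cref{cor:ThatHodge}, $\hat T(\rho,\hat X,0)$ underlies $\iota_*\pi_*{}^p\Q^H_G\cong \iota_*{}^p\Q^H_Y\boxtimes a_*{}^p\Q^H_P$. Hence on each summand $H^i\iota_+\O_Y\otimes H^{m}(B)$, placed in degree $-k$ with $m=d_P-k-i$, the weight is $w+2m$, where $w$ runs over the weights of $H^i\iota_+\O_Y$. I must track the normalisation between $\O_Y$ and ${}^p\Q^H_Y$ carefully here; this is the same shift already visible in \cref{cor:weightLocCohom}.
\begin{itemize}
\item For $i\geq 1$, \cref{cor:WeightIota}(3) gives $w=d_X+i+2=d_{\hat X}+i+1$. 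This yields the weight $d_{\hat X}+i+1+2(d_P-k-i)=d_{\hat X}+2d_P+1-2k-i$.
\item For $i=0$ there are two weights, $d_{\hat X}+1$ and $d_{\hat X}$. The top graded piece is $\KH$, i.e.\ the cokernel of $H^0\iota_\dag\O_Y\to H^0\iota_+\O_Y$ (\cref{cor:WeightIota}(2)), and it lands in the "$i=0$" slot. The lower piece $\iota_{\dag+}\O_Y$ has weight one less, so it lands in the "$i=1$" slot.
\end{itemize}
Consequently, in the $i=1$ slot the contribution $\iota_{\dag+}\O_Y$ adds to $H^{d_V-d_{\hat X}+1}_{\hat X}\O_V$ exactly when both the $i=0$ and $i=1$ terms are present in degree $-k$; the range condition of Step 2 turns this into the dichotomy between $k>d_{\hat X}-2$ and $k=d_{\hat X}-2$. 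By the definition of $\operatorname{lcdef}$, the local cohomology summands vanish for $i>\operatorname{lcdef}(\hat X)$. This gives the index set $[0,\operatorname{lcdef}(\hat X)]\cup\{d_{\hat X}-1\}$ and the coarser interval bound.

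\emph{Step 4: the summand at the origin and purity.} The punctual term comes from $\iota_{0+}$ of a pure weight. Its weight must be read off from the triangle \eqref{eq:TopSeqWithTate} together with the Tate twist, which should yield $2d_P-2k+1$. This piece of weight bookkeeping, together with the $i=0$/$i=1$ splitting, is where I expect the main obstacle: getting every shift and Tate twist consistent between the $\O_Y$ and ${}^p\Q^H$ normalisations. I would settle it by testing the smallest case, $k=0$ and $X=\P^1$.

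Finally, for $k\in[0,d_{\hat X}-2-\operatorname{lcdef}(\hat X)]$ we have $d_{\hat X}-1-k>\operatorname{lcdef}(\hat X)$. So the range in Step 2 excludes every $i\leq\operatorname{lcdef}(\hat X)$, only the punctual summand survives, and $H^{-k}\hat T(0)$ is pure of weight $2d_P-2k+1$.
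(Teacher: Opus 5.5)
Your route is the paper's route. The paper's proof takes the K\"unneth decomposition $H^{-k}\hat T(0)\cong\bigoplus_i H^i\iota_+\O_Y\otimes H^{d_P-k-i}(B,\C)$ from \cref{theo:ColocFunct}, uses that $H^m(B)$ is pure of weight $2m$, and then invokes \cref{cor:WeightIota}. Your Step 1 is more careful than the paper, which calls $H^m(B)$ one-dimensional. In fact $H^m(B)\cong\bigwedge^m\C^r$ with $r=d_P-d_{\hat X}+1$. So the first display holds only up to these multiplicities, and the summand $H^{d_V}_{\{0\}}\O_V$ occurs only for $k\le r$.

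The genuine gap is Step 4. You assert without computing that the punctual summand ``should'' have weight $2d_P-2k+1$, and this cannot be proved because it is false.
\begin{itemize}
\item \cref{cor:WeightIota}(3) also covers $\ell=d_X=d_{\hat X}-1$. It gives $H^{d_{\hat X}-1}\iota_+\O_Y\cong i_{0,+}H^{2d_X}(X,\C)(-1)$, pure of weight $2d_{\hat X}$; the triangle \eqref{eq:TopSeqWithTate} gives the same, namely $i_{0*}\Q(-d_{\hat X})$.
\item Tensoring with $H^{d_P-k-d_{\hat X}+1}(B)$ gives weight $2d_P-2k+2$. This is also what your own Step 3 formula $d_{\hat X}+2d_P+1-2k-i$ and the statement's refined formula give at $i=d_{\hat X}-1$. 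So the statement is internally inconsistent at this point, and no Tate-twist bookkeeping will produce $+1$.
\item The test you propose exposes this. Take $X=\P^1$ with $\Ell=\O(1)$ or $\O(2)$, so $d_P=2$. For $k=0$ the only summand is $H^{d_V}_{\{0\}}\O_V\otimes H^1(B)$, of weight $4+2=6$, not $5$.
\end{itemize}
So the singleton $\{2d_P-2k+1\}$ and the final purity weight must read $2d_P-2k+2$.

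There is a second gap, in Step 3. You correctly put $\iota_{\dag+}\O_Y\otimes H^{d_P-k}(B)$ into the $i=1$ slot. You then restrict the slots to $[0,\operatorname{lcdef}(\hat X)]\cup\{d_{\hat X}-1\}$, which is inconsistent with that placement. The $\iota_{\dag+}$ piece is nonzero for every $k\in[d_{\hat X}-1,d_P]$, whatever $\operatorname{lcdef}(\hat X)$ is. This matters for Gorenstein hypersurface cones, where $\operatorname{lcdef}=0$. Take the quadric cone over $\P^1\times\P^1\subset\P^3$; here $P$ is a Borel subgroup, $d_{\hat X}=3$ and $d_P=4$. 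For $k=2$ the weights of $H^{-2}\hat T(0)$ are $\{6,7,8\}$. The statement's refined set gives only $\{6,8\}$, and its coarse set gives $\{5,8\}$. The correct slot set is $[0,\max(1,\operatorname{lcdef}(\hat X))]\cup\{d_{\hat X}-1\}$, and the coarse interval must be adjusted to match.

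Finally, the bookkeeping in Step 2 is reversed. The condition $i\ge d_{\hat X}-1-k$ comes from $j\le d_P-d_{\hat X}+1$, while $j\ge 0$ gives $i\le d_P-k$.
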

\begin{proof}
We deduce from the second statement of \cref{theo:ColocFunct} that
$$
H^{-k} \hat{T}(0) = \bigoplus_{i+j=-k} H^i\iota_+\cO_Y \otimes_\dC H^j a_+\cO_P
=
\bigoplus_{i+j=-k} H^i\iota_+\cO_Y \otimes_\dC H^{d_P+j}(P,\dC).
$$
If $P=B$ is a Borel subgroup, then its Levi factor $Q$ is a torus. Therefore, $H^j(B,\dQ)\cong H^j(Q,\dQ)$ is one-dimensional for each $j$ and has a pure Hodge structure of weight $2j$ (see, e.g., \cite[Example 3.9]{Batyrev5}). Now the statement follows from \cref{cor:WeightIota}.
\end{proof}
As an easy consequence, for the case $P=B$, the weight filtration on the cohomology groups of the complex $\hat{T}(0)$ allows us obtain the various local cohomology groups of the cone $\hat{X}$.
\begin{cor}
  In the situation of \cref{cor:WeightComplexThat} (i.e., $P=B$), all local cohomologies $H_{\hat X}^\ell \O_V$ are graded pieces of the weight filtration of some cohomology group of $\hat T(\rho, \hat X, 0)$.
\end{cor}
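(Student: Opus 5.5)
The plan is to read the statement off from the explicit list of graded pieces in \cref{cor:WeightComplexThat}. We only need that every nonzero local cohomology module $H^\ell_{\hat X}\O_V$ occurs there as (a summand of) some $\Gr^W$ of some $H^{-k}\hat T(0)$. By Hartshorne--Lichtenbaum, the second vanishing theorem and \cref{prop:LocCohomCone}, the only possibly nonzero ones are $H^{d_V-d_{\hat X}+i}_{\hat X}\O_V$ for $i\in[0,d_{\hat X}-2]$, together with $H^{d_V}_{\{0\}}\O_V$ (if one wants to include the support-at-origin piece). By definition of $\operatorname{lcdef}$, the modules with $i>\operatorname{lcdef}(\hat X)$ vanish. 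So we must treat $i=0$, $i=1$, $i\in[2,\operatorname{lcdef}(\hat X)]$, and the origin term.

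For $i\in[2,\operatorname{lcdef}(\hat X)]$, \cref{cor:WeightComplexThat} gives $H^{d_V-d_{\hat X}+i}_{\hat X}\O_V$ directly as the graded piece of weight $d_{\hat X}+2d_P+1-2k-i$ of $H^{-k}\hat T(0)$. For this we choose $k$ with $i\in[d_{\hat X}-1-k,\,d_P-k]$, i.e.\ $k\in[d_{\hat X}-1-i,\,d_P-i]$. This interval is nonempty because $d_P=d_G-d_{\hat X}+\dots\ge d_{\hat X}-1$. More precisely, $d_P-d_{\hat X}+1=\dim B-\dim(G/B)+\dots$; I would check $d_P\ge d_{\hat X}-1$ from $\dim B\ge \dim G_0/B=d_X$ for a Borel, using $d_{\hat X}=d_X+1$ and $P\cong B$. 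For $i=1$ take $k=d_{\hat X}-2$, where the piece is exactly $H^{d_V-d_{\hat X}+1}_{\hat X}\O_V$. The origin term $H^{d_V}_{\{0\}}\O_V$ is the case $i=d_{\hat X}-1$, for example with $k=0$.

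The case $i=0$ is the main point, since \cref{cor:WeightComplexThat} only gives the cokernel of $H^0\iota_\dag\O_Y\to H^0\iota_+\O_Y$ in weight $i=0$, not the full module $H^{d_V-d_{\hat X}}_{\hat X}\O_V=H^0\iota_+\O_Y$. By \cref{cor:WeightIota}(2), that cokernel is $i_{0,+}\KH^{d_X}(X,\C)(-1)$, the top graded piece. The lower graded piece $\iota_{\dag+}\O_Y$ appears as a summand of the $i=1$ graded piece for $k>d_{\hat X}-2$. So I would interpret the statement as saying the weight-graded pieces of $H^{d_V-d_{\hat X}}_{\hat X}\O_V$ are recovered, or equivalently that this module is reassembled from $\Gr^W$'s of $H^{-k}\hat T(0)$ for two different $k$. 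I would state this honestly, citing \cref{cor:weightLocCohom} for the two-step weight filtration of the lowest local cohomology.

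The main obstacle is this $i=0$ bookkeeping, namely checking that a $k$ exists with $k>d_{\hat X}-2$ and $1\le d_P-k$, which needs $d_P\ge d_{\hat X}$. If that fails for very small examples, I would treat them by hand.
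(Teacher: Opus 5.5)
Your approach is the paper's own. The paper offers no argument beyond calling the statement an easy consequence of \cref{cor:WeightComplexThat}, and your treatment of $i\in[1,\operatorname{lcdef}(\hat X)]$ and of the origin term is exactly that reading.

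The obstacle you raise at the end does not occur. We have $d_P-d_{\hat X}+1=\dim B_0-\dim G_0/B_0=r$, the rank of $G_0$. Moreover $r\geq 1$, because $\dim X>0$ forces $B_0\neq G_0$.

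Your caveat at $i=0$ is necessary, not just bookkeeping: there the statement is false as written, so no proof can give more than you propose.
\begin{itemize}
\item Weight graded pieces are pure polarizable, so their underlying $\cD_V$-modules are semisimple.
\item By contrast, $H^{d_V-d_{\hat X}}_{\hat X}\O_V\cong H^0\iota_+\O_Y$ is the sheaf-theoretic direct image of a module on $V\setminus\{0\}$, so it has no nonzero submodule supported at $0$.
\item By \cref{cor:WeightIota}, it surjects onto $i_{0,+}\KH^{d_X}(X,\C)(-1)$. Hence it is a non-split extension, and not semisimple, as soon as $\KH^{d_X}(X,\C)\neq 0$.
\end{itemize}
This already happens for $G_0=\mathrm{SL}_2\times\mathrm{SL}_2$, $X=\P^1\times\P^1$, $\Ell=\O(1,1)$. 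There $\hat X$ is the Gorenstein quadric cone in $\C^4$, $P\cong B_0$, and $\KH^2(X,\C)$ is one-dimensional. Moreover $H^1_{\hat X}\O_V$ is the only nonzero local cohomology module, so the corollary fails entirely in this example.

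For the same reason, drop ``equivalently \dots\ reassembled'': the two graded pieces do not determine this extension.

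The sharpest true replacement comes from taking $k=d_P$ in \cref{cor-lcdef}. Then $i+j=0$ forces $i=j=0$, and the origin summand vanishes since $d_{\hat X}\geq 2$. So $H^{-d_P}\hat T(\rho,\hat X,0)\cong H^{d_V-d_{\hat X}}_{\hat X}\O_V$ itself, with weight graded pieces $\iota_{\dag+}\O_Y$ and $i_{0,+}\KH^{d_X}(X,\C)(-1)$. Thus the lowest local cohomology is a whole cohomology group of $\hat T(\rho,\hat X,0)$, and both of its graded pieces occur in that single group.

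One caution when reading off \cref{cor:WeightComplexThat}: $H^j(B,\C)\cong\bigwedge^j\C^r$ is not one-dimensional for $0<j<r$. So for interior $k$ the relevant graded piece is a sum of $\binom{r}{j}$ copies. Use the endpoint $k=d_{\hat X}-1-i$, i.e.\ $j=r$.
\begin{itemize}
\item The summands of $H^{-k}\hat T(\rho,\hat X,0)$ are then $H^{i'}\iota_+\O_Y\otimes H^{r+i-i'}(B,\C)$ with $i'\geq i\geq 1$.
\item These have pairwise distinct weights $d_{\hat X}+1+2r+2i-i'$.
\item So the graded piece of weight $d_{\hat X}+1+2r+i$ is a single Tate-twisted copy of $H^i\iota_+\O_Y$. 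By \cref{prop:LocCohomCone} this is $H^{d_V-d_{\hat X}+i}_{\hat X}\O_V$ for $i\leq d_{\hat X}-2$, and $H^{d_V}_{\{0\}}\O_V$ for $i=d_{\hat X}-1$.
\end{itemize}
Your choices $k=d_{\hat X}-2$ for $i=1$ and $k=0$ for the origin term are of exactly this form.
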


\begin{rem}\label{rem:DuBois}
In a series of papers, Popa and Musta\c{t}\u{a} (see, e.g., \cite{PopaMustata-LocCohom}) have developed the Hodge theory of local cohomology groups of singular subvarieties. In particular, they have shown (see \cite[Proposition 5.5]{PopaMustata-LocCohom}) that if $k:Z \hookrightarrow \widetilde{Z}$ is the embedding of a reduced subvariety $Z$ into a smooth ambient variety $\widetilde{Z}$, then there is an isomorphism
$$
\underline{\Omega}^p_Z \cong
{R\cH\!}om_{\cO_{\widetilde{Z}}}
\left(
\Gr^F_{p-d_{\widetilde{Z}}}\textup{DR}_{\widetilde{Z}} k_! k^!
\mbqH_{\widetilde{Z}}[d_{\widetilde{Z}}], \omega_{\widetilde Z}
\right)[p]
$$
where $\underline{\Omega}^p_Z$ denotes the $p$-th du Bois complex of $Z$.
The complex of $\cD_{\widetilde{Z}}$-modules underlying $k_! k^! \mbqH_{\widetilde{Z}}[d_{\widetilde{Z}}]$ is $R\Gamma_Z \cO_{\widetilde{Z}}$. For the case $\widetilde{Z}=V$, $Z=\hat{X}$, we have seen in \cref{prop:LocCohomCone} that the cohomologies of this complex are given, up to degree $d_V-2$ by the groups $H^i \iota_+ \cO_{\hat{X}\backslash\{0\}}$, and the latter are the building blocks of the complex $\hat{T}(\rho,\hat{X},0)$, according to \cref{theo:ColocFunct}. Therefore, the knowledge of the Hodge filtration on $\hat{T}(\rho,\hat{X},0)$ would give a rather precise description of the $p$-th du Bois complex of the cone $\hat{X}$, which might be of interest for some questions on the underlying homogeneous space $X$. Moreover, the Hodge filtration on $\hat{T}(\rho,\hat{X},0)$ determines that of its dual complex
complex, which is $\hat{T}(\rho, \hat{X},\gamma)$ by the duality results from \cite{GRSSW}. The top cohomology of the latter is nothing but the tautological system $\hat{\tau}(\rho, \hat{X},\gamma)$, for which the knowledge of the Hodge filtration would be very useful for applications in mirror symmetry, along the lines of \cite{RS12, RS20}. We leave a thorough discussion of these matters to a subsequent paper.
\end{rem}

\begin{rem} \label{rem:NonZeroTHat}
  With the arguments in this paper, we also directly see that for $\hat X$ Gorenstein, the only integrable parameters $\beta\colon \mathfrak g \to \C$  leading to non-zero $\hat T(\rho, \hat X, \beta)$ are in fact only the two parameters $0$ and $\gamma$ as in \cref{rem:GorensteinGamma}, the same non-vanishing behaviour as for $H^0 \hat T(\rho, \overline X, \beta)$. This can be argued as follows:
  On one hand, $\hat T(\rho, \hat X, \beta)$ is colocalized for $\beta \neq 0$. On the other hand, by duality \cite[Theorem~4.6]{GS-Duality} for $\hat X$ Gorenstein, we have $\mathbb D\hat T(\rho, \hat X, \beta) \cong \hat T(\rho, \hat X, \gamma-\beta)[-d_P]$, so accordingly, \cref{prop:FunctConstrHatT} also shows that $\hat T(\rho, \hat X, \beta)$ is localized for $\beta \neq \gamma$.
  In particular, if $\beta$ is neither $0$ nor $\gamma$, then $\hat T(\rho, \hat X, \beta)$ is both localized and colocalized. By the description of the restriction to $\hat X \setminus \{0\}$ in \cref{prop:HatTOrbit}, this means
  \begin{equation} \label{eq:locAndColoc}
  \hat T(\rho, \hat X, \beta) \cong \iota_\star \O_{\hat X \setminus 0} \otimes_\C H_{-\bullet}(\mathfrak p, \C_{\gamma-\beta}) \qquad \text{ for both $\star=+$ and $\star=\dag$}.
  \end{equation}
  However, the cohomological amplitude of $\iota_+ \O_{\hat X \setminus \{0\}}$ is $[0,d_{\hat X}-1]$, while that of $\iota_\dag \O_{\hat X \setminus \{0\}}$ is $[d_{\hat X}-1,0]$, and the cohomologies at the endpoints of the intervals are non-zero in each case (by \cref{prop:LocCohomCone}). Since $d_{\hat X} - 1 \geq 1$, two descriptions of $\hat T(\rho, \hat X, \beta)$ in \cref{eq:locAndColoc} force $H_{-\bullet}(\mathfrak p, \C_{\gamma-\beta}) = 0$ (otherwise, one obtains a contradiction by considering the highest non-vanishing cohomology of $\hat T(\rho, \hat X, \beta)$). Hence, $\hat T(\rho, \hat X, \beta) = 0$ for $\beta \notin \{0,\gamma\}$.
\end{rem}

\bibliographystyle{amsalpha}

\begin{thebibliography}{GRSSW26}

\bibitem[Bat93]{Batyrev5}
Victor~V. Batyrev, \emph{Variations of the mixed {H}odge structure of affine
  hypersurfaces in algebraic tori}, Duke Math. J. \textbf{69} (1993), no.~2,
  349--409.

\bibitem[Bat26]{Batavia}
Manav Batavia, \emph{Vanishing of local cohomology in unramified mixed
  characteristic}, 2026,
  \href{https://arxiv.org/abs/2602.22191}{arXiv:math/2602.22191}.

\bibitem[BBD82]{BBD82}
A.~A. Beilinson, J.~Bernstein, and P.~Deligne, \emph{Faisceaux pervers},
  Analysis and topology on singular spaces, {I} ({L}uminy, 1981), Ast\'erisque,
  vol. 100, Soc. Math. France, Paris, 1982, pp.~5--171. \MR{751966}

\bibitem[Bot57]{Bot57}
Raoul Bott, \emph{Homogeneous vector bundles}, Ann. of Math. (2) \textbf{66}
  (1957), 203--248.

\bibitem[CE48]{ChevalleyEilenberg}
Claude Chevalley and Samuel Eilenberg, \emph{Cohomology theory of {L}ie groups
  and {L}ie algebras}, Trans. Amer. Math. Soc. \textbf{63} (1948), 85--124.

\bibitem[Che99]{Che99}
Sophie Chemla, \emph{A duality property for complex {L}ie algebroids}, Math. Z.
  \textbf{232} (1999), no.~2, 367--388.

\bibitem[Dim04]{Dimca}
Alexandru Dimca, \emph{Sheaves in topology}, Universitext, Springer-Verlag,
  Berlin, 2004.

\bibitem[Gai05]{Gaitsgory}
Dennis Gaitsgory, \emph{Geometric representation theory}, Lectures notes,
  Harvard university, available at
  \url{https://people.mpim-bonn.mpg.de/gaitsgde/267y/catO.pdf}, 2005.

\bibitem[GLS25]{SabbahGarciaLopez}
Ricardo Garc\'ia~L\'opez and Claude Sabbah, \emph{Hodge-{L}yubeznik numbers},
  C. R. Math. Acad. Sci. Paris \textbf{363} (2025), 213--221.

\bibitem[GRSSW26]{GRSSW}
Paul Görlach, Thomas Reichelt, Christian Sevenheck, Avi Steiner, and Uli
  Walther, \emph{{Tautological systems, homogeneous spaces and the holonomic
  rank problem}}, {J. \'Ec. Polytech. Math.} \textbf{13} (2026), 519--591.

\bibitem[GS25]{GS-Duality}
Paul Görlach and Christian Sevenheck, \emph{Duality theory of tautological
  systems}, arXiv 2510.01980, 2025.

\bibitem[Hal15]{Hall}
Brian Hall, \emph{Lie groups, {L}ie algebras, and representations}, second ed.,
  Graduate Texts in Mathematics, vol. 222, Springer, Cham, 2015, An elementary
  introduction.

\bibitem[Har68]{Hartshorne-CDAV}
Robin Hartshorne, \emph{Cohomological dimension of algebraic varieties}, Ann.
  of Math. (2) \textbf{88} (1968), 403--450.

\bibitem[HL90]{HunekeLyubeznik}
C.~Huneke and G.~Lyubeznik, \emph{On the vanishing of local cohomology
  modules}, Invent. Math. \textbf{102} (1990), no.~1, 73--93.

\bibitem[HTT08]{Hotta}
Ryoshi Hotta, Kiyoshi Takeuchi, and Toshiyuki Tanisaki,
  \emph{{$\mathcal{D}$}-modules, perverse sheaves, and representation theory},
  Progress in Mathematics, vol. 236, Birkh\"auser Boston Inc., Boston, MA,
  2008, Translated from the 1995 Japanese edition by Takeuchi.

\bibitem[Mil17]{Milne}
J.~S. Milne, \emph{Algebraic groups}, Cambridge Studies in Advanced
  Mathematics, vol. 170, Cambridge University Press, Cambridge, 2017, The
  theory of group schemes of finite type over a field.

\bibitem[MMW05]{MMW}
Laura~Felicia Matusevich, Ezra Miller, and Uli Walther, \emph{Homological
  methods for hypergeometric families}, J. Amer. Math. Soc. \textbf{18} (2005),
  no.~4, 919--941.

\bibitem[MP22]{PopaMustata-LocCohom}
Mircea Musta\c{t}\u{a} and Mihnea Popa, \emph{Hodge filtration on local
  cohomology, {D}u {B}ois complex and local cohomological dimension}, Forum
  Math. Pi \textbf{10} (2022), Paper No. e22, 58.

\bibitem[NMS19]{NarvaezSevenheck}
Luis Narv\'aez~Macarro and Christian Sevenheck, \emph{Tautological systems and
  free divisors}, Adv. Math. \textbf{352} (2019), 372--405.

\bibitem[Ogu73]{Ogus-LCDAV}
Arthur Ogus, \emph{Local cohomological dimension of algebraic varieties}, Ann.
  of Math. (2) \textbf{98} (1973), 327--365.

\bibitem[OV90]{Onishchik-Vinberg}
Arkadij~L. Onishchik and Ernest~B. Vinberg, \emph{Lie groups and algebraic
  groups}, Springer Series in Soviet Mathematics, Springer-Verlag, Berlin,
  1990, Translated from the Russian and with a preface by D. A. Leites.

\bibitem[PS73]{PeskineSzpiro}
C.~Peskine and L.~Szpiro, \emph{Dimension projective finie et cohomologie
  locale. {A}pplications \`a la d\'{e}monstration de conjectures de {M}.
  {A}uslander, {H}. {B}ass et {A}. {G}rothendieck}, Inst. Hautes \'{E}tudes
  Sci. Publ. Math. (1973), no.~42, 47--119.

\bibitem[RS17]{RS12}
Thomas Reichelt and Christian Sevenheck, \emph{Non-affine {L}andau-{G}inzburg
  models and intersection cohomology}, Ann. Sci. \'Ec. Norm. Sup\'er. (4)
  \textbf{50} (2017), no.~3, 665--753 (2017).

\bibitem[RS20]{RS20}
\bysame, \emph{Hypergeometric {H}odge modules}, Algebr. Geom. \textbf{7}
  (2020), no.~3, 263--345.

\bibitem[RSW21]{RSW}
Thomas Reichelt, Morihiko Saito, and Uli Walther, \emph{Dependence of
  {L}yubeznik numbers of cones of projective schemes on projective embeddings},
  Selecta Math. (N.S.) \textbf{27} (2021), no.~1, Paper No. 6, 22.

\bibitem[RW19]{ReichWalth-Duco}
Thomas Reichelt and Uli Walther, \emph{Gau\ss -{M}anin systems of families of
  {L}aurent polynomials and {$A$}-hypergeometric systems}, Comm. Algebra
  \textbf{47} (2019), no.~6, 2503--2524.

\bibitem[Sal24]{SalamonLieGroups}
Dietmar Salamon, \emph{{Notes on complex Lie groups}}, available at
  \url{https://people.math.ethz.ch/~salamon/PREPRINTS/cx-lie.pdf}, 2024.

\bibitem[Ste19]{AviDualProjRestrGKZ}
Avi Steiner, \emph{Dualizing, projecting, and restricting {GKZ} systems}, J.
  Pure Appl. Algebra \textbf{223} (2019), no.~12, 5215--5231.

\bibitem[SW09]{SW09}
Mathias Schulze and Uli Walther, \emph{Hypergeometric {D}-modules and twisted
  {G}au\ss -{M}anin systems}, J. Algebra \textbf{322} (2009), no.~9,
  3392--3409.

\bibitem[vdB99]{vanDenBergh}
Michel van~den Bergh, \emph{Local cohomology of modules of covariants}, Adv.
  Math. \textbf{144} (1999), no.~2, 161--220.

\bibitem[Wei94]{Weibel}
Charles~A. Weibel, \emph{An introduction to homological algebra}, Cambridge
  Studies in Advanced Mathematics, vol.~38, Cambridge University Press,
  Cambridge, 1994.

\bibitem[Zha26]{Zhang-second}
Wenliang Zhang, \emph{The second vanishing theorem for local cohomology
  modules}, Math. Res. Lett. \textbf{32} (2026), no.~6, 2039--2062.

\end{thebibliography}
\providecommand{\bysame}{\leavevmode\hbox to3em{\hrulefill}\thinspace}
\providecommand{\MR}{\relax\ifhmode\unskip\space\fi MR }
\providecommand{\MRhref}[2]{  \href{http://www.ams.org/mathscinet-getitem?mr=#1}{#2}
}
\providecommand{\href}[2]{#2}

\end{document}